\newtheorem{theorem}{Theorem}
\newtheorem{proposition}[theorem]{Proposition}
\newtheorem{lemma}[theorem]{Lemma}
\theoremstyle{definition}
\newtheorem{definition}[theorem]{Definition}
\newtheorem{example}[theorem]{Example}
\newtheorem{remark}[theorem]{Remark}
\numberwithin{theorem}{section}
\providecommand{\leftsquigarrow}{%
  \mathrel{\mathpalette\reflect@squig\relax}%
}
\newcommand{\reflect@squig}[2]{%
  \reflectbox{$\m@th#1\rightsquigarrow$}%
}
\renewcommand{\phi}{\varphi} 
\definecolor{sebgreen1}{rgb}{0.019,0.317,0.149}
\definecolor{sebgreen2}{rgb}{0.784,0.952,0.780}
\definecolor{refred1}{rgb}{1,0,0}
\definecolor{refred2}{rgb}{1,.8,.8}
\newcommand{\mm}{\mathfrak{m}}
\newcommand{\A}{\mathbb{A}}
\newcommand{\N}{\mathbb{N}}
\newcommand{\PP}{\mathbb{P}}
\newcommand{\Z}{\mathbb{Z}}
\newcommand{\Spec}{\mathrm{Spec}}
\newcommand{\Tor}{\mathrm{Tor}}
\renewcommand{\O}{\mathcal{O}}
\newcommand{\res}{\mathrm{Res}}
\newcommand{\ord}{\mathrm{ord}}
\appto\maketitle{%
\let\@makefnmark\relax  \let\@thefnmark\relax
\ifx\@empty\addresses\else\@footnotetext{%
  \vskip-\bigskipamount\@setaddresses}
  }
\def\enddoc@text{}
\shorttitle}
\@nx\MakeUppercase{\the\toks@}}
\authors}
\title{Residues and Gorenstein Contractions of Genus One Curves}
\author{Adrian Neff \\
Appendix by Adrian Neff and Jonathan Wise}
\address{Department of Mathematics, Ohio State University, 434 Math Tower, 231 West 18th Avenue, Columbus, OH 43210-1174}
\email{neff.314@osu.edu}
\begin{document}

\begin{abstract}
    Let $C$ be a genus one nodal curve over a local artinian base and let $E$ be a proper subcurve of genus one. We define residues for curves over local artinian rings, then define generalized residues with values in line bundles over the local artinian ring that arise from tropical data on the curve. We then use these residues to construct a contraction of $C$ that collapses $E$ to a Gorenstein genus one singularity. 
\end{abstract}

\maketitle

\tableofcontents

\section{Introduction}

Given a genus one nodal curve $C$ and a proper subcurve of genus one, $E$, we wish to construct a contraction of $C$ collapsing $E$ to a Gorenstein genus one singularity. Our construction begins with a family of centrally aligned genus one log curves $\pi : C \to S = \Spec(A)$, where $A$ is a local artinian ring. One may think of the central alignment as a semistable modification $\tilde{C} \to C$ that ``orders the components" inside the subcurve $E$. Using this data, we produce a residue for singular curves valued in $A$, then use this to construct the contraction $C \to \widebar{C}$. 

Specifically, let $C \to \Spec(A)$ be a family of curves over a local artinian ring $A$ and let $p \in C_0$ be a closed point in the fiber. We first construct the residue of a meromorphic differential $\phi$ along a branch $X$ at $p$, denoted by $\res_{p,X}(\phi)$, that satisfies the following two properties:
\begin{enumerate}
    \item if $p$ is a node with branches $X$ and $Y$, then 
    \[
    \res_{p,X}(\phi) = - \res_{p,Y}(\phi);
    \]
    \item if $C$ is smooth, then 
    \[
    \sum_{p \in C} \res_{p,C}(\phi) = 0.
    \]
\end{enumerate}
One may think of this as an algebraic analogue of integration over a vanishing cycle in the complex analytic setting. 

After defining our notion of residue, we construct the contraction over $\Spec(A)$ via generalized residues with values in line bundles over $\Spec(A)$ coming from tropical data, denoted $\res^m$, as follows. There are $n$ rational curves $R_1,...,R_n$ connecting the subcurve $E$ to the rest of $C$. Let $p_i$ be the node connecting $R_i$ to $E$. The condition for a function $f$ in a neighborhood of $E$ to descend to the contraction is
\[
\sum_{p_i} \res^m_{p_i,R_i}(f\phi) = 0,
\]
where $\phi$ is a specially chosen differential.

We then replace $S$ with the spectrum of a local noetherian ring $R$ with maximal ideal $\mm$ and assume $C$ is a smoothing. Setting $S_n = \Spec(R/\mm^{n+1})$ and $C_n = C \times_S S_n$, we have a diagram of cartesian squares
\begin{center}
    \begin{tikzcd}
C_0 \arrow[r] \arrow[d] & C_1 \arrow[r] \arrow[d] & C_2 \arrow[d] \arrow[r] & \cdots \\
S_0 \arrow[r]           & S_1 \arrow[r]           & S_2 \arrow[r]           & \cdots
\end{tikzcd}
\end{center}
and a contraction for each $i$, $\tau_i : C_i \to \bar C_i$. We then have the following characterization of the functions in $\O_{\widebar{C}}$. 

\begin{theorem}
    Let $U \subset \bar C_n$ be an open subset, $V = \tau_n^{-1}(U)$, and $f_n \in \O_{C_n}(V)$. The following are equivalent:
    \begin{enumerate}
        \item $f_n$ is in the image of $\O_{C_m}(V) \to \O_{C_n}(V)$ for all $m \geq n$;
        \item there exist compatible lifts $f_m \in \O_{C_m}(V)$ of $f_n$ for all $m \geq n$;
        \item $f_n$ is in $\O_{\bar C_n}(U)$;
        \item $f_n$ satisfies the residue condition: $\sum_{p_i} \res^m_{p_i,R_i}(f_n\phi_n|_V) = 0 \in \O_S(-\delta)$.
    \end{enumerate}
\end{theorem}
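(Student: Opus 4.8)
The plan is to establish $(2)\Rightarrow(1)\Rightarrow(3)\Rightarrow(2)$ and to use, at each fixed level $n$, the equivalence $(3)\Leftrightarrow(4)$, which is precisely the characterization of descent to the contraction obtained earlier in the paper, now applied to $C_n\to S_n=\Spec(R/\mm^{n+1})$ (an artinian base, so the earlier construction applies verbatim). We may assume throughout that $U$ is affine and contains the image $\bar p\in\bar C_n$ of the contracted subcurve $E$: all four conditions are local on $\bar C_n$, and over the locus where $\tau_n$ is an isomorphism --- the complement of $\bar p$ --- the curve $E$ and the nodes $p_i$ are disjoint from $V$, so $(1)$--$(4)$ hold there trivially. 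The implication $(2)\Rightarrow(1)$ is immediate.

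For $(3)\Rightarrow(2)$ I would use that the contractions fit into a tower of compatible closed immersions $\bar C_n\hookrightarrow\bar C_{n+1}\hookrightarrow\cdots$ over $S_n\hookrightarrow S_{n+1}\hookrightarrow\cdots$, compatible with the thickenings $C_n\hookrightarrow C_{n+1}\hookrightarrow\cdots$ via the $\tau_m$ (this is part of the construction of the contraction and I would record it as a preliminary). Since $U$ is affine, each restriction $\O_{\bar C_m}(U)\to\O_{\bar C_n}(U)$ is then surjective, so the inverse system of lifts of $f_n$ inside $\{\O_{\bar C_m}(U)\}_{m\ge n}$ has surjective transition maps; its inverse limit over the countable set $\{n,n+1,\dots\}$ is therefore nonempty, and a compatible system $g_m$ there pulls back along the $\tau_m$ to compatible lifts $f_m\in\O_{C_m}(V)$ of $f_n$, which is $(2)$.

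The heart of the matter is $(1)\Rightarrow(3)$, which by $(3)\Leftrightarrow(4)$ is the assertion: if $f_n$ lifts to $\O_{C_m}(V)$ for every $m\ge n$, then $\sum_{p_i}\res^m_{p_i,R_i}(f_n\phi_n|_V)=0$. The mechanism I would use is infinitesimal obstruction theory together with residue duality. Flatness of $C\to S$ identifies the kernel of $\O_{C_{j+1}}\to\O_{C_j}$ with $\O_{C_0}\otimes_k(\mm^{j+1}/\mm^{j+2})$, so the obstruction to extending a section of $\O_{C_j}$ over $V$ to $\O_{C_{j+1}}$ is a class in $H^1(V,\O_{C_0})\otimes_k(\mm^{j+1}/\mm^{j+2})$. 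By the theorem on formal functions applied to $\tau_0\colon C_0\to\bar C_0$, whose only positive-genus fibre is the genus one curve $E=\tau_0^{-1}(\bar p)$, the sheaf $R^1\tau_{0*}\O_{C_0}$ is a skyscraper at $\bar p$ of length equal to the genus of the singularity, so $H^1(V,\O_{C_0})$ is small and --- this is the crux --- the residue theory of the earlier sections identifies it, compatibly with the $\mm$-adic filtration, with the target of the pairing $f\mapsto\sum_{p_i}\res^m_{p_i,R_i}(f\phi_0)$, the differential $\phi$ having been chosen exactly so that this pairing is the Grothendieck residue pairing for the Gorenstein genus one singularity of $\bar C_0$ at $\bar p$. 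Granting this identification, the content is a descending induction: the obstruction to extending $f_n$ (or a truncation of it) one step vanishes by hypothesis, and one reads off from the dictionary that the successive obstructions along the tower are precisely the successive pieces of $\sum_{p_i}\res^m_{p_i,R_i}(f_n\phi_n|_V)\in\O_S(-\delta)$, so the whole residue vanishes; then $f_n\in\O_{\bar C_n}(U)$ by $(3)\Leftrightarrow(4)$. The same dictionary run in the other direction yields $(4)\Rightarrow(2)$ directly: from a class with vanishing residue one builds the tower of lifts, extending at each stage (the next obstruction being the vanishing next piece of the residue) and, when necessary, correcting the chosen lift by a section of the ideal sheaf so that the residue remains zero one order further --- the correction is possible because $g\mapsto\sum_{p_i}\res^m_{p_i,R_i}(g\phi_j)$ carries $H^0(V,\O_{C_0})\otimes_k(\mm^{j+1}/\mm^{j+2})$ onto $\mm^{j+1}/\mm^{j+2}\otimes\O_S(-\delta)$, nondegeneracy that again comes from the choice of $\phi$ and residue duality on the genus one fibre.

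The step I expect to be the main obstacle is the one flagged as the crux: making the ``residue--obstruction dictionary'' precise --- i.e.\ showing that the connecting maps governing infinitesimal liftings, under the identification of $H^1$ coming from formal functions, are literally the graded pieces of $\sum_{p_i}\res^m_{p_i,R_i}(\,\cdot\,\phi_n)$, and that these graded pieces assemble to the single intrinsic element of the line bundle $\O_S(-\delta)$ appearing in $(4)$ rather than merely to its leading term. This is exactly the point at which the generalized residues $\res^m$ built from the tropical data, the twist by $\delta$, and the special differential $\phi$ all have to be used in concert; once that compatibility is in hand, the flatness and Mittag--Leffler inputs feeding the remaining implications are routine.
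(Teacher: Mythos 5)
Your cycle of implications is valid in outline, but two of the legs have genuine gaps, and both of them are exactly where the work of the theorem lives. The step $(3)\Rightarrow(2)$ is circular: you propose to record as a preliminary that the contractions form a tower of compatible closed immersions $\bar C_n\hookrightarrow\bar C_{n+1}$ with surjective restriction maps $\O_{\bar C_m}(U)\to\O_{\bar C_n}(U)$ over affine $U$, but in the paper both base change ($\bar C_T\simeq\bar C\times_S T$) and flatness of $\bar\pi$ are proved \emph{after} this theorem, and both proofs invoke it, precisely to produce lifts of sections of $\O_{\bar C_n}(U)$ to higher order. The surjectivity you want is, after unwinding, a strengthening of $(3)\Rightarrow(1)$ itself; it cannot be extracted from the construction of $\bar C$ alone without first doing the work of $(4)\Rightarrow(2)$.

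In $(1)\Rightarrow(3)$ you correctly flag the ``residue--obstruction dictionary'' as the crux, but the gap is sharper than you indicate, and it is exactly where the twist by $\O_{S_n}(-\delta)$ enters. The obstruction classes in $H^1(V,\O_{C_0})\otimes(\mm^{j+1}/\mm^{j+2})$ coming from Mittag--Leffler on the core detect vanishing of the \emph{untwisted} total residue valued in $\O_{S}$. Condition $(4)$ demands vanishing in the twist $\O_{S_n}(-\delta)\simeq[t]\O_{S_n}$, and the construction is designed precisely so that $\sum\res^m_{p_i,R_i}(f_n\phi_n)$ can be a nonzero element of $[t]\O_{S_n}$ whose image in $\O_{S_n}$ vanishes because $t$ has annihilators in $R/\mm^{n+1}$. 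To conclude vanishing in the twist from vanishing of the untwisted obstructions at every order, one must lift $f_n$ to a deep enough level $N$ at which the offending coefficient is no longer killed by $t$; that quantitative comparison is Lemma~\ref{res_lift} together with the choice of $N$ in the paper's proof of $(1)\Rightarrow(4)$, and your sketch leaves exactly this step unargued, so ``all graded obstructions vanish, hence the residue in $\O_{S_n}(-\delta)$ vanishes'' does not follow. The paper sidesteps the abstract residue-duality framing entirely and does both $(1)\Rightarrow(4)$ and $(4)\Rightarrow(2)$ by direct local-coordinate computation, using Lemma~\ref{res_lift} for the bookkeeping and Lemma~\ref{node_pass} plus the residue theorem for the vanishing.
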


Here the $\phi_n$ are specially chosen differentials on the $C_n$. 

\subsection{Relation to Other Works and Future Directions}

Contractions of genus one curves have been studied previously, for example, by Smyth in \cite{smyth_mstable}, Ranganathan, Santos--Parker, and Wise in \cite{rspw}, and Bozlee in \cite{boz}. Comparing approaches, the constructions in \cite{smyth_mstable} and \cite{rspw} both rely on putting the curve in a smoothing family and performing the contraction in that family, and therefore struggle to identify the contraction for a fixed curve. The construction in \cite{boz} remedies this by constructing the ring for the contraction by hand, but the viewpoint therein does not make clear the connections between the contraction and other related problems, such as smoothing of differentials. 

Centrally aligned curves were introduced in \cite{rspw} in order to give a modular desingularization of $\widebar{\mathcal{M}}_{1,n}(\PP^r,d)$. By giving a more explicit construction of the contraction used in that paper, it makes clear why factorization through the contraction is related to smoothing stable maps, in terms of smoothing functions pulled back from projective space. We should also be able to use our viewpoint on the contraction to give a novel proof of the splitting of the virtual class of $\widebar{\mathcal{M}}_{1,n}(\PP^r,d)$ over the main and boundary components. 

The technique for the contraction constructed within should also generalize to higher genera by considering residues of functions paired with multiple independent differentials on a curve. Such contractions are vital to constructing modular desingularizations of the moduli spaces of stable maps to projective space, see, e.g., the aforementioned construction in \cite{rspw} and the similar construction of Battistella and Carocci in \cite{BC}. Extending this approach even to just genus two presents interesting challenges and questions, such as the presence of non-reduced structures on the contraction seen in \cite{BC}, coming from degenerating curves with a double cover of $\PP^1$. 

Lastly, by making the connection between the contraction and residues of differentials explicit, we hope to be able to give a new proof of the differential smoothing conjecture (valid in any characteristic) posed by Ranganathan and Wise. This conjecture was stated by Battistella and Bozlee in \cite{BB} and proven in characteristic 0 by Chen and Chen in \cite{CC}. In the process, we should also be able to give an algebraic proof of the global residue condition for smoothing of differentials introduced in \cite{BCGGM}.

\subsection{Acknowledgements}

First, I would like to thank my advisor, Jonathan Wise, for pushing me in the direction of this paper and offering endless insights and guidance. I would also like to thank Luca Battistella, Francesca Carocci, Dawei Chen, and Dhruv Ranganathan for helpful conversations, encouragement, and general interest in the content of this paper. A special thanks to Sebastian Bozlee for providing helpful comments on an earlier draft of this paper and for suggesting a fix for the proof of Lemma \ref{invert}. This research was partially conducted during the time that A.N. was supported by the National Science Foundation under Grant No. DMS-2231565.

\section{Background}

\subsection{Genus One Singularities}

We work over an algebraically closed field $k$ throughout.

Let $C$ be a reduced curve over $k$ and let $\nu : \tilde{C} \to C$ be the normalization at a closed point $p \in C$. Let $\{p_1,...,p_n\}$ be the preimages of $p$ under the map $\nu$. We define two quantities:
\begin{enumerate}
    \item the \textbf{number of branches of the singularity at $p$} is $m(p) = n$; 
    \item the \textbf{$\delta$-invariant} is $\delta(p) = \text{dim}_k(\nu_*\O_{\tilde{C}}/\O_C)$.
\end{enumerate}
We then define the \textbf{genus of the singularity at $p$} to be 
\[
g(p) = \delta(p) - m(p) + 1.
\]
We will be concerned with Gorenstein genus one singularities, which have been characterized by Smyth (\cite{smyth_mstable} Proposition A.3) as follows.

\begin{proposition}
    For each $n \geq 1$, there is a unique Gorenstein genus one singularity with $n$ branches. Specifically, these singularities are the following:
    \begin{enumerate}
        \item $n = 1$: the cusp $V(y^2-x^3)$;
        \item $n = 2$: the tacnode $V(y^2-yx^2)$;
        \item $n \geq 3$: the union of $n$ general lines through the origin in $\A^{n-1}$. 
    \end{enumerate}
\end{proposition}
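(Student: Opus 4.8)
The plan is to follow Smyth's original argument (\cite{smyth_mstable}, Proposition A.3), proving the two claims separately: first that each curve germ listed is Gorenstein of genus one with the stated number of branches, then that these are the only such germs.

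\textbf{Existence (the easy direction).} First I would verify directly that each germ in the list has the advertised invariants. For $n=1,2$ this is a short local computation: for the cusp $R = k[[x,y]]/(y^2-x^3)$, the normalization is $k[[t]]$ via $x \mapsto t^2, y \mapsto t^3$, so $\delta = \dim_k k[[t]]/R = 1$ (the missing element is $t$), giving $g = 1 - 1 + 1 = 1$; for the tacnode one computes $\delta = 2$ with two branches, so $g = 2 - 2 + 1 = 1$. For $n \geq 3$, the union of $n$ general lines through the origin in $\A^{n-1}$ has normalization the disjoint union of $n$ copies of $\A^1$, and one computes $\delta = n - 1$ (the general position hypothesis is exactly what is needed to make the local ring have the right codimension), so $g = (n-1) - n + 1 = 1$. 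Then I would check the Gorenstein condition; the cleanest route is the numerical criterion: a curve singularity is Gorenstein if and only if $\dim_k (\widehat{\O}_C^{\vee}/\O_C) = \delta$ where $\O_C^\vee$ is the dualizing module, equivalently (by Serre duality / Rosenlicht's description of the dualizing sheaf) that the module of regular differentials has colength $\delta$ in $\omega_{\tilde C}(\text{branches})$; for genus one this amounts to exhibiting a single meromorphic differential whose residue condition cuts out $\O_C$ inside $\nu_*\O_{\tilde C}$, which is precisely the kind of residue computation developed later in the paper. Alternatively, for $n\geq 3$ one exhibits the socle of $R/(\ell)$ for a general linear form $\ell$ and checks it is one-dimensional.

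\textbf{Uniqueness (the main obstacle).} This is the harder half. Let $(R,\mm)$ be the complete local ring of a Gorenstein genus one singularity with $m(p) = n$ branches, and let $\tilde R = \prod_{i=1}^n k[[t_i]]$ be its normalization, with conductor ideal $\mathfrak{c} = \mathrm{ann}_R(\tilde R/R)$. The key structural input is that for a Gorenstein singularity the conductor satisfies $\dim_k (\tilde R/\mathfrak c) = \delta$ as well, i.e. $\mathfrak c$ has colength $\delta$ in $\tilde R$ and colength $2\delta$ in... more precisely $\dim_k(R/\mathfrak c) = \delta$, so $\delta$ controls everything. Here $\delta = g + n - 1 = n$. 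The plan is then: analyze the possible ``orders of vanishing'' the subring $R \subset \tilde R$ can have along each branch. On each branch $k[[t_i]]$, the image of $R$ determines a numerical semigroup (or at least a subspace filtration); the conductor tells us $t_i^{a_i} \in \mathfrak c \subset R$ for suitable $a_i$. Counting dimensions with $\delta = n$ forces these to be as small as possible: one shows $a_i = 1$ for $n \geq 3$ (each branch is ``smooth to first order'' after the others are accounted for), $a = 2$ for the tacnode, $a = 3$ for the cusp. The Gorenstein (self-duality) hypothesis is what pins down the off-diagonal interaction terms between branches — it forces the branches into general linear position rather than a special configuration. Concretely, I would argue that $\mm/\mm^2$ has dimension $n-1$ for $n \geq 3$ (so the germ embeds in $\A^{n-1}$), that each branch maps to a distinct line, and that the general-position condition is forced because any coincidence of tangent directions or lying on a common hyperplane would either raise $\delta$ or break the Gorenstein symmetry. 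The cases $n = 1, 2$ are then handled by the classical classification of Gorenstein curve singularities of low $\delta$-invariant (where $\delta = 1$ forces the node or cusp, and $\delta = 1$ with one branch is the cusp since the node has two).

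\textbf{Where the difficulty concentrates.} The genuinely delicate step is the $n \geq 3$ case: proving simultaneously that $\mm/\mm^2$ is $(n-1)$-dimensional, that the $n$ branches have distinct tangent lines spanning $\A^{n-1}$, and that no further degeneracy (like four coplanar lines when $n = 4$) is allowed. This is exactly where one must use that the dualizing module is principal, not merely that $R$ is Cohen--Macaulay — the symmetry of the value semigroup of the branches under $v \mapsto \mathfrak c - v$ is the real engine. I expect to invoke Serre's criterion in the form: $R$ is Gorenstein iff $R = \{x \in \tilde R : \mathrm{tr}(x \cdot \mathfrak c) \subset R\}$ for the trace form, or equivalently via Kunz–Waldi's symmetry result for the value semigroup of a one-dimensional Gorenstein ring, and then a combinatorial argument bounds everything by $\delta = n$. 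For the write-up I would likely cite Smyth's Proposition A.3 directly for the uniqueness and content myself with verifying the existence half in detail, since the existence computation is what the rest of the paper actually uses.
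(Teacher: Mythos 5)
The paper does not prove this proposition at all; it is stated as a known result and attributed directly to Smyth (\cite{smyth_mstable}, Proposition A.3). Your plan to follow Smyth's argument, and in particular to cite him for the uniqueness half, is therefore compatible with what the paper actually does, which is simply to cite.

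That said, your existence computation for $n \geq 3$ contains a concrete error that would sink it as written. You claim $\delta = n - 1$ for the union of $n$ general lines through the origin in $\A^{n-1}$, and then write $g = (n-1) - n + 1 = 1$; but $(n-1) - n + 1 = 0$, not $1$. The correct value is $\delta = n$, which gives $g = n - n + 1 = 1$. You can check this for $n = 3$: three concurrent lines with distinct tangent directions in the plane, say $V(xy(x-y)) \subset \A^2$, is a plane curve singularity of multiplicity $3$ with three distinct tangents and has $\delta = \binom{3}{2} = 3$. The value $\delta = n - 1$ is the $\delta$-invariant of the \emph{seminormal} union of $n$ coordinate axes in $\A^n$ (which has $g = 0$); requiring the lines to lie in $\A^{n-1}$ and span it imposes one extra linear condition among the branches, raising $\delta$ by exactly one to $n$. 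You in fact use the correct $\delta = n$ later in the uniqueness paragraph, so the slip is an internal inconsistency rather than a conceptual misunderstanding, but it needs to be repaired for the existence check to go through.
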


\subsection{Tropical Curves}

We refer the reader to \cite{ccuw} for the details of the following approach to tropical curves. 

Let $M$ be a finitely generated, integral, saturated, sharp monoid. We define a tropical curve with edge lengths in $M$.

\begin{definition}
    An \textbf{$n$-marked tropical curve $\Gamma$ with edge lengths in $M$} is a finite graph with vertex set $V$, edge set $E$, and half-edge set $L$ along with:
    \begin{enumerate}
        \item a bijection $\{1,...,n\} \to L$;
        \item a genus function $g : V \to \N$;
        \item a length function $l : E \to M$.
    \end{enumerate}
\end{definition}

The genus of $\Gamma$ is the sum of the genera of each vertex plus the first Betti number of the underlying graph.

We will need the notion of a piecewise linear function on a tropical curve.

\begin{definition}
    A \textbf{piecewise linear function} $f$ on a tropical curve $\Gamma$ is the assignment of a value $f(v) \in M$ for each $v \in V$ and a slope $m(h) \in \N$ for each $h \in L$ such that if $e \in E$ is an edge between vertices $v,w \in V$, then $f(v) - f(w)$ is an integer multiple of $l(e)$, i.e., $f(v) - f(w) = sl(e)$, with $s \in \Z$.
\end{definition}

\subsection{Log Curves}

We recall the definition of a log scheme here, but we refer the reader to \cite{kato_log_structures} for more details and background on logarithmic geometry.

\begin{definition}
    Let $S$ be a scheme and $M_S$ be a (\'{e}tale) sheaf of monoids on $S$. A \textbf{logarithmic structure} on $S$ is morphism of sheaves of monoids $\epsilon : M_S \to \O_S$, where $\O_S$ is considered as a monoid under multiplication, such that $\epsilon^{-1}(\O_S^*) \to \O_S^*$ is an isomorphism. A \textbf{logarithmic scheme} is a scheme with a logarithmic structure. The sheaf $\widebar{M}_S = M_S/\O_S^*$ is the \textbf{characteristic monoid}. 
\end{definition}

We will not need to work with general log schemes, however, as we will be restricting our attention to log curves.

\begin{definition}
    A \textbf{log curve} over an fs log scheme $S$ is a log smooth, integral, and proper map $\pi : C \to S$ of fs log schemes such that each geometric fiber is a reduced and connected curve. 
\end{definition}

A more useful characterization is the following, due to F. Kato (\cite{fkato_deformations} 1.8). This formulation of the statement is borrowed from \cite{rspw} (Theorem 2.3.1).

\begin{theorem}
    Let $C \to S$ be a log curve and $p \in C$ a geometric point with image $s \in S$. Then there are \'{e}tale neighborhoods $V$ of $p$ and $U$ of $s$ so that $V \to U$ has a strict \`{e}tale map to a model $V' \to U$ with one of the following forms:
    \begin{enumerate}
        \item (smooth germ) $V' = \A^1_{U} \to U$, and the log structure on $V'$ is pulled back from the base;
        \item (germ of a marked point) $V' = \A^1_{U} \to U$, and the log structure on $V'$ is pulled back from the toric log structure on $\A^1$;
        \item (node) $V' = \Spec\, \O_{U}[x,y]/(xy-t)$ for some $t \in \O_{U}$, and the log structure on $V'$ is pulled back from the multiplication map $\A^2 \to \A^1$ of toric varieties along the map $U \to \A^1$ of log schemes induced by $t$.
    \end{enumerate}

    We call the image of $t \in M_S$ in $\widebar{M}_S$ a \textbf{smoothing parameter of the node}.
\end{theorem}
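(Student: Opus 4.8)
The plan is to deduce this from Kato's structure theorem for log smooth morphisms together with a classification of the local characteristic monoid. Since the conclusion is local on $C$ and $S$, I would first pass freely to \'etale neighbourhoods. Because $\pi$ is log smooth, Kato's criterion provides, \'etale-locally around $p$, a chart for $\pi$: a homomorphism of fine saturated monoids $Q\to P$, a strict map $S\to\Spec(\Z[Q])$ with $Q=\ol M_{S,s}$, and a lift $C\to\Spec(\Z[P])$, such that $\ker(Q^{\mathrm{gp}}\to P^{\mathrm{gp}})$ and the torsion part of $\coker(Q^{\mathrm{gp}}\to P^{\mathrm{gp}})$ are finite of order invertible on $S$, and the induced morphism $C\to S\times_{\Spec(\Z[Q])}\Spec(\Z[P])$ is classically smooth. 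One can refine the chart so that this morphism is moreover \'etale (absorbing the smooth fibre directions, along which the log structure is trivial, into the units of $P$) and $P^{\mathrm{gp}}/Q^{\mathrm{gp}}$ is free; call the resulting model $V'$. It is then strict over $C$ by construction, so the whole problem is reduced to listing the monoids $P$, equivalently the relative characteristics $\ol M_{C,p}$ over $\ol M_{S,s}$, that can occur.

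The remaining constraint is that $\pi$ has relative dimension one with reduced connected geometric fibres. Since $V'\to C$ is \'etale, $\operatorname{rk}(P^{\mathrm{gp}}/Q^{\mathrm{gp}})$ equals the relative dimension of $V'\to S$, hence is $1$; so $\ol M_{C,p}$ sits over $\ol M_{S,s}$ with rank-one cokernel on groups. I would then argue that, $\pi$ being a curve with reduced fibres, $\ol M_{C,p}$ must take one of exactly three shapes. If the relative characteristic is trivial, then $P=Q\oplus\Z$ and $V'$ is (\'etale-locally, via the $\Z$-part landing in $\mathbb{G}_m\subset\A^1$) the germ $\A^1_U$ with log structure pulled back from $U$: case (i). If it is $\N$, then $P=Q\oplus\N$ and $V'=\A^1_S$ with the toric log structure on the $\A^1$-factor: case (ii). Otherwise I claim $\ol M_{C,p}\cong\ol M_{S,s}\oplus_{\N}\N^2$, amalgamated along the diagonal $\N\hookrightarrow\N^2$ and a homomorphism $\N\to\ol M_{S,s}$, $1\mapsto\delta$; then $P=Q\oplus_{\N}\N^2$ and $\Z[P]=\Z[Q][x,y]/(xy-q_\delta)$, so $V'\cong\Spec\,\O_U[x,y]/(xy-t)$ for a local lift $t$ of $\delta$, with log structure pulled back from the multiplication $\A^2\to\A^1$ along $U\xrightarrow{t}\A^1$: case (iii). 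Finally $\delta\in\ol M_{S,s}$ is the sum of the two branch generators of $\ol M_{C,p}$, so its image $\ol t$ is intrinsic to $\ol M_{C,p}\to\ol M_{S,s}$ and independent of the chart, which gives the last assertion.

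The step I expect to be the real obstacle is precisely this monoid classification --- ruling out every fine saturated $P$ with $\operatorname{rk}(P^{\mathrm{gp}}/Q^{\mathrm{gp}})=1$ other than $Q\oplus\Z$, $Q\oplus\N$, and the diagonal amalgamated sums $Q\oplus_{\N}\N^2$; concretely, showing that at a non-smooth, non-marked point the two new generators of $\ol M_{C,p}$ must sum to an element of $\ol M_{S,s}$ (being independent would force relative dimension $2$) and that the amalgamation is along the diagonal (a non-diagonal relation such as $xy^2=q_\delta$ would give non-reduced fibres). This is the point where the nodality of log curves actually gets proved, and it uses flatness of $\pi$ together with the reducedness and one-dimensionality of the geometric fibres; everything else --- strictness of $V\to V'$, the refinement making the model morphism \'etale by matching relative dimensions, and pushing the $\mathbb{G}_m$-model of case (i) into $\A^1$ along an open immersion --- is routine bookkeeping. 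A more hands-on alternative would bypass Kato's chart criterion and instead compute $\hat\O_{C,p}$ directly, using the infinitesimal lifting property of log smoothness against strict square-zero extensions to build the normal form order by order; but that essentially re-proves Kato's criterion in this special case, and I would not expect it to be shorter.
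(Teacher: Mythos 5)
The paper does not prove this theorem; it cites it to F.~Kato (Theorem 1.8 of \cite{fkato_deformations}) via the restatement in \cite{rspw}, Theorem 2.3.1. So there is no in-paper proof to compare against, but your proposal can be measured against Kato's actual argument, which it tracks faithfully at the level of strategy: invoke the chart criterion for log smooth morphisms to get, after shrinking \'etale-locally, a chart $Q \to P$ with $Q = \overline{M}_{S,s}$ and a strict \'etale map to the model $S \times_{\Spec\Z[Q]} \Spec\Z[P]$; observe that relative dimension one, after absorbing the free fibre directions into units, forces $\mathrm{rk}(P^{\mathrm{gp}}/Q^{\mathrm{gp}}) = 1$; and then classify the extensions $Q \to P$. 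The three local types you list, $P = Q$, $P = Q \oplus \N$, and $P = Q \oplus_{\N} \N^2$ along the diagonal $\N \hookrightarrow \N^2$ and $1 \mapsto \delta$, are exactly the correct ones, and your remark that $\delta$ is intrinsic (being the sum of the two minimal generators of $\overline{M}_{C,p}$ not coming from $\overline{M}_{S,s}$) correctly justifies the well-definedness of the smoothing parameter.

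The gap is the one you yourself flag: the monoid classification is asserted, not proved, and it is essentially the entire mathematical content of the theorem. What has to be shown is that reducedness and one-dimensionality of the closed fibre of the model chart (concretely, of $\Spec k[P]/(Q^{+})$, where $Q^{+}$ is the maximal ideal of $Q$) rule out every other fs extension with rank-one cokernel on groups: independence of the two new generators would push $\mathrm{rk}(P^{\mathrm{gp}}/Q^{\mathrm{gp}})$ up to $2$, while a non-diagonal amalgamation such as $2\alpha + \beta = \delta$ yields the non-reduced fibre $\Spec k[x,y]/(x^{2}y)$. You name both constraints but do not establish that the list is exhaustive, and that exhaustion, via an analysis of the torus-orbit stratification of $\Spec k[P]$ over $\Spec k[Q]$, is where the work in Kato's proof actually happens. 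As written, your proposal is an accurate roadmap with an acknowledged hole at the only step that is not routine bookkeeping.
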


Using this characterization, we can define the tropical curve $\Gamma$ associated to a log curve $C \to S$ by defining the underlying graph of $\Gamma$ to be the dual graph of $C$, and the length of an edge $e \in E$ is the smoothing parameter $t \in \widebar{M}_S$ associated to the node, i.e., $l(e) = t$. 

We will also need the notion of the line bundle associated to a piecewise linear function, which we now review. Let $f$ be a piecewise linear function on the tropicalization of $C \to S$. It is shown in \cite{ccuw} (Remark 7.3) that one can view this as a section of $\widebar{M}_C$. Then from the short exact sequence
\[
0 \to \O_C^* \to M_C^{\text{gp}} \to \widebar{M}_C^{\text{gp}} \to 0
\]
we get a map $H^0(C, \widebar{M}_C^{\text{gp}}) \to H^1(C, \O_C^*)$. This is the association of a line bundle $\O(-f)$ to a piecewise linear function $f$, where $f$ is viewed as a section of $\widebar{M}_C^{\text{gp}}$. The following proposition from \cite{rspw} (Proposition 2.4.1) will be used throughout.

\begin{proposition}\label{pw}
    Let $\pi : C \to S$ be a log curve such that $\widebar{M}_S$ and $\Gamma$ are constant over $S$. If $f$ is a piecewise linear function on $\Gamma$ and $v$ is a vertex corresponding to an irreducible component $C_v$ of $C$, then we have
    \[
    \O_C(f)|_{C_v} = \O_{C_v}\left(\sum s(f,e_q) q\right) \otimes \pi^*\O_S(f(v)),
    \]
    where $s(f,e_q)$ is the outgoing slope of $f$ along the edge corresponding to $q$. 
\end{proposition}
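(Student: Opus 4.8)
The plan is to compute $\O_C(f)$ directly from its definition --- it is the image of $-f$ under the connecting homomorphism $\delta\colon H^0(C,\widebar M_C^{\text{gp}})\to H^1(C,\O_C^*)$ of the sequence $0\to\O_C^*\to M_C^{\text{gp}}\to\widebar M_C^{\text{gp}}\to 0$ --- and then restrict to $C_v$. Since restriction along the closed immersion $C_v\hookrightarrow C$ is exact on sheaves of abelian groups, that sequence restricts to $0\to\O_{C_v}^*\to M_C^{\text{gp}}|_{C_v}\to\widebar M_C^{\text{gp}}|_{C_v}\to 0$ compatibly with $\delta$, so it suffices to cover $C_v$ by opens on which $f|_{C_v}$ lifts explicitly to $M_C^{\text{gp}}|_{C_v}$, read off the transition functions, and recognize the line bundle they define. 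The hypotheses that $\widebar M_S$ and $\Gamma$ be constant enter here: they guarantee that $f$ is a genuine global piecewise linear function and that $f(v)\in H^0(S,\widebar M_S^{\text{gp}})$, so that $\O_S(f(v))$ makes sense via the same construction on $S$.

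First I would write $f$ as a section of $\widebar M_C^{\text{gp}}$ near the points of $C_v$ that matter, using the local models of F.~Kato's theorem. On the complement $W_0\subset C_v$ of the nodes and marked points of $C_v$, the sheaf $\widebar M_C$ is pulled back from $\widebar M_S$ and $f$ restricts to the constant section $f(v)$. At a node $q$ of $C_v$ meeting a second component $C_w$, take the model $\Spec\,\O_U[x,y]/(xy-t)$ with $C_v=\{y=0\}$, so that $x$ restricts to a local coordinate on $C_v$ vanishing at $q$; there $\widebar M_C$ is generated over $\widebar M_S$ by $\bar x$ and $\bar y$ with $\bar x+\bar y$ equal to the smoothing parameter of the node, which by definition is the edge length $l(e_q)$. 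The defining relation $f(v)-f(w)\in\Z\cdot l(e_q)$ for a piecewise linear function then forces $f=f(v)+s(f,e_q)\,\bar x$ in a neighbourhood of $q$; the expression $f(w)-s(f,e_q)\,\bar y$ obtained symmetrically from the $C_w$-branch agrees with it precisely because $\bar x+\bar y=l(e_q)$. (A marked point of $C_v$ along which $f$ has a slope is treated identically and would contribute an analogous divisorial term; in the applications we may take $f$ to have zero slope at the markings.)

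Now cover $C_v$ by $W_0$ together with a small neighbourhood $W_q$ of each node $q$, chosen disjoint for distinct nodes, and refine $W_0$ along the preimage of an open cover of $S$ over which $f(v)$ lifts to some $\widetilde{f(v)}\in M_S^{\text{gp}}$. Lift $f|_{C_v}$ to $\pi^*\widetilde{f(v)}$ over the pieces of $W_0$ and to $\pi^*\widetilde{f(v)}\cdot x^{s(f,e_q)}$ over $W_q$, using that $x\in M_C$ restricts to a regular function on $W_q$ invertible on $W_0\cap W_q$. Over $W_0$ the transition functions are exactly those of $\pi_v^*\O_S(f(v))$, where $\pi_v\colon C_v\to S$; over $W_0\cap W_q$ the only transition function is a power of $x$ supported at $q$, and distinct nodes do not interact since their charts are disjoint. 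Assembling, $\O_C(f)|_{C_v}$ is the tensor product of $\pi_v^*\O_S(f(v))$ with the line bundle on $C_v$ whose only transition functions are $x^{\pm s(f,e_q)}$ at the nodes, namely $\O_{C_v}\!\left(\sum_q s(f,e_q)\,q\right)$, which is the asserted identity.

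The step I expect to be the main obstacle is the sign bookkeeping hidden in the last two paragraphs: establishing the identity $f=f(v)+s(f,e_q)\bar x$ near a node from the local models, and then reconciling the several sign conventions --- that for the ``outgoing slope'' $s(f,e_q)$, that for the passage $\O_C(-f)=\delta(f)$, and that for the orientation of the \v{C}ech cocycle --- so that the node contribution is $+s(f,e_q)\,q$ rather than its negative. The remainder is formal: the star-shaped cover makes the \v{C}ech computation immediate, and independence of the chosen lifts $\widetilde{f(v)}$ is automatic, since a change of lift alters the cocycle by a coboundary. (One could instead note that both sides of the claimed equality are homomorphisms in $f$ and check it on a spanning set of piecewise linear functions, but producing a convenient spanning set is no easier than the direct computation.)
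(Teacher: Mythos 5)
The paper does not prove this proposition: it is quoted verbatim from \cite{rspw} (Proposition 2.4.1), and the connecting-map definition of $\O_C(f)$ together with its use is simply imported. Your reconstruction --- restricting the sequence $0\to\O_C^*\to M_C^{\text{gp}}\to\widebar M_C^{\text{gp}}\to 0$ to $C_v$, covering $C_v$ by the node-free locus and small neighbourhoods of the nodes using F.~Kato's local models, lifting $f|_{C_v}$ to $M_C^{\text{gp}}|_{C_v}$ chart-by-chart, and reading off the \v{C}ech cocycle --- is exactly the standard proof and is the argument given in \cite{rspw}, so there is nothing substantive to compare. Your flagged worry about sign bookkeeping (outgoing slope vs.\ ingoing, $\O(-f)=\delta(f)$ vs.\ $\O(f)=\delta(-f)$, and the orientation of the cocycle) is the only place care is needed, and the identity $f = f(v) + s(f,e_q)\,\bar x$ near a node is the crux you correctly isolate; the rest is formal as you say.
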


\subsection{Centrally Aligned Curves}

We can now define the notion of a central alignment, as introduced in \cite{rspw} (Definition 4.6.2.1). Let $\pi : C \to S$ be a genus one log curve such that $S = \Spec(k)$ and let $\Gamma$ be the tropicalization of $C$. 

\begin{definition}
    The \textbf{core} of $C$ is the minimal genus one subcurve with respect to inclusion.
\end{definition}

We can now define a piecewise linear function $\lambda$ as follows. For any vertex $v$, there is a unique path from the core to $v$, say $e_1,e_2,...,e_k$. We then set 
\[
\lambda(v) = \sum_{i=1}^k l(e_i) \in \widebar{M}_S.
\]
It is shown in \cite{rspw} (in the paragraphs following Lemma 3.3.2) that this can be extended to have $S$ be an arbitrary base.

Now we can put a partial order on $\widebar{M}_S^{gp}$ by saying $f \leq g$ if $g-f \in \widebar{M}_S$. With this we can make the following definition:

\begin{definition}
    A genus one log curve $\pi : C \to S$ is \textbf{radially aligned} if for every geometric point $s \in S$ and every pair of vertices $v, w$ in $\Gamma_s$, $\lambda(v)$ and $\lambda(w)$ are comparable under the partial order defined above.
\end{definition}

We can now define the related notion of a (stable) central alignment. 

\begin{definition}
    Let $\pi : C \to S$ be a genus one log curve with tropicalization $\Gamma$. A \textbf{(stable) central alignment} of $C$ is the existence of $v$ in $\Gamma$ such that, setting $\delta = \lambda(v)$, the following two conditions hold:
    \begin{enumerate}
        \item the interior of the circle of radius $\delta$ around the core is radially aligned;
        \item the subgraph of $\Gamma$ where $\lambda < \delta$ is a stable curve.
    \end{enumerate}
    A curve with a central alignment is a \textbf{centrally aligned curve}.
\end{definition}

Given a central alignment, there is a semistable modification $\Tilde{C} \to C$ obtained by the following procedure. For every vertex $v$ and edge $e$ between vertices $w$ and $u$ contained in the aligned circle, if $\lambda(w) < \lambda(v) < \lambda(u)$, then we subdivide the edge $e$ to introduce a new vertex a distance of $\lambda(v)$ away from the core. Throughout the remainder of this paper, given a centrally aligned curve $C$, we will implicitly work on $\Tilde{C}$. 

\begin{example}
    Suppose we have two edges $e_1$ and $e_2$ connecting vertices $v_1$ and $v_2$ to the core, with $\lambda(v_1) < \lambda(v_2)$. We then subdivide the edge $e_2$ at a distance of $\lambda(v_1)$ from the core. See Figure \ref{fig:semistable}.

    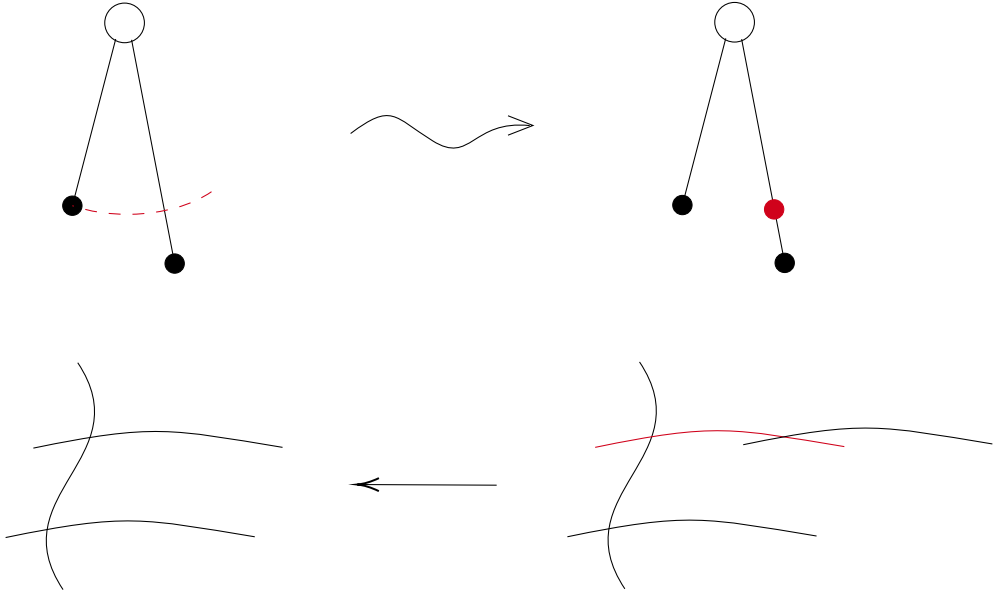
\begin{figure}
        \centering
        \begin{tikzpicture}[x=0.75pt,y=0.75pt,yscale=-1,xscale=1]

\draw   (148.67,29) .. controls (148.67,23.48) and (153.14,19) .. (158.67,19) .. controls (164.19,19) and (168.67,23.48) .. (168.67,29) .. controls (168.67,34.52) and (164.19,39) .. (158.67,39) .. controls (153.14,39) and (148.67,34.52) .. (148.67,29) -- cycle ;
\draw    (154.2,37.14) -- (132.33,121.27) ;
\draw  [fill={rgb, 255:red, 0; green, 0; blue, 0 }  ,fill opacity=1 ] (127.46,121.27) .. controls (127.46,118.58) and (129.64,116.4) .. (132.33,116.4) .. controls (135.02,116.4) and (137.2,118.58) .. (137.2,121.27) .. controls (137.2,123.96) and (135.02,126.14) .. (132.33,126.14) .. controls (129.64,126.14) and (127.46,123.96) .. (127.46,121.27) -- cycle ;
\draw  [fill={rgb, 255:red, 0; green, 0; blue, 0 }  ,fill opacity=1 ] (179.06,150.47) .. controls (179.06,147.78) and (181.24,145.6) .. (183.93,145.6) .. controls (186.62,145.6) and (188.8,147.78) .. (188.8,150.47) .. controls (188.8,153.16) and (186.62,155.34) .. (183.93,155.34) .. controls (181.24,155.34) and (179.06,153.16) .. (179.06,150.47) -- cycle ;
\draw    (162.2,37.54) -- (183.93,150.47) ;
\draw  [draw opacity=0][dash pattern={on 4.5pt off 4.5pt}] (202.51,114.2) .. controls (192.78,120.92) and (178.09,125.33) .. (161.59,125.61) .. controls (150.83,125.78) and (140.78,124.18) .. (132.33,121.27) -- (161.09,95.61) -- cycle ; \draw  [color={rgb, 255:red, 208; green, 2; blue, 27 }  ,draw opacity=1 ][dash pattern={on 4.5pt off 4.5pt}] (202.51,114.2) .. controls (192.78,120.92) and (178.09,125.33) .. (161.59,125.61) .. controls (150.83,125.78) and (140.78,124.18) .. (132.33,121.27) ;  

\draw   (456.33,28.67) .. controls (456.33,23.14) and (460.81,18.67) .. (466.33,18.67) .. controls (471.86,18.67) and (476.33,23.14) .. (476.33,28.67) .. controls (476.33,34.19) and (471.86,38.67) .. (466.33,38.67) .. controls (460.81,38.67) and (456.33,34.19) .. (456.33,28.67) -- cycle ;
\draw    (461.87,36.8) -- (440,120.93) ;
\draw  [fill={rgb, 255:red, 0; green, 0; blue, 0 }  ,fill opacity=1 ] (435.13,120.93) .. controls (435.13,118.24) and (437.31,116.06) .. (440,116.06) .. controls (442.69,116.06) and (444.87,118.24) .. (444.87,120.93) .. controls (444.87,123.62) and (442.69,125.8) .. (440,125.8) .. controls (437.31,125.8) and (435.13,123.62) .. (435.13,120.93) -- cycle ;
\draw  [fill={rgb, 255:red, 0; green, 0; blue, 0 }  ,fill opacity=1 ] (486.73,150.13) .. controls (486.73,147.44) and (488.91,145.26) .. (491.6,145.26) .. controls (494.29,145.26) and (496.47,147.44) .. (496.47,150.13) .. controls (496.47,152.82) and (494.29,155) .. (491.6,155) .. controls (488.91,155) and (486.73,152.82) .. (486.73,150.13) -- cycle ;
\draw    (469.87,37.2) -- (491.6,150.13) ;
\draw  [color={rgb, 255:red, 208; green, 2; blue, 27 }  ,draw opacity=1 ][fill={rgb, 255:red, 208; green, 2; blue, 27 }  ,fill opacity=1 ] (481.4,123.13) .. controls (481.4,120.44) and (483.58,118.26) .. (486.27,118.26) .. controls (488.96,118.26) and (491.14,120.44) .. (491.14,123.13) .. controls (491.14,125.82) and (488.96,128) .. (486.27,128) .. controls (483.58,128) and (481.4,125.82) .. (481.4,123.13) -- cycle ;
\draw    (272.67,84.83) .. controls (293.61,69.12) and (294.33,76.17) .. (314,88.5) .. controls (333.67,100.83) and (333.67,77.83) .. (363,80.83) ;
\draw   (352.05,75.87) -- (364.65,80.77) -- (352.05,85.67) ;

\draw    (135.05,200.52) .. controls (167.04,248.51) and (95.66,267.04) .. (127.64,315.04) ;
\draw    (112.67,243.62) .. controls (173.75,231) and (182.31,234.06) .. (238.33,243.24) ;
\draw    (98.67,288.76) .. controls (159.75,276.14) and (168.31,279.2) .. (224.33,288.38) ;
\draw    (418.38,200.13) .. controls (450.37,248.13) and (378.99,266.66) .. (410.98,314.65) ;
\draw [color={rgb, 255:red, 208; green, 2; blue, 27 }  ,draw opacity=1 ]   (396.01,243.24) .. controls (457.09,230.62) and (465.64,233.68) .. (521.67,242.86) ;
\draw    (382,288.37) .. controls (443.08,275.75) and (451.64,278.81) .. (507.66,287.99) ;
\draw    (470.67,241.91) .. controls (531.75,229.28) and (540.31,232.34) .. (596.33,241.52) ;
\draw    (346.33,262.33) -- (278,262.01) ;
\draw [shift={(276,262)}, rotate = 0.27] [color={rgb, 255:red, 0; green, 0; blue, 0 }  ][line width=0.75]    (10.93,-3.29) .. controls (6.95,-1.4) and (3.31,-0.3) .. (0,0) .. controls (3.31,0.3) and (6.95,1.4) .. (10.93,3.29)   ;

\end{tikzpicture}

        \caption{An example of the semistable modification. Here, one vertex is added to subdivide an edge, which corresponds to adding a rational component to the curve.}
        \label{fig:semistable}
    \end{figure}
\end{example}

\section{Residues}

Let $A$ be a local artinian ring with maximal ideal $\mm$ and residue field $k$, $\pi : C \to S = \Spec(A)$ a flat family of curves with reduced fiber, and $p$ a point in the fiber $C_0 = C \times_{\Spec(A)} \Spec(A/\mm)$. Given a minimal prime ideal $q_0 \subset \hat\O_{C_0,p}$, let $q \subset \hat\O_{C,p}$ be the preimage of $q_0$ under the map $\hat\O_{C,p} \to \hat\O_{C_0,p}$.

\begin{proposition}\label{local_ring}
    If $q_0$ is a minimal prime ideal of $\hat\O_{C_0,p}$, then there is an isomorphism $(\hat\O_{C_0,p})_{q_0} \simeq k(\!(x)\!)$. 
\end{proposition}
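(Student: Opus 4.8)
The plan is to reduce the claim to a computation involving the normalization of the branch of $C_0$ at $p$ determined by $q_0$, exploiting that passing to the completion turns the function field of a branch into a Laurent series field.

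First I would record that $\hat{\mathcal O}_{C_0,p}$ is reduced: the ring $\mathcal O_{C_0,p}$ is reduced because $C_0$ is a reduced scheme, and it is excellent since it is essentially of finite type over the field $k$, so reducedness passes to the completion. Since $q_0$ is a minimal prime, $(\hat{\mathcal O}_{C_0,p})_{q_0}$ is therefore a reduced Noetherian local ring of Krull dimension $0$, hence Artinian, hence a field. A field is its own residue field, and the residue field of the localization at $q_0$ is $\operatorname{Frac}(\hat{\mathcal O}_{C_0,p}/q_0)$; so it remains only to identify this fraction field with $k(\!(x)\!)$.

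To do so, set $B = \hat{\mathcal O}_{C_0,p}/q_0$, a complete local Noetherian domain whose residue field is $k$ (as $p$ is a closed point of a scheme over the algebraically closed field $k$) and whose dimension is $1$ (since $q_0$ corresponds to a branch of $C_0$ at $p$, hence to an irreducible component of the curve $C_0$ through $p$). Let $\tilde B$ be the integral closure of $B$ in $\operatorname{Frac}(B)$. Since $B$ is excellent, being a quotient of an excellent ring, $\tilde B$ is module-finite over $B$; being finite over the complete local ring $B$ and a domain, $\tilde B$ is itself a complete local Noetherian domain, and being one-dimensional and normal it is a complete DVR. Its residue field is a finite extension of $k$, hence equals $k$; so the Cohen structure theorem in the equicharacteristic case gives $\tilde B \cong k[\![x]\!]$. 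As $\tilde B \subseteq \operatorname{Frac}(B)$ by construction, we get $\operatorname{Frac}(B) = \operatorname{Frac}(\tilde B) = k(\!(x)\!)$, and combining with the previous paragraph yields $(\hat{\mathcal O}_{C_0,p})_{q_0} \cong k(\!(x)\!)$.

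The only delicate inputs are the two appeals to excellence --- reducedness of the completion and finiteness of the normalization --- together with the fact that $k$ is algebraically closed, used to trivialize the residue field extension of $\tilde B$ over $B$; the rest is formal. If one wished to sidestep excellence entirely, both facts could instead be read off from an explicit local-analytic model of the reduced curve singularity, but the excellence argument is shorter and uniform.
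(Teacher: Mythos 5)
Your proof is correct, and it takes a genuinely different route from the paper's. The paper works with $R = \mathcal O_{C_0,p}$ before completing: it forms the normalization $R'$ of $R$ in its total ring of fractions, identifies the maximal ideals of $R'$ with the minimal primes of $\hat R$, completes $R'$ to get a product of complete local rings, and then invokes (EGA IV 6.14.4 together with excellence) the fact that normalization commutes with completion to identify each factor with the normalization of $\hat R / q_i$; Cohen's structure theorem then gives $k[\![x]\!]$ for each factor. You instead complete first, observe the completion is reduced (again excellence), note that localizing a reduced ring at a minimal prime gives a field equal to $\operatorname{Frac}(\hat{\mathcal O}_{C_0,p}/q_0)$, and normalize the single complete local domain $B = \hat{\mathcal O}_{C_0,p}/q_0$ directly; Cohen gives $\tilde B \simeq k[\![x]\!]$, and you read off the fraction field. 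Your route is shorter: it avoids the product decomposition of the semi-local completion and the appeal to commutativity of normalization with completion, replacing them with the single observation that a complete local Noetherian domain has module-finite normalization (which you justify via excellence of $B$; one could equally cite directly that complete local Noetherian rings are excellent, or Nagata's finiteness theorem). The paper's more elaborate setup with $R'$ does carry some value in context, since the global normalization $\tilde C_0$ and its branches reappear in the proof of the subsequent lemma on $\omega_C$; but as a proof of this proposition in isolation, your argument is cleaner.
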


\begin{proof}
    Denote $\hat\O_{C_0,p}$ and $\O_{C_0,p}$ by $\hat{R}$ and $R$, respectively, throughout.
    
    Let $\{q_1,...,q_n\}$ be the minimal prime ideals of $\hat{R}$. If we denote by $R'$ the integral closure of $R$ in $Q(R)$ (its total ring of fractions), then the number of maximal ideals of $R'$ is $n$ (\cite{stacks-project} Tags 0C37 and 0C2E). Denote these by $\{m_1,...,m_n\}$. Let $\hat{R'}$ be the completion of $R'$ at the ideal $I = m_1\cdots m_n$, which is equivalent to completing $R'$ at the maximal ideal $m$ of $R$. Since $R'$ is semi-local, we have an isomorphism (\cite{Matsumura} Thm. 8.15)
    \[
    \hat{R'} \simeq \hat{R'}_{m_1} \times \cdots \times \hat{R'}_{m_n},
    \]
    where $\hat{R'}_{m_i}$ is the completion of the local ring $R'_{m_i}$ at its maximal ideal. 

    Now since $R'$ is finite over $R$, we also have an isomorphism
    \[
    \hat{R'} \simeq \hat{R} \otimes_R R'.
    \]
    $R$ is a localization of a finitely generated algebra over a field, so it is an excellent ring. In particular, it is a G-ring, so it has normal formal fibers, and from (\cite{egaiv} Prop. 6.14.4) we know that $\hat{R} \otimes_R R'$ is the integral closure of $\hat{R}$ in $Q(\hat{R})$. Thus, we can write
    \[
    \hat{R'} \simeq \hat{R}_1 \times \cdots \times \hat{R}_n,
    \]
    where $\hat{R}_i$ is the integral closure of $\hat{R}/q_i$ in $Q(\hat{R}/q_i) \simeq \hat{R}_{q_i}$ (\cite{stacks-project} Tag 030C). Therefore, up to reordering, we can assume $\hat{R}_i \simeq \hat{R'}_{m_i}$. In particular, $\hat{R}_i$ is a regular, complete, local noetherian ring of dimension one over $k$, so by the Cohen Structure Theorem (\cite{stacks-project} Tag 0C0S) $\hat{R}_i \simeq k[\![x]\!]$. Thus $\hat{R}_{q_i} \simeq Q(\hat{R}/q_i) \simeq k(\!(x)\!)$ for each $i$. 
\end{proof}
    
\begin{proposition}\label{stalk}
    If $q \subset \hat\O_{C,p}$ is the preimage of a minimal prime ideal $q_0 \subset \hat\O_{C_0,p}$, then $(\hat\O_{C})_q$ is isomorphic to $A(\!(x)\!)$.
\end{proposition}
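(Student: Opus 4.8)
The plan is to bootstrap from Proposition~\ref{local_ring} using that $\pi$ is flat and $\mm$ is nilpotent. Write $\hat R = \hat\O_{C,p}$ and $\hat R_0 = \hat\O_{C_0,p}$, so that $\hat R/\mm\hat R = \hat R_0$ and, by construction, $q \supseteq \mm\hat R$ with $q/\mm\hat R = q_0$. First I would record that $\hat R_q$ is flat over $A$: flatness of the family makes $\O_{C,p}$ flat over $A$, $\hat R$ is flat over the Noetherian ring $\O_{C,p}$ as its completion, and localization is flat. Second, since localization is exact it commutes with the quotient by $\mm$, so
\[
\hat R_q \otimes_A k \;=\; \hat R_q/\mm\hat R_q \;=\; (\hat R_0)_{q_0} \;\cong\; k(\!(x)\!)
\]
by Proposition~\ref{local_ring}; write $L := k(\!(x)\!)$. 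In particular $\mm\hat R_q$ is a nilpotent ideal of $\hat R_q$ with quotient the field $L$ (so in fact $\hat R_q$ is local with nilpotent maximal ideal $\mm\hat R_q$, flat over $A$, with residue field $L$).

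Next I would produce a coefficient field, i.e. a $k$-algebra section $\sigma\colon L \to \hat R_q$ of the projection $\hat R_q \twoheadrightarrow \hat R_q/\mm\hat R_q = L$. Since $k$ is algebraically closed it is perfect, so every field extension of $k$ — in particular $L = k(\!(x)\!)$ — is separable, hence formally smooth over $k$ (see \cite{Matsumura}). Applying the lifting property of formal smoothness to the surjection $\hat R_q \to L$, whose kernel $\mm\hat R_q$ is nilpotent, and lifting $\id_L$, produces the section $\sigma$.

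Now I would combine $\sigma$ with the structure morphism $A \to \hat R_q$: both are $k$-algebra maps, so together they give a $k$-algebra homomorphism $\phi\colon A\otimes_k L \to \hat R_q$. Because $A$ is finite dimensional over $k$ we have $A\otimes_k k[\![x]\!] = A[\![x]\!]$, and inverting $x$ identifies $A\otimes_k L = A\otimes_k k(\!(x)\!)$ with $A(\!(x)\!)$; thus it suffices to prove $\phi$ is an isomorphism. Reducing modulo $\mm$, the source becomes $k\otimes_k L = L$, the target becomes $L$, and $\phi \bmod \mm = \id_L$ since $\sigma$ is a section. Both $A\otimes_k L$ and $\hat R_q$ are flat over $A$ and $\mm$ is nilpotent, so the standard Nakayama argument applies: $\coker(\phi)\otimes_A k = 0$ forces $\coker(\phi)=0$, and then $\Tor_1^A(\hat R_q,k) = 0$ (flatness of $\hat R_q$) makes $\ker(\phi)\otimes_A k = 0$, hence $\ker(\phi)=0$. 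Therefore $\phi$ is an isomorphism and $(\hat\O_C)_q \cong A(\!(x)\!)$.

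The step I expect to be the main obstacle is the construction of $\sigma$: one needs the residue extension $k(\!(x)\!)/k$ to split off the nilpotent thickening $\hat R_q$, which is exactly where perfectness of $k$ enters, through ``separable $\Rightarrow$ formally smooth''. Everything else — the flatness of $\hat R_q$, the reduction modulo $\mm$ identifying $\hat R_q\otimes_A k$ via Proposition~\ref{local_ring}, and the final Nakayama comparison — is formal. If one preferred to avoid invoking formal smoothness of a non–finitely-generated field extension, one could instead build $\sigma$ by hand, lifting a transcendence basis of $L/k$ and then a primitive element of the remaining separable algebraic extension.
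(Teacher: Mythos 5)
Your proof is correct, and it takes a genuinely different route from the paper's. The paper constructs a map $A(\!(x)\!) \to (\hat\O_C)_q$ directly by lifting the uniformizer $x$ to a unit $y$ in $(\hat\O_C)_q$, and then proves it is an isomorphism by an explicit induction on the length of $A$ over $k$, comparing kernels of the successive square-zero extensions and applying the five lemma at each stage. You instead obtain the map by producing a $k$-algebra coefficient field $\sigma\colon k(\!(x)\!)\to \hat R_q$ via formal smoothness of $k(\!(x)\!)/k$ (using that $k$ is perfect and $\mm\hat R_q$ is nilpotent), combining it with the structure map $A\to\hat R_q$, and using the canonical identification $A\otimes_k k(\!(x)\!)\cong A(\!(x)\!)$; you then prove the map is an isomorphism in one stroke by the Nakayama-for-nilpotent-ideals argument together with flatness of $\hat R_q$ over $A$. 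Your construction has the advantage of being unambiguously well-defined as a ring homomorphism, whereas the paper's phrase ``define $A(\!(x)\!)\to(\hat\O_C)_q$ by sending $x$ to $y$'' sweeps a convergence issue under the rug since $(\hat\O_C)_q$ is a localization, not a complete ring; the cost is invoking the theorem that separable field extensions over a perfect field are formally smooth, which is heavier machinery than the elementary deformation-by-deformation lifting in the paper. Both proofs ultimately rest on flatness and nilpotence of $\mm$, and your Nakayama step plays exactly the role of the paper's induction.
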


\begin{proof}
    From Prop. \ref{local_ring}, we know that $(\hat\O_{C_0,p})_{q_0} \simeq k(\!(x)\!)$. For ease of notation, we denote $\hat\O_{C_0,p}$ and $\hat\O_{C,p}$ by $\hat\O_{C_0}$ and $\hat\O_{C}$, respectively, throughout.
    
    We now have two compatible infinitesimal deformations of $(\hat\O_{C_0})_{q_0}$ fitting in the following diagram

\begin{center}
    \begin{tikzcd}
{k(\!(x)\!) = (\hat\O_{C_0})_{q_0}} & {(\hat\O_{C})_{q}} \arrow[l] \\
A(\!(x)\!) \arrow[u]         & A \arrow[u] \arrow[l]         
\end{tikzcd}
\end{center}
We also have a compatible map $A(\!(x)\!) \to (\hat\O_{C})_q$ constructed as follows. The map $\hat{\O}_C \to \hat{\O}_{C_0}$ is surjective, so $(\hat{\O}_C)_q \to (\hat{\O}_{C_0})_{q_0} \simeq k(\!(x)\!)$ is also surjective. Let $y \in (\hat{\O}_C)_q$ be a preimage of $x$ under this map, which will be a unit since this is a local homomorphism. We then define $A(\!(x)\!) \to (\hat{\O}_C)_q$ by sending $x$ to $y$. 

We now show that the map just constructed $A(\!(x)\!) \to (\hat{\O}_C)_q$ is an isomorphism. Since $A$ is an order $N$ extension of $k$, $A(\!(x)\!)$ and $(\hat\O_{C})_q$ are both order $N$ extensions of $k(\!(x)\!)$. We show the desired isomorphism via induction on the order of extension $n$ by factoring these extensions into extensions of lower orders. We set $B_n$ and $D_n$ to be the order $n$ extensions of $k(\!(x)\!)$ coming from $A(\!(x)\!)$ and $(\hat\O_{C})_q$, respectively, $A_n$ to be the corresponding extension of $k$, and assume we have a map $B_n \to D_n$ over $A_n$ as constructed in the previous paragraph.

For the case $n = 1$, we have $B_1 = D_1 = k(\!(x)\!)$. Now assume the statement holds for $n-1$. We note that if $I$ is the kernel of $A_n \to A_{n-1}$, then by flatness of the deformations over $A_n$ and the base case, we know that the kernel of $B_n \to B_{n-1}$, denoted $J$, is $J = I \otimes_{A_n} B_n = I \otimes_{A_1} B_1$. Similarly, the kernel of $D_n \to D_{n-1}$ is $I \otimes_{A_1} D_1 \simeq I \otimes_{A_1} B_1 = J$. We then have the following diagram of exact sequences

\begin{center}
    \begin{tikzcd}
0 \arrow[r] & J \arrow[r]           & B_n \arrow[r]           & B_{n-1} \arrow[r]           & 0 \\
0 \arrow[r] & J \arrow[r] \arrow[u] & D_n \arrow[r] \arrow[u] & D_{n-1} \arrow[r] \arrow[u] & 0
\end{tikzcd}
\end{center}
The left vertical map is an isomorphism, as discussed above, and the right vertical map is an isomorphism via the induction hypothesis. Therefore, the middle map is also an isomorphism, which concludes the induction.

Hence we conclude that $(\hat\O_{C})_q \simeq A(\!(x)\!)$.
\end{proof}

Now let $p \in C$ be a point, $q \subset \hat\O_{C,p}$ a prime as above (corresponding to a branch, $X$, at the point $p$ in the fiber $C_0$), and $\phi \in \omega_C(U)$ for some open set $U$ containing $p$. 

\begin{lemma}\label{differential}
    Assume that $C$ is Gorenstein. $(\hat\omega_{C,p})_q$ is isomorphic to $A(\!(x)\!)\,dx$.
\end{lemma}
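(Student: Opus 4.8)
The plan is to reduce to F.~Kato's local models for the curve, compute the relative dualizing sheaf $\omega_C$ in each model, and then localize at $q$ via Proposition~\ref{stalk}. Since completion at $p$ and the subsequent localization at $q$ are insensitive to strict \'etale morphisms, I would replace an \'etale neighborhood of $p$ by one of the three standard models: the smooth germ $\A^1_U$, the germ of a marked point $\A^1_U$ with its toric log structure, or the node $\Spec\O_U[x,y]/(xy-t)$. In each case $\omega_C$ is invertible near $p$, with an explicit generator $\eta$: one has $\eta = dx$ in the smooth germ, the logarithmic differential $\eta = dx/x$ at a marked point, and the Poincar\'e residue $\eta = dx/x = -\,dy/y$ at a node. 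Hence $\hat\omega_{C,p}$ is free of rank one over $\hat\O_{C,p}$, generated by the image of $\eta$.

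Next I would localize at $q$. The prime $q$ picks out a single branch $X$ at $p$; in the nodal model one may arrange $X = V(y)$, so that $x$ restricts to a coordinate along $X$ and $y$ to the nilpotent $t x^{-1}$. By Proposition~\ref{stalk}, $(\hat\O_C)_q \cong A(\!(x)\!)$, and the coordinate $x$ of the model is a lift of a uniformizer of $(\hat\O_{C_0,p})_{q_0}\cong k(\!(x)\!)$; thus it may be taken to be the unit denoted $y$ in the proof of Proposition~\ref{stalk}, and ``$dx$'' in the statement is its differential. Localizing the free rank-one module $\hat\omega_{C,p}$ at $q$ then gives
\[
(\hat\omega_{C,p})_q \;=\; (\hat\O_C)_q\cdot\eta \;=\; A(\!(x)\!)\cdot\eta .
\]
In the smooth case $\eta=dx$ and there is nothing further to check; in the marked-point and nodal cases $\eta = dx/x = x^{-1}\,dx$, and since $x$ is a unit in $A(\!(x)\!)$ this $A(\!(x)\!)$-module is precisely $A(\!(x)\!)\,dx$. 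So in every case $(\hat\omega_{C,p})_q \cong A(\!(x)\!)\,dx$.

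The content of the argument is entirely in the (standard) local description of $\omega_C$ by the logarithmic Poincar\'e residue; I do not expect a serious obstacle. The points that do need care are bookkeeping: that $\omega_C$ is compatible with the completion $\O_{C,p}\to\hat\O_{C,p}$ (immediate, since $\omega_C$ is a line bundle near $p$) and with the strict \'etale map to the model, and that the model coordinate can indeed be chosen to be the unit $y$ produced in the proof of Proposition~\ref{stalk}. If one wanted to drop the assumption that $C\to S$ is a log curve and allow an arbitrary reduced fiber, the extra thing to verify would be that $\omega_C$ localized at $q$ is still free of rank one — automatic in the nodal (hence Gorenstein) setting here, but needing more care in general.
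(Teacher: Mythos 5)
Your proposal is correct, but it takes a genuinely different route than the paper. The paper avoids explicit local models altogether: it passes to the normalization $\tilde{X}$ of the branch in the special fiber, extends it to a smooth family $\tilde{X}'$ over $A$, identifies $(\O_{C,p})_q$ with $(\O_{\tilde{X}',\tilde{p}})_q$ (both being, after localization at the generic point of the branch, insensitive to removing finitely many points), and then concludes from the smooth case that the dualizing sheaf is generated by the differential of a uniformizer. That argument needs only that $C\to\Spec(A)$ be flat with reduced, Gorenstein fiber --- no log structure enters.

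Your argument instead reduces to F.~Kato's local structure theorem for log curves, writes down the local generator $\eta$ of $\omega_C$ in each model, and observes that in every case $\eta$ differs from $dx$ by a unit of $A(\!(x)\!)$. This is more concrete, and in the nodal case it makes explicit what the paper's deformation argument produces abstractly. The trade-off is scope: Lemma~\ref{differential} lives in Section~3, whose standing hypotheses are only that $C\to\Spec(A)$ is a flat family with reduced fiber and make no reference to a log structure, so your appeal to Kato's models assumes strictly more than the lemma's stated hypotheses. You flag this yourself at the end, and nothing breaks downstream since the lemma is applied only in the nodal (Lemma~\ref{invert}) and smooth (Lemma~\ref{res_thm}) settings; still, as a proof of the lemma \emph{as stated} it is narrower than the paper's. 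One minor correction: for the non-log relative dualizing sheaf --- which is what Lemma~\ref{invert}'s appeal to Rosenlicht duality actually uses --- the local generator at a marked point is $dx$, not $dx/x$; the latter generates $\omega^{\log}$. This has no effect on your conclusion, since $x$ is a unit in $A(\!(x)\!)$, but the distinction is worth keeping straight.
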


\begin{proof}
    Since $\omega_C$ is a line bundle on $C$, the stalk at $p$ will be one-dimensional over the stalk of $\O_C$. The localization of the completion will still be one-dimensional over the localization of the completion of the stalk of $\O_C$, which we know is isomorphic to $A(\!(x)\!)$ by Lemma \ref{stalk}. Therefore, we need only show that $dx$ is a basis element.

    Let $\tilde{C_0}$ be the normalization of $C_0$, let $\tilde{X}$ be the component of $\tilde{C_0}$ corresponding to the branch $X$ of $C_0$, and let $\tilde{p} \in \tilde{X}$ be the point mapping to $p \in C_0$. Let $\tilde{X}'$ be a deformation of $\tilde{X}$ over $A$. If we let $C^{\circ} \subset C$ be the open subset where we have removed $p$ and all components other than $X$, then, since a proper curve minus a finite number of points is affine, and infinitesimal deformations of affines are trivial (see, e.g., \cite{hartdef} Cor. 4.8), we see that $C^{\circ}$ is the same as $\tilde{X}' \setminus \{\tilde{p}\}$. Thus we deduce that $(\O_{C,p})_q$ is isomorphic to $(\O_{\tilde{X}',\tilde{p}})_q$. Now since $\tilde{X}'$ is a smooth curve, we know that $\omega_{\tilde{X}',\tilde{p}}$ is one dimensional over $\O_{\tilde{X}',\tilde{p}}$ and generated by $d$ of a uniformizer of $\O_{\tilde{X}',\tilde{p}}$. This is still true after localization and completion. Now we have a map $(\hat\omega_{\tilde{X}',\tilde{p}})_q \to (\hat\omega_{C,p})_q$, and the generator of the former will map to $dx$, from which we conclude that $dx$ is a basis element. 
\end{proof}

Now from Lemma \ref{differential}, a differential $\phi \in \omega_C(U)$ will have $(\phi_p)|_X \in A(\!(x)\!) \, dx$. Let us define a map $A(\!(x)\!) \, dx \to A$ by sending $\sum a_ix^i \mapsto a_{-1}$. 

\begin{definition}\label{residue}
    The \textbf{residue of $\phi$ at $p$ along the branch $X$} is the image of $(\phi_p)|_X$ under the map $A(\!(x)\!) \, dx \to A$. We denote this by $\res_{p,X}(\phi)$. 

    This induces a map $\pi_*\omega_C \to \O_S$, which we call the \textbf{residue map}.
\end{definition}

We must now show that this is well-defined, i.e., that the residue of an element of $A(\!(x)\!)\, dx$ is invariant under any continuous automorphism of $A(\!(x)\!)$. This follows from \cite{LaurentHoms} Corollary 5.4. This statement is more powerful than needed, as it deals with Laurent series in $n$ variables. In order to keep this self-contained, we provide a proof in Appendix \ref{inv} of the case $n = 1$ that we are considering.

\subsection{Facts about Residues}

\begin{remark}
    We note that, by definition, if we have an expansion
\begin{align}\label{res_coordinates}
    (\phi_p)|_X = \sum a_ix^i \, dx \in A(\!(x)\!) \, dx,
\end{align}
then the residue is just the term $a_{-1}$ in \eqref{res_coordinates}. We will use this fact heavily throughout.
\end{remark}

\begin{remark}
    When $p$ is a smooth point, there is only a single branch and we recover the usual definition of residue on a smooth curve over a local artinian ring (see, e.g., Chapter VII of \cite{hartRD}).
\end{remark}

As mentioned in the Introduction, one may think of this residue over a local artinian ring as an algebraic analogue of integration around a vanishing cycle in the complex analytic setting. The following lemma lends credence to that idea.

\begin{lemma}\label{invert}
    If $p \in C$ is a node with branches $X$ and $Y$, then 
    \[
    \res_{p,X}(\phi) = - \res_{p,Y}(\phi).
    \]
\end{lemma}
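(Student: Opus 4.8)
The plan is to reduce to an explicit computation in the completed local ring at the node, the only real input being the standard (signed) description of the relative dualizing sheaf near a node. First I would use F.~Kato's structure theorem (the Theorem quoted above), together with the fact that $A$ is artinian local hence complete, to identify $\hat\O_{C,p} \cong A[\![x,y]\!]/(xy-t)$, where $t \in A$ is the image of a smoothing parameter of the node; since $C_0$ is reduced with a node at $p$, $t$ lies in $\mm$ and is in particular nilpotent (if $t$ were a unit the model would be smooth). The two branches $X$ and $Y$ correspond to the two minimal primes of the special fiber, and the localized-completed stalks of Proposition~\ref{stalk} are recovered via the $A$-algebra maps $\hat\O_{C,p} \to A(\!(x)\!)$, $y \mapsto tx^{-1}$ and $\hat\O_{C,p} \to A(\!(y)\!)$, $x \mapsto ty^{-1}$ (well defined because $t$ is nilpotent). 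By the well-definedness of the residue (Appendix~\ref{inv}, or \cite{LaurentHoms}) I am free to compute in these coordinates.

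The key geometric input is the local description of $\omega_C = \omega_{C/S}$ near a node: $\hat\omega_{C,p}$ is free of rank one over $\hat\O_{C,p}$, generated by a Rosenlicht differential $\eta$ which restricts to $\tfrac{dx}{x}$ along branch $X$ and to $-\tfrac{dy}{y}$ along branch $Y$ (see, e.g., \cite{hartRD}; this is compatible with base change, so holds over $A$). The opposite sign here is precisely the content of the lemma, so it is genuinely this fact that does the work. I would also note that the statement does not depend on the choice of generator: replacing $\eta$ by $u\eta$ for a unit $u \in \hat\O_{C,p}^\times$ multiplies both branch-restrictions by the value $u(p) \in A^\times$, which preserves the relation to be proved.

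Given this, write $\phi = g\eta$ with $g \in \hat\O_{C,p}$. Using $xy = t$ and the nilpotence of $t$ to reduce mixed monomials, $g$ has a (unique) expansion $g = \sum_{i \geq 0} a_i x^i + \sum_{j \geq 1} b_j y^j$ with $a_i, b_j \in A$ (uniqueness follows by applying the two maps above and comparing coefficients). Restricting along branch $X$ via $y \mapsto tx^{-1}$ gives $(\phi_p)|_X = \big(\sum_{i\geq 0} a_i x^{i-1} + \sum_{j \geq 1} b_j t^j x^{-j-1}\big)\,dx$, whose $x^{-1}$-coefficient is $a_0$; restricting along branch $Y$ via $x \mapsto ty^{-1}$ and using $\eta|_Y = -\tfrac{dy}{y}$ gives $(\phi_p)|_Y = -\big(\sum_{i\geq 0} a_i t^i y^{-i-1} + \sum_{j\geq 1} b_j y^{j-1}\big)\,dy$, whose $y^{-1}$-coefficient is $-a_0$. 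Hence $\res_{p,X}(\phi) = a_0 = -\res_{p,Y}(\phi)$, as claimed.

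I expect the main obstacle to be the second paragraph: pinning down the precise signed local form of the generator $\eta$ of $\hat\omega_{C,p}$ and checking it is compatible with the identifications in Proposition~\ref{stalk} and Lemma~\ref{differential} (so that the coordinate $x$ in $A(\!(x)\!)\,dx$ may be taken to be the node coordinate). Everything else is a bookkeeping computation. If one prefers not to cite the description of $\omega$, an alternative is to construct $\eta$ by hand as the Grothendieck residue symbol of the complete intersection presentation $A[\![x,y]\!] \twoheadrightarrow \hat\O_{C,p}$ and extract its branch-restrictions via the fundamental local isomorphism; the sign then comes out directly from $d(xy-t) = x\,dy + y\,dx$.
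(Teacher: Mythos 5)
Your proof is correct and takes a genuinely different, more uniform route than the paper's. The paper splits into cases: when $t \neq 0$ it does a direct coordinate-change computation $x = ty^{-1}$, $dx = -t\,dy/y^2$ (reading off the $y^{-1}$ term), and only in the case $t = 0$ does it invoke the Rosenlicht description of $\omega_{C/S}$ near the node as the $\hat\O_{C,p}$-module generated by $\bigl(\tfrac{dx}{x},-\tfrac{dy}{y}\bigr)$, propagated from $k$ to $A$ by Nakayama. You instead use the Rosenlicht generator for all $t$ at once, write $\phi = g\eta$ with $g = \sum_{i\geq 0} a_i x^i + \sum_{j\geq 1} b_j y^j$ (the canonical form modulo $xy = t$), and extract the residue along each branch via the explicit localization maps $y \mapsto t x^{-1}$, $x \mapsto t y^{-1}$. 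This buys you a single clean computation and avoids what is arguably a small gap in the paper's $t \neq 0$ case: the formal substitution $x = ty^{-1}$ with $t$ nilpotent only makes sense term-by-term because the negative-degree coefficients of $\phi_p|_X$ are forced to be divisible by appropriate powers of $t$ — a constraint coming precisely from the structure of $\hat\omega_{C,p}$ that you put front and center. The cost is that you lean on the signed local description of $\omega_C$ in every case, rather than only where strictly necessary, but since the paper needs it anyway for $t = 0$ (via Altman--Kleiman plus Nakayama) this is not really an extra assumption. Your identification of the $t = 0$ case (the $b_j$ contributions vanish on $X$ since $y \mapsto 0$, and likewise for the $a_i$ on $Y$) is consistent with the general formula, and the sign comparison $\res_{p,X}(\phi) = a_0 = -\res_{p,Y}(\phi)$ checks out.
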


\begin{proof}
    Assume we have local coordinates $x$ and $y$ for $X$ and $Y$, respectively, such that $xy = t$ for some $t \in \mm$. We first write out
    \[
    \phi_p|_X = \left(\sum_{j=N}^{\infty} a_jx^j\right)dx,
    \]
    so $\res_{p,X}(\phi) = a_{-1}$. We now split into the cases $t = 0$ and $t \neq 0$.

    First assume $xy = t$, where $t \neq 0$. When we restrict to the complement of $X$, $y$ will be invertible and we can write $x = ty^{-1}$, and $dx = -t\frac{dy}{y^2}$. The $-1$ term for the residue in this expansion on $Y$ will come from $a_{-1}x^{-1} = a_{-1}(ty^{-1})^{-1}$, due to the $1/y^2$ in the $dx$ conversion. Multiplying $-t\frac{dy}{y^2}$ by this term shows 
    \[
    \res_{p,Y}(\phi) = -a_{-1} = -\res_{p,X}(\phi).
    \]

    Now assume $t = 0$. Over $\Spec(k)$, we have that 
    \[
    \omega_{C/k} \subset \omega_{X \setminus p / k} \times \omega_{Y \setminus p / k} \simeq \Omega_{X \setminus p / k} \times \Omega_{Y \setminus p / k}
    \]
    and we can identify this with the multiples of $\left(\frac{dx}{x},-\frac{dy}{y}\right)$ (see, e.g., Rosenlicht duality in Altman--Kleinman \cite{altman-kleinman} Prop. VIII.1.16). Over an artinian base $S$ we still have
    \[
    \omega_{C/S} \subset \omega_{X \setminus p / S} \times \omega_{Y \setminus p / S} \simeq \Omega_{X \setminus p / S} \times \Omega_{Y \setminus p / S}
    \]
    and it will still be generated by one element. Using Nakayama's Lemma, one shows that $\left(\frac{dx}{x},-\frac{dy}{y}\right)$ still generates, since it generates over $k$. Thus, since every element of $\omega_{C/S}$ is a multiple of $\left(\frac{dx}{x},-\frac{dy}{y}\right)$ and no element of $\O_C$ will have different constant value on $X$ and $Y$ (as they agree at the node), we see that the residues of $\phi$ along $X$ and $Y$ still differ by a sign.
\end{proof}

\begin{lemma}\label{res_thm}
    If $C \to S$ is smooth, then 
    \[
    \sum_{p \in C} \res_{p,C}(\phi) = 0.
    \]
\end{lemma}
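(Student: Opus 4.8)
The plan is to deduce the lemma from the classical residue theorem over the algebraically closed field $k$ by induction on the length of $A$. First I would make the statement precise: a meromorphic differential $\phi$ is a section of $\omega_{C/S}$ over a dense open $U \subseteq C$; since $\dim C = 1$ the complement $C \setminus U$ is a finite set of closed points $p_1,\dots,p_r$, and at any point where $\phi$ is regular its residue is $0$ (there are no negative-degree terms in the local expansion), so the sum in the lemma is finite and we may assume $r \geq 1$. Since $\pi$ is smooth and $S$ is artinian local with residue field $k = \bar k$, each $p_i$ is the image of a section $\sigma_i \colon S \to C$ by the infinitesimal lifting criterion, so $D := \sum_i N\,\sigma_i(S)$ is an effective Cartier divisor; for $N \gg 0$ one has $\phi \in H^0(C, \omega_{C/S}(D))$ and $R^1\pi_*(\omega_{C/S}(D)) = 0$ (the relative degree exceeds $2g-2$ on every fibre). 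Hence $M := H^0(C, \omega_{C/S}(D))$ is a finite free $A$-module whose formation commutes with base change, and the lemma amounts to showing that the $A$-linear map $T \colon M \to A$, $\psi \mapsto \sum_{i=1}^{r}\res_{p_i}(\psi)$, vanishes; it suffices to prove $T = 0$.

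Before the induction I would record that residues are compatible with base change along a quotient $A \to A' := A/J$: writing $C' = C \times_S \Spec A'$, the isomorphisms $(\hat\O_{C,p})_q \cong A(\!(x)\!)$ and $(\hat\omega_{C,p})_q \cong A(\!(x)\!)\,dx$ of Lemmas \ref{stalk} and \ref{differential} are obtained by completion and localization and therefore base change to the corresponding isomorphisms over $A'$; so if $\phi_p|_X = \sum a_i x^i\,dx$ then the restriction of $\phi$ to $C'$ has local expansion $\sum \bar a_i x^i\,dx$, whence $\res_{p',X'}(\phi|_{C'}) = \overline{\res_{p,X}(\phi)}$. In particular $T \otimes_A A'$ is the residue-sum map for $C'/\Spec A'$.

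The base case of the induction is $A = k$, where the statement is the classical residue theorem for the smooth proper curve $C_0$ over $k$ (see, e.g., Chapter VII of \cite{hartRD}), using also that the Laurent-coefficient definition of $\res$ agrees with the usual one at smooth points. For the inductive step, pick a minimal nonzero ideal $I \subseteq A$; then $I \cong k$ as an $A$-module and $\mm I = 0$. With $A' = A/I$, the inductive hypothesis together with the previous paragraph gives $T \otimes_A A' = 0$, so $T$ takes values in $I = \ker(A \to A')$. But any $A$-linear map $M \to I$ with $\mm I = 0$ kills $\mm M$, hence factors through $M \otimes_A k$ and coincides with its reduction $T \otimes_A k \colon M \otimes_A k \to I$; and $T \otimes_A k$ is the residue-sum map for $C_0/k$, which is $0$ by the base case. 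Therefore $T = 0$, so $\sum_{p \in C}\res_{p,C}(\phi) = T(\phi) = 0$.

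The only step with real content is the one just described: the induction supplies for free that $T$ lands in the minimal ideal $I$, while the relation $\mm I = 0$ is exactly what forbids any gap between an $A$-linear map to $I$ and its reduction modulo $\mm$, so the classical statement over $k$ propagates up the length filtration. Everything else is routine: lifting the poles to sections, the vanishing $R^1\pi_*(\omega_{C/S}(D)) = 0$ for $D$ large, and the base-change compatibility of the local expansions. An alternative is to choose a finite flat map $C \to \PP^1_S$, lifted from the special fibre, and combine compatibility of residues with the trace of differentials with an explicit partial-fraction computation on $\PP^1_S$ — legitimate over $A$ since monic polynomials over the henselian ring $A$ split into linear factors — but the trace--residue compatibility over a non-reduced base is itself most easily established by the same devissage, so I would prefer the induction.
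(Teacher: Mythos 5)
Your approach diverges completely from the paper's, which disposes of this lemma with a one-line citation to Chapter~VII of Hartshorne's \emph{Residues and Duality} (Theorem 1.5 and the following remark). A self-contained devissage over the length of $A$ would be a genuinely nice alternative if it worked, but the inductive step has a gap that the reasoning as written cannot close.

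The problem is in the sentence asserting that the factored map ``coincides with its reduction $T \otimes_A k$, ... which is the residue-sum map for $C_0/k$, which is $0$.'' Two different maps are being called $T \otimes_A k$ here. Tensoring $T\colon M \to A$ with $k$ gives the base-changed map $M\otimes_A k \to A\otimes_A k = k$; by the compatibility you established this \emph{is} the residue sum $T_0$ for $C_0$ and vanishes by the base case, but all that tells you is $T(M) \subseteq \mm$, which is strictly weaker than $T(M) \subseteq I$, which you already knew from the inductive hypothesis. On the other hand, tensoring $T\colon M \to I$ with $k$ gives the map $\bar T \colon M\otimes_A k \to I \otimes_A k = I$ through which $T$ factors; this map has target $I$, not $k$, and there is no canonical way to compare it with $T_0$. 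Indeed $T_0$ factors as $M \to I \hookrightarrow A \twoheadrightarrow k$, and the composite $I \hookrightarrow A \twoheadrightarrow k$ is zero because $I \subseteq \mm$; so $T_0 = 0$ is automatic once $T(M)\subseteq I$ and imposes no constraint at all on $\bar T$. Put differently, $\operatorname{Hom}_A(M, I) \cong I^{\operatorname{rk} M}$ is far from zero, every element of it has vanishing reduction $T \otimes_A k\colon M\otimes_A k \to k$, and yet most are nonzero; your argument does not distinguish the residue map from the other elements of this space. What $\bar T$ detects is the first-order variation of the residue pairing along the deformation $C_{n} \leftsquigarrow C_{n+1}$, which is genuinely new information not supplied by the classical residue theorem over $k$, so the devissage does not reduce the statement to the base case — it only reduces it to the analogous vanishing at each infinitesimal level, which is the same difficulty. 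Your alternative via a finite flat cover of $\PP^1_S$, trace compatibility, and partial fractions is a sound route, but you dismiss the trace–residue compatibility over a non-reduced base as ``most easily established by the same devissage,'' so that route inherits the same gap unless the trace compatibility is proved independently. Either carry out the $\PP^1$-plus-trace argument honestly, or fall back on the citation to Hartshorne as the paper does.
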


\begin{proof}
    See Theorem 1.5 and the remark following it in Chapter VII of \cite{hartRD}.
\end{proof}

\section{The Contraction}

\subsection{Construction}

Let $\pi : C \to S$ be a family of centrally aligned curves with $n$ rational curves emerging from the aligned circle. We will construct a contraction $\tau : C \to \widebar{C}$ that is an isomorphism outside of the aligned circle of $C$ and such that the interior of the aligned circle is contracted to a Gorenstein genus 1 singularity with $n$ branches in $\widebar{C}$. 

We first construct the contraction when $S$ is the spectrum of a local artinian ring $A$ with maximal ideal $\mm$ and residue field $k$. Set $C_0$ to be the scheme $C \times_S \Spec(A/\mm)$. Assume that the central alignment on $C_0$ splits it into $m$ layers $L_0,...,L_m$, where $L_0$ is the core, $L_i$ is the subcurve consisting of the union of the rational curves at level $i$, and $L_m$ is the union of the rational curves on the edge of the aligned circle. Since $C_0$ is centrally aligned, a smoothing parameter at any node connecting layer $L_i$ to $L_{i-1}$ will be the same, say equal to $t_i \in \mm$. We denote by $t$ the product of these smoothing parameters, $t = t_1 \cdots t_m$. See Figure \ref{fig:layers1}.

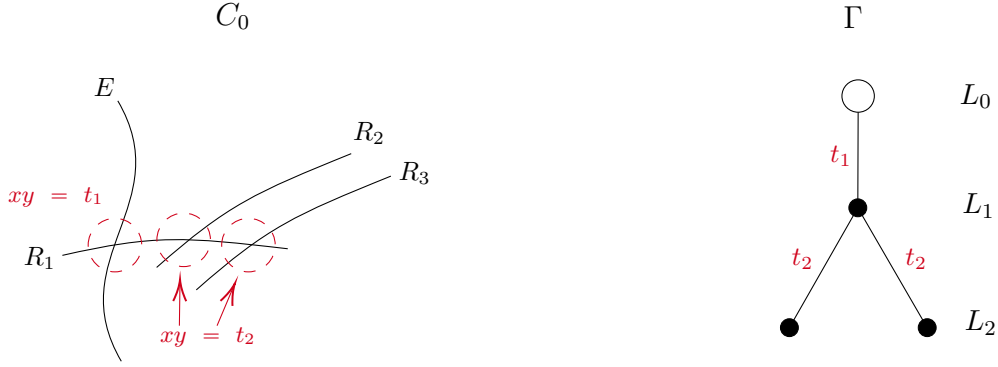
\begin{figure}
    \centering
\begin{tikzpicture}[x=0.75pt,y=0.75pt,yscale=-1,xscale=1]

\draw [color={rgb, 255:red, 0; green, 0; blue, 0 }  ,draw opacity=1 ]   (114.62,92.01) .. controls (145.17,144.18) and (85.83,171.27) .. (116.38,223.45) ;
\draw [color={rgb, 255:red, 0; green, 0; blue, 0 }  ,draw opacity=1 ]   (86.67,170) .. controls (133.67,159.33) and (148.33,160.67) .. (200.33,166.67) ;
\draw [color={rgb, 255:red, 0; green, 0; blue, 0 }  ,draw opacity=1 ]   (134.2,176.1) .. controls (170.33,144.2) and (183.84,138.34) .. (232.34,118.66) ;
\draw [color={rgb, 255:red, 0; green, 0; blue, 0 }  ,draw opacity=1 ]   (154.2,187.44) .. controls (190.33,155.54) and (203.84,149.67) .. (252.34,130) ;
\draw   (479.83,89.58) .. controls (479.83,85.07) and (483.49,81.42) .. (488,81.42) .. controls (492.51,81.42) and (496.17,85.07) .. (496.17,89.58) .. controls (496.17,94.09) and (492.51,97.75) .. (488,97.75) .. controls (483.49,97.75) and (479.83,94.09) .. (479.83,89.58) -- cycle ;
\draw  [fill={rgb, 255:red, 0; green, 0; blue, 0 }  ,fill opacity=1 ] (483.33,146) .. controls (483.33,143.51) and (485.35,141.5) .. (487.83,141.5) .. controls (490.32,141.5) and (492.33,143.51) .. (492.33,146) .. controls (492.33,148.49) and (490.32,150.5) .. (487.83,150.5) .. controls (485.35,150.5) and (483.33,148.49) .. (483.33,146) -- cycle ;
\draw  [fill={rgb, 255:red, 0; green, 0; blue, 0 }  ,fill opacity=1 ] (518.33,206.5) .. controls (518.33,204.01) and (520.35,202) .. (522.83,202) .. controls (525.32,202) and (527.33,204.01) .. (527.33,206.5) .. controls (527.33,208.99) and (525.32,211) .. (522.83,211) .. controls (520.35,211) and (518.33,208.99) .. (518.33,206.5) -- cycle ;
\draw  [fill={rgb, 255:red, 0; green, 0; blue, 0 }  ,fill opacity=1 ] (448.83,206.5) .. controls (448.83,204.01) and (450.85,202) .. (453.33,202) .. controls (455.82,202) and (457.83,204.01) .. (457.83,206.5) .. controls (457.83,208.99) and (455.82,211) .. (453.33,211) .. controls (450.85,211) and (448.83,208.99) .. (448.83,206.5) -- cycle ;
\draw    (488,97.75) -- (487.83,146) ;
\draw    (487.83,146) -- (453.33,206.5) ;
\draw    (487.83,146) -- (522.83,206.5) ;
\draw  [color={rgb, 255:red, 208; green, 2; blue, 27 }  ,draw opacity=1 ][dash pattern={on 4.5pt off 4.5pt}] (167,164.83) .. controls (167,157.38) and (173.04,151.33) .. (180.5,151.33) .. controls (187.96,151.33) and (194,157.38) .. (194,164.83) .. controls (194,172.29) and (187.96,178.33) .. (180.5,178.33) .. controls (173.04,178.33) and (167,172.29) .. (167,164.83) -- cycle ;
\draw  [color={rgb, 255:red, 208; green, 2; blue, 27 }  ,draw opacity=1 ][dash pattern={on 4.5pt off 4.5pt}] (99.67,164.83) .. controls (99.67,157.38) and (105.71,151.33) .. (113.17,151.33) .. controls (120.62,151.33) and (126.67,157.38) .. (126.67,164.83) .. controls (126.67,172.29) and (120.62,178.33) .. (113.17,178.33) .. controls (105.71,178.33) and (99.67,172.29) .. (99.67,164.83) -- cycle ;
\draw  [color={rgb, 255:red, 208; green, 2; blue, 27 }  ,draw opacity=1 ][dash pattern={on 4.5pt off 4.5pt}] (134.33,162.17) .. controls (134.33,154.71) and (140.38,148.67) .. (147.83,148.67) .. controls (155.29,148.67) and (161.33,154.71) .. (161.33,162.17) .. controls (161.33,169.62) and (155.29,175.67) .. (147.83,175.67) .. controls (140.38,175.67) and (134.33,169.62) .. (134.33,162.17) -- cycle ;
\draw [color={rgb, 255:red, 208; green, 2; blue, 27 }  ,draw opacity=1 ]   (145.67,205.5) -- (145.97,184.17) ;
\draw [shift={(146,182.17)}, rotate = 90.82] [color={rgb, 255:red, 208; green, 2; blue, 27 }  ,draw opacity=1 ][line width=0.75]    (10.93,-3.29) .. controls (6.95,-1.4) and (3.31,-0.3) .. (0,0) .. controls (3.31,0.3) and (6.95,1.4) .. (10.93,3.29)   ;
\draw [color={rgb, 255:red, 208; green, 2; blue, 27 }  ,draw opacity=1 ]   (164.67,205.5) -- (172.89,186.01) ;
\draw [shift={(173.67,184.17)}, rotate = 112.87] [color={rgb, 255:red, 208; green, 2; blue, 27 }  ,draw opacity=1 ][line width=0.75]    (10.93,-3.29) .. controls (6.95,-1.4) and (3.31,-0.3) .. (0,0) .. controls (3.31,0.3) and (6.95,1.4) .. (10.93,3.29)   ;

\draw (162.67,41.33) node [anchor=north west][inner sep=0.75pt]   [align=left] {$\displaystyle C_{0}$};
\draw (479.33,43) node [anchor=north west][inner sep=0.75pt]   [align=left] {$\displaystyle \Gamma $};
\draw (538,82.33) node [anchor=north west][inner sep=0.75pt]  [font=\small] [align=left] {$\displaystyle L_{0}$};
\draw (540.5,196.67) node [anchor=north west][inner sep=0.75pt]  [font=\small] [align=left] {$\displaystyle L_{2}$};
\draw (539.33,137.5) node [anchor=north west][inner sep=0.75pt]  [font=\small] [align=left] {$\displaystyle L_{1}$};
\draw (101.33,79) node [anchor=north west][inner sep=0.75pt]  [font=\footnotesize] [align=left] {$\displaystyle E$};
\draw (66,164.67) node [anchor=north west][inner sep=0.75pt]  [font=\footnotesize] [align=left] {$\displaystyle R_{1}$};
\draw (255.01,121.66) node [anchor=north west][inner sep=0.75pt]  [font=\footnotesize] [align=left] {$\displaystyle R_{3}$};
\draw (232,101.33) node [anchor=north west][inner sep=0.75pt]  [font=\footnotesize] [align=left] {$\displaystyle R_{2}$};
\draw (472.67,113.33) node [anchor=north west][inner sep=0.75pt]  [font=\footnotesize,color={rgb, 255:red, 208; green, 2; blue, 27 }  ,opacity=1 ] [align=left] {$\displaystyle t_{1}$};
\draw (510.67,165.33) node [anchor=north west][inner sep=0.75pt]  [font=\footnotesize,color={rgb, 255:red, 208; green, 2; blue, 27 }  ,opacity=1 ] [align=left] {$\displaystyle t_{2}$};
\draw (453.33,165.33) node [anchor=north west][inner sep=0.75pt]  [font=\footnotesize,color={rgb, 255:red, 208; green, 2; blue, 27 }  ,opacity=1 ] [align=left] {$\displaystyle t_{2}$};
\draw (58,134.33) node [anchor=north west][inner sep=0.75pt]  [font=\scriptsize,color={rgb, 255:red, 208; green, 2; blue, 27 }  ,opacity=1 ] [align=left] {$\displaystyle xy\ =\ t_{1}$};
\draw (134.67,205) node [anchor=north west][inner sep=0.75pt]  [font=\scriptsize,color={rgb, 255:red, 208; green, 2; blue, 27 }  ,opacity=1 ] [align=left] {$\displaystyle xy\ =\ t_{2}$};

\end{tikzpicture}

    \caption{A curve with the associated layers coming from the tropicalization $\Gamma$. Here $L_0$ consists of $E$, $L_1$ consists of $R_1$, and $L_2$ consists of $R_2$ and $R_3$. The smoothing parameters of the nodes connecting the layers are labeled on the associated edges in $\Gamma$. }
    \label{fig:layers1}
\end{figure}

We now define a refinement of residues that take values in twists of $\O_S$ by values of $-\lambda$. Let $R_1,...,R_k$ be the rational curves in layer $L_i$, and let $P_1,...,P_k$ be the nodes connecting $L_i$ to $L_{i-1}$. For any of these $R_j$, if $v$ is its corresponding vertex in the dual graph, then set $\lambda(v) = u$. On each $R_j \setminus \{P_j, Q_1,...,Q_l\}$, where $Q_1,...,Q_l$ are the other nodes on $R_j$, we have $\omega_C(-\lambda)|_{R_j \setminus \{P_j,Q_1,...,Q_l\}} = \omega_{R_j}|_{R_j \setminus \{P_j,Q_1,...,Q_l\}} \otimes \pi^*\O_S(-u)$. Applying the residue map $\pi_*\omega_C \to \O_S$, we find a residue in $\O_S(-u)$. For a differential $\phi \in H^0(C,\omega_C(-\lambda))$, we denote this by $\res^i_{P_j,R_j}(\phi) \in \O_S(-u)$. Elements of $\O_S(-u)$ are of the form $[t_1\cdots t_i]\O_S$, where we view $[t_1\cdots t_i]$ as a formal parameter. We then have a map $\O_S(-u) \to \O_S$ by sending $[t_1\cdots t_i]$ to the element $t_1\cdots t_i \in \O_S$, and we see that $\res_{P_j,R_j}(\phi)$ is the image of $\res^i_{P_j,R_j}(\phi)$ under this map. 

\begin{example}
    To illustrate the difference between the residues $\res^i$ and $\res$, we give an example where the base is the spectrum of $k$, $C \to S = \Spec(k)$. 

    At a node $P_j$ on the branch $R_j$, assume that $\phi$ has a local expansion
    \[
    \phi_{P_j}|_{R_j} = u\sum a_ix^i \, dx,
    \]
    where $u$ is the value of $\lambda$ at the corresponding vertex in the dual graph of $C$. In this case, we have
    \[
    \res^i_{P_j,R_j}(\phi) = [u](a_{-1}),
    \]
    but since the image of any of the smoothing parameters $t_i$ is 0 in $k$, we see that 
    \[
    \res_{P_j,R_j}(\phi) = 0.
    \]
\end{example}

\begin{lemma}\label{sections}
    The space $H^0(C,\omega_C(-\lambda))$ is free of rank one over $A$.
\end{lemma}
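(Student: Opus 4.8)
The plan is to combine an explicit description of $\omega_C(-\lambda)$ along the components of $C$ with the theorem on cohomology and base change. The difficulty is that $A$ is artinian but in general not reduced, so Grauert's theorem does not apply; instead I will produce by hand a global section of $\omega_C(-\lambda)$ over $C$ whose restriction to the central fibre generates, and deduce the statement from there.

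First I would describe $\omega_C(-\lambda)$ on the components. Let $E\subseteq C$ be the core, the unique genus one subcurve. Combining Proposition \ref{pw} with $\omega_C|_{C_v}=\omega_{C_v/S}(\sum_q q)$ (the sum over points of a component $C_v$ lying over nodes of $C$), one checks that the outgoing slopes of $\lambda$ vanish along edges internal to the core and equal $1$ along edges leaving it, so $\O_C(-\lambda)|_E=\O_E(-D)$ with $D$ the divisor of nodes where rational components meet $E$; since $\omega_C|_E=\omega_{E/S}(D)$, this gives a canonical isomorphism $\omega_C(-\lambda)|_E\cong\omega_{E/S}$. On a rational component $R\cong\PP^1_S$ with unique node $P$ pointing toward the core and remaining nodes $Q_1,\dots,Q_l$, the slope of $\lambda$ is $-1$ at $P$ and $+1$ at each $Q_i$, so
\[
\omega_C(-\lambda)|_R\cong\omega_{R/S}\bigl(P+\textstyle\sum Q_i\bigr)\otimes\O_R\bigl(P-\textstyle\sum Q_i\bigr)\otimes\pi^*L_R\cong\O_{\PP^1}(-2)(2P)\otimes\pi^*L_R,
\]
a pullback to $R$ of a line bundle $L_R$ on $S$. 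The structural input is that, since $C$ has total genus one and $E$ is its only genus one subcurve, the complement of $E$ in the dual graph is a forest of trees each meeting $E$ in one vertex, and $\lambda$ increases monotonically along each tree.

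Next I would compute the central fibre and build a section. Restriction to the core gives $H^0(C_0,\omega_{C_0}(-\lambda))\to H^0(E_0,\omega_{E_0})=k$; this is injective, since a section vanishing on $E_0$ vanishes at the node where each first-layer rational component meets $E_0$ and hence, the bundle being trivial there, on that component, and one proceeds outward along the trees; and it is surjective, since a section on $E_0$ prescribes the value at those nodes, which determines the unique constant extension to the adjacent rational component, and so on. Thus $h^0(C_0,\omega_{C_0}(-\lambda))=1$. Now the Hodge bundle $\pi_*\omega_{E/S}$ of the genus one curve $E\to S$ is locally free of rank one and its formation commutes with base change, so it has a generator $\sigma_E$ with $\sigma_E|_{E_0}$ generating $H^0(E_0,\omega_{E_0})$; regarding $\sigma_E$ as a section of $\omega_C(-\lambda)$ over $E$ via the isomorphism above and extending it outward through the rational components (at each $R$ the value of the section already built along the, possibly thickened, point $P\subseteq R$ can be prescribed arbitrarily, because $\omega_C(-\lambda)|_R$ is pulled back from $S$), one obtains $\sigma\in H^0(C,\omega_C(-\lambda))$ whose restriction to $C_0$ restricts to a generator on $E_0$, hence generates $H^0(C_0,\omega_{C_0}(-\lambda))$.

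Finally I would invoke the theorem on cohomology and base change (e.g.\ \cite{hartRD}). Since the fibres of $\pi$ are curves, $R^i\pi_*=0$ for $i\geq2$, so the base-change map in degree $2$ is an isomorphism; hence the degree $1$ map is surjective, hence an isomorphism near the closed point $s_0$. The section $\sigma$ shows the degree $0$ map $\pi_*\omega_C(-\lambda)\otimes_A k\to H^0(C_0,\omega_{C_0}(-\lambda))$ is surjective, which forces $R^1\pi_*\omega_C(-\lambda)$ to be locally free near $s_0$ and the degree $0$ map to be an isomorphism; then (the degree $-1$ map being trivially surjective) $\pi_*\omega_C(-\lambda)$ is locally free near $s_0$ of rank $h^0(C_0,\omega_{C_0}(-\lambda))=1$, so since $S=\Spec A$ is local it is free of rank one over $A$. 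I expect the main obstacle to be the component-by-component computation in the first step: the signs of the slopes of $\lambda$ and the base twists $\pi^*L_R$ in the gluings at the nodes must be tracked precisely, since the dimension count on $C_0$, the monotone propagation, and the existence of $\sigma$ all rest on it; after that the descent to ``free of rank one'' is formal.
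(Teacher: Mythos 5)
Your proof takes a genuinely different route from the paper's, which is a one-liner: by \cite{rspw} Lemma 3.3.2 there is an isomorphism $\O_C(\lambda)\cong\omega_C$, hence $\omega_C(-\lambda)\cong\O_C$, and $H^0(C,\O_C)=A$ is free of rank one since $\pi$ is proper and flat with connected reduced geometric fibres. Your component-wise restriction computations --- $\omega_C(-\lambda)|_E\cong\omega_{E/S}$ on the core and $\omega_C(-\lambda)|_R\cong\pi^*L_R$ on each rational tail, via the slope bookkeeping from Proposition~\ref{pw} --- are correct and are essentially a fibrewise unpacking of that isomorphism; the calculation $h^0(C_0,\omega_{C_0}(-\lambda))=1$ by monotone propagation along the trees is also fine. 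The cohomology-and-base-change finish is then a substitute for knowing $\omega_C(-\lambda)\cong\O_C$ globally, and it is exactly there that a real gap appears.

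The gap is the construction of the global section $\sigma$. When the node joining a rational component $R$ to the inner part of $C$ has a nonzero nilpotent smoothing parameter $t$, one has locally $C\simeq\Spec\,A[x,y]/(xy-t)$ and $R=V(x)\simeq\Spec\,(A/t)[y]$; that is, $R$ is a $\PP^1$ over $A/t$ rather than over $A$, and similarly the core $E\subset C$ is flat only over the quotient of $A$ by the innermost smoothing parameters. Restricting to $E$ and then ``extending outward component by component'' therefore only produces data over these quotients of $A$, never a section over $C$ itself. Nor can this be waved away: $\omega_{C_0}(-\lambda)\cong\O_{C_0}$, but line bundles on $C$ restricting to $\O_{C_0}$ form a torsor under a group built from $H^1(C_0,\O_{C_0})\cong k\neq 0$, and the obstruction to lifting the generator $1\in H^0(C_0,\O_{C_0})$ across a square-zero extension with ideal $J$ is precisely the class of the deformed bundle in $H^1(C_0,\O_{C_0})\otimes J$. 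That this obstruction vanishes --- equivalently, that $\omega_C(-\lambda)$ is the \emph{trivial} deformation $\O_C$ --- is the substance of \cite{rspw} Lemma 3.3.2, and your fibrewise analysis does not supply it. You should either cite that lemma (as the paper does), or prove the triviality of $\omega_C(-\lambda)$ over $C$ directly; cohomology and base change alone cannot bridge this.
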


\begin{proof}
    We have an isomorphism $\O_C(\lambda) \simeq \omega_C$ (see \cite{rspw} Lemma 3.3.2.), so $\O_C(-\lambda) \simeq \omega_C^{\vee}$. Thus $\omega_C(-\lambda) \simeq \O_C$, and $H^0(C,\O_C)$ is free of rank one over $A$, which gives the result.
\end{proof}

We now construct the contraction. Let $E$ be the closed subcurve corresponding to the interior of the aligned circle in $C$, and let $\tau' : C \to C'$ be the topological contraction sending all of $E$ to a point $P$ in $C'$ and give this the structure sheaf $\O_{C'} = \tau'_*\O_C$. Note that, in the closed fiber, this space is a union of $n$ rational curves meeting transversally at a single point. See Figure \ref{fig:C'}. The underlying topological space of $\widebar{C}$ will be the same as $C'$ and the structure sheaf will be a subsheaf $\O_{\widebar{C}} \subset \tau'_*\O_C$ that we now describe.

\begin{figure}
    \centering
    \begin{tikzpicture}[x=0.75pt,y=0.75pt,yscale=-1,xscale=1]

\draw [color={rgb, 255:red, 208; green, 2; blue, 27 }  ,draw opacity=1 ]   (132.88,95.35) .. controls (168.92,140.64) and (98.57,164.55) .. (134.6,209.85) ;
\draw    (116.45,128.93) .. controls (171.07,116.72) and (186.79,119.77) .. (234.33,128.93) ;
\draw    (383.33,95.51) -- (459.67,169.81) ;
\draw    (451.63,94.66) -- (391.37,170.66) ;
\draw  [color={rgb, 255:red, 208; green, 2; blue, 27 }  ,draw opacity=1 ][fill={rgb, 255:red, 208; green, 2; blue, 27 }  ,fill opacity=1 ] (418.49,132.66) .. controls (418.49,131.07) and (419.84,129.78) .. (421.5,129.78) .. controls (423.16,129.78) and (424.51,131.07) .. (424.51,132.66) .. controls (424.51,134.25) and (423.16,135.54) .. (421.5,135.54) .. controls (419.84,135.54) and (418.49,134.25) .. (418.49,132.66) -- cycle ;
\draw    (262,142.33) -- (353.67,142.33) ;
\draw [shift={(355.67,142.33)}, rotate = 180] [color={rgb, 255:red, 0; green, 0; blue, 0 }  ][line width=0.75]    (10.93,-3.29) .. controls (6.95,-1.4) and (3.31,-0.3) .. (0,0) .. controls (3.31,0.3) and (6.95,1.4) .. (10.93,3.29)   ;
\draw    (107.79,176.93) .. controls (162.41,164.72) and (178.12,167.77) .. (225.67,176.93) ;

\draw (146,53.99) node [anchor=north west][inner sep=0.75pt]   [align=left] {$\displaystyle C$};
\draw (411.33,49.99) node [anchor=north west][inner sep=0.75pt]   [align=left] {$\displaystyle C'$};

\end{tikzpicture}

    \caption{The construction of $C'$.}
    \label{fig:C'}
\end{figure}
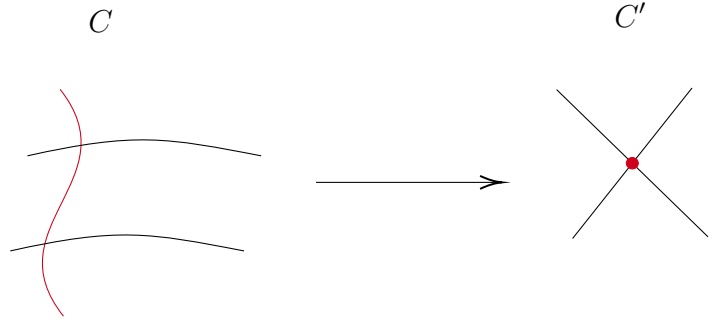

To define $\O_{\widebar{C}}$, we first note that $\tau$ will be an isomorphism outside of $E$, so we need only examine the case where $U \subset C'$ contains the point $P$. Note that $V = (\tau')^{-1}(U)$ is an open subset of $C$ containing $E$. Next, we fix a choice of generator $\phi \in H^0(C, \omega_C(-\lambda))$. Let $R_1,...,R_n$ be the rational curves in the outermost layer, $L_m$, and let $p_1,...,p_n$ be the nodes connecting $L_m$ to $L_{m-1}$. See Figure \ref{fig:layers}

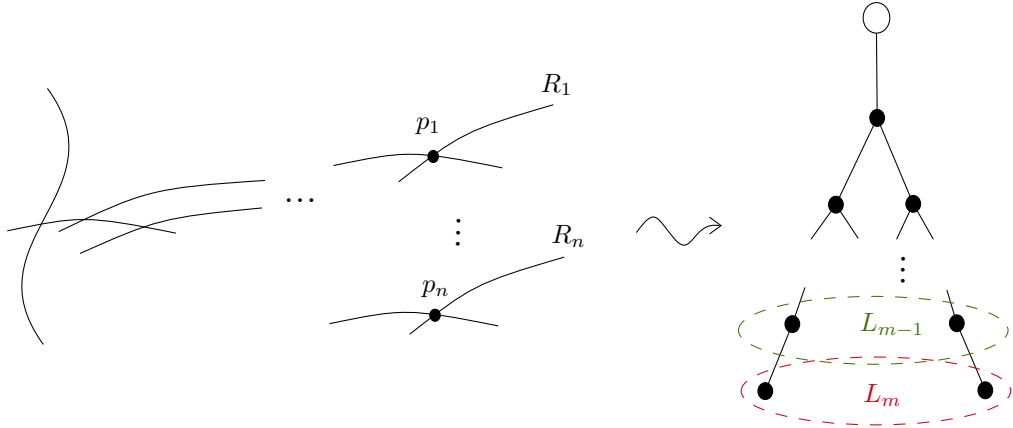
\begin{figure}
    \centering
\begin{tikzpicture}[x=0.75pt,y=0.75pt,yscale=-1,xscale=1,scale=0.9]

\draw    (35.84,73.59) .. controls (78.44,131.94) and (-9.05,158.86) .. (33.56,217.22) ;
\draw    (13.33,153.51) .. controls (54.01,145.16) and (57.1,144.52) .. (107.87,154.8) ;
\draw    (42.2,153.92) .. controls (89.86,131.13) and (93.47,129.39) .. (157.81,125.24) ;
\draw    (54.16,166.17) .. controls (96.48,147.39) and (99.69,145.95) .. (156.13,140.66) ;
\draw    (196.19,116.89) .. controls (236.87,108.54) and (239.96,107.9) .. (290.73,118.18) ;
\draw    (193.95,205.56) .. controls (234.62,197.2) and (237.71,196.56) .. (288.49,206.84) ;
\draw    (232.68,126.03) .. controls (266.79,99.31) and (269.37,97.27) .. (319.36,82.78) ;
\draw    (238.85,211.48) .. controls (272.96,184.76) and (275.54,182.72) .. (325.53,168.23) ;
\draw  [fill={rgb, 255:red, 0; green, 0; blue, 0 }  ,fill opacity=1 ] (249.1,111.79) .. controls (249.1,109.92) and (250.42,108.41) .. (252.05,108.41) .. controls (253.67,108.41) and (254.99,109.92) .. (254.99,111.79) .. controls (254.99,113.65) and (253.67,115.16) .. (252.05,115.16) .. controls (250.42,115.16) and (249.1,113.65) .. (249.1,111.79) -- cycle ;
\draw  [fill={rgb, 255:red, 0; green, 0; blue, 0 }  ,fill opacity=1 ] (250.11,200.83) .. controls (250.11,198.97) and (251.43,197.46) .. (253.06,197.46) .. controls (254.68,197.46) and (256,198.97) .. (256,200.83) .. controls (256,202.7) and (254.68,204.21) .. (253.06,204.21) .. controls (251.43,204.21) and (250.11,202.7) .. (250.11,200.83) -- cycle ;
\draw    (365.94,154.47) .. controls (376.73,139.33) and (377.11,146.11) .. (387.24,158) .. controls (397.38,169.89) and (397.38,147.72) .. (412.5,150.61) ;
\draw   (406.85,145.83) -- (413.35,150.55) -- (406.85,155.27) ;

\draw   (492.98,34.05) .. controls (492.98,29.33) and (496.32,25.5) .. (500.45,25.5) .. controls (504.57,25.5) and (507.92,29.33) .. (507.92,34.05) .. controls (507.92,38.78) and (504.57,42.61) .. (500.45,42.61) .. controls (496.32,42.61) and (492.98,38.78) .. (492.98,34.05) -- cycle ;
\draw  [fill={rgb, 255:red, 0; green, 0; blue, 0 }  ,fill opacity=1 ] (496.72,90.17) .. controls (496.72,87.56) and (498.57,85.44) .. (500.85,85.44) .. controls (503.13,85.44) and (504.97,87.56) .. (504.97,90.17) .. controls (504.97,92.77) and (503.13,94.89) .. (500.85,94.89) .. controls (498.57,94.89) and (496.72,92.77) .. (496.72,90.17) -- cycle ;
\draw  [fill={rgb, 255:red, 0; green, 0; blue, 0 }  ,fill opacity=1 ] (516.92,138.35) .. controls (516.92,135.74) and (518.77,133.63) .. (521.05,133.63) .. controls (523.32,133.63) and (525.17,135.74) .. (525.17,138.35) .. controls (525.17,140.96) and (523.32,143.07) .. (521.05,143.07) .. controls (518.77,143.07) and (516.92,140.96) .. (516.92,138.35) -- cycle ;
\draw  [fill={rgb, 255:red, 0; green, 0; blue, 0 }  ,fill opacity=1 ] (473.58,138.83) .. controls (473.58,136.23) and (475.43,134.11) .. (477.7,134.11) .. controls (479.98,134.11) and (481.83,136.23) .. (481.83,138.83) .. controls (481.83,141.44) and (479.98,143.56) .. (477.7,143.56) .. controls (475.43,143.56) and (473.58,141.44) .. (473.58,138.83) -- cycle ;
\draw  [fill={rgb, 255:red, 0; green, 0; blue, 0 }  ,fill opacity=1 ] (449.17,205.81) .. controls (449.17,203.2) and (451.02,201.09) .. (453.3,201.09) .. controls (455.57,201.09) and (457.42,203.2) .. (457.42,205.81) .. controls (457.42,208.42) and (455.57,210.53) .. (453.3,210.53) .. controls (451.02,210.53) and (449.17,208.42) .. (449.17,205.81) -- cycle ;
\draw  [fill={rgb, 255:red, 0; green, 0; blue, 0 }  ,fill opacity=1 ] (541.33,205.33) .. controls (541.33,202.72) and (543.18,200.61) .. (545.45,200.61) .. controls (547.73,200.61) and (549.58,202.72) .. (549.58,205.33) .. controls (549.58,207.94) and (547.73,210.05) .. (545.45,210.05) .. controls (543.18,210.05) and (541.33,207.94) .. (541.33,205.33) -- cycle ;
\draw  [fill={rgb, 255:red, 0; green, 0; blue, 0 }  ,fill opacity=1 ] (434.02,243.4) .. controls (434.02,240.79) and (435.87,238.67) .. (438.15,238.67) .. controls (440.43,238.67) and (442.27,240.79) .. (442.27,243.4) .. controls (442.27,246) and (440.43,248.12) .. (438.15,248.12) .. controls (435.87,248.12) and (434.02,246) .. (434.02,243.4) -- cycle ;
\draw  [fill={rgb, 255:red, 0; green, 0; blue, 0 }  ,fill opacity=1 ] (557.32,242.91) .. controls (557.32,240.31) and (559.17,238.19) .. (561.44,238.19) .. controls (563.72,238.19) and (565.57,240.31) .. (565.57,242.91) .. controls (565.57,245.52) and (563.72,247.64) .. (561.44,247.64) .. controls (559.17,247.64) and (557.32,245.52) .. (557.32,242.91) -- cycle ;
\draw    (500.45,42.61) -- (500.85,90.17) ;
\draw    (500.85,90.17) -- (477.7,138.83) ;
\draw    (500.85,90.17) -- (521.05,138.35) ;
\draw    (463.73,158.25) -- (477.7,138.83) ;
\draw    (490.24,157.77) -- (477.7,138.83) ;
\draw    (511.7,158.25) -- (521.05,138.35) ;
\draw    (521.05,138.35) -- (532.32,158.25) ;
\draw    (460.37,185.24) -- (453.3,205.81) ;
\draw    (539.9,186.68) -- (545.45,205.33) ;
\draw    (438.15,243.4) -- (453.3,205.81) ;
\draw    (561.44,242.91) -- (545.45,205.33) ;
\draw  [color={rgb, 255:red, 208; green, 2; blue, 27 }  ,draw opacity=1 ][dash pattern={on 4.5pt off 4.5pt}] (424.6,243.3) .. controls (424.6,232.79) and (458.42,224.27) .. (500.13,224.27) .. controls (541.85,224.27) and (575.67,232.79) .. (575.67,243.3) .. controls (575.67,253.81) and (541.85,262.33) .. (500.13,262.33) .. controls (458.42,262.33) and (424.6,253.81) .. (424.6,243.3) -- cycle ;
\draw  [color={rgb, 255:red, 65; green, 117; blue, 5 }  ,draw opacity=1 ][dash pattern={on 4.5pt off 4.5pt}] (423.11,209.3) .. controls (423.11,198.79) and (456.93,190.26) .. (498.64,190.26) .. controls (540.36,190.26) and (574.18,198.79) .. (574.18,209.3) .. controls (574.18,219.81) and (540.36,228.33) .. (498.64,228.33) .. controls (456.93,228.33) and (423.11,219.81) .. (423.11,209.3) -- cycle ;

\draw (166.6,133.77) node [anchor=north west][inner sep=0.75pt]  [font=\Large] [align=left] {...};
\draw (268.75,143.45) node [anchor=north west][inner sep=0.75pt]  [font=\Large,rotate=-91.1] [align=left] {...};
\draw (310.87,63.83) node [anchor=north west][inner sep=0.75pt]  [font=\footnotesize] [align=left] {$\displaystyle R_{1}$};
\draw (316.86,148.64) node [anchor=north west][inner sep=0.75pt]  [font=\footnotesize] [align=left] {$\displaystyle R_{n}$};
\draw (241.01,88.5) node [anchor=north west][inner sep=0.75pt]  [font=\footnotesize] [align=left] {$\displaystyle p_{1}$};
\draw (244.38,180.24) node [anchor=north west][inner sep=0.75pt]  [font=\footnotesize] [align=left] {$\displaystyle p_{n}$};
\draw (518.02,164.32) node [anchor=north west][inner sep=0.75pt]  [font=\Large,rotate=-91.1] [align=left] {...};
\draw (491.76,238.23) node [anchor=north west][inner sep=0.75pt]  [font=\footnotesize,color={rgb, 255:red, 208; green, 2; blue, 27 }  ,opacity=1 ] [align=left] {$\displaystyle L_{m}$};
\draw (489.63,198.23) node [anchor=north west][inner sep=0.75pt]  [font=\footnotesize,color={rgb, 255:red, 65; green, 117; blue, 5 }  ,opacity=1 ] [align=left] {$\displaystyle L_{m-1}$};

\end{tikzpicture}

    \caption{The outer rational curves $R_1,...,R_n$ and the outer layers $L_m$ and $L_{m-1}$.}
    \label{fig:layers}
\end{figure}

Now given a function $f \in H^0(V,\O_C)$, we can compute $\res^m_{p_i,R_i}(f\phi|_V) \in \O_S(-\delta)$, where $\delta$ is the value of $\lambda$ on the layer, $L_m$. Define the map $\res^m : H^0(V,\O_C) \to \O_S(-\delta)$ by 
\[
\res^m(f\phi) = \sum_{p_i} \res^m_{p_i,R_i}(f\phi|_V).
\]

\begin{proposition}
    The map $\res^m$ is an $A$-derivation. 
\end{proposition}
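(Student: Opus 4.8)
The plan is to verify the two defining properties of an $A$-derivation. We regard $\O_S(-\delta)$ as a module over $H^0(V,\O_C)$ through the ring map $H^0(V,\O_C)\to H^0(E,\O_E)=A$ given by restriction to $E$ (here $H^0(E,\O_E)=A$ because $E$ is proper and connected with $h^0=1$ in the special fiber, as in Lemma \ref{sections}); write $\ol f\in A$ for the image of $f$. The $A$-linearity of $\res^m$ is immediate: $f\mapsto f\phi|_V$ is $A$-linear, each twisted residue $\res^m_{p_i,R_i}$ is $A$-linear into $\O_S(-\delta)$ by Definition \ref{residue}, and one sums over $i$. The substance is the Leibniz identity $\res^m(fg)=\ol f\,\res^m(g)+\ol g\,\res^m(f)$, which I would establish by a local computation at each node $p_i$.

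Fix $i$ and a coordinate $x$ on $R_i$ vanishing at $p_i$. Since the outgoing slope of $\lambda$ from the vertex of $R_i$ along the edge $p_i$ (which points toward the core) is $-1$, Proposition \ref{pw} shows that $\O_C(-\lambda)|_{R_i}$ is $\O_{R_i}(p_i)$ near $p_i$; combined with $\omega_C|_{R_i}=\omega_{R_i}(\text{nodes of }R_i)$ (adjunction for the nodal curve $C$), this gives that $\omega_C(-\lambda)|_{R_i}$ is generated near $p_i$ by $x^{-2}dx$, so $\phi|_{R_i}=(a^{(i)}_{-2}x^{-2}+a^{(i)}_{-1}x^{-1}+a^{(i)}_0+\cdots)dx$ with $a^{(i)}_j\in A$ after absorbing the fixed $[t_1\cdots t_m]$-twist into the target. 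Because $f$ is regular near $E$ and $p_i\in E$, $f|_{R_i}$ near $p_i$ is $\ol f+f^{(i)}_1x+\cdots$, the constant term being $f(p_i)=\ol f$ since the two branches agree at the node. Extracting the $x^{-1}$-coefficient of the product gives $\res^m_{p_i,R_i}(f\phi)=\ol f\,a^{(i)}_{-1}+f^{(i)}_1 a^{(i)}_{-2}$ (again up to the $[t_1\cdots t_m]$-twist). Running the same computation for $fg$, with $(fg)_0=\ol f\,\ol g$ and $(fg)_1=\ol f\, g^{(i)}_1+\ol g\, f^{(i)}_1$, and summing over $i$, one finds that $\res^m(fg)-\ol f\,\res^m(g)-\ol g\,\res^m(f)=-\ol f\,\ol g\sum_i a^{(i)}_{-1}=-\ol f\,\ol g\cdot\res^m(1)$. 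Thus the Leibniz rule is equivalent to the vanishing $\res^m(1)=\sum_i\res^m_{p_i,R_i}(\phi)=0$.

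I expect proving $\res^m(1)=0$ to be the main obstacle, and it is genuinely subtler than the node relation Lemma \ref{invert}: the twist by $-\lambda$ is asymmetric at $p_i$, and indeed the computation above (applied on the other branch, where the outgoing slope of $\lambda$ is $+1$) shows that $\omega_C(-\lambda)$ is generated on the $E$-side of $p_i$ by a regular differential, so the residue along the $E$-branch is automatically $0$ while $\res_{p_i,R_i}(\phi)$ need not vanish branch-by-branch. Instead I would apply the residue theorem in its meromorphic form (Lemma \ref{res_thm}) on each rational bridge $R_i\cong\PP^1_A$: the poles of $\phi|_{R_i}$ sit only at nodes of $R_i$, and at every node of $R_i$ other than $p_i$ the outgoing slope of $\lambda$ from $R_i$ is $\geq 0$ (these nodes lead outward, away from the core), so the same local analysis shows $\phi|_{R_i}$ is regular there with zero residue; a node joining $R_i$ to a component at the same $\lambda$-level, if present, contributes a term cancelled after summing by Lemma \ref{invert}. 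Then $\sum_q\res_{q,R_i}(\phi|_{R_i})=0$ forces $\res_{p_i,R_i}(\phi)=0$, hence $\res^m(1)=0$ and the Leibniz rule follows. The points requiring care are the bookkeeping of the $\O_S(-\delta)$-twist and the identification of the signs of the outgoing slopes of $\lambda$ at the nodes of $R_i$ --- precisely where the ``outermost layer'' hypothesis on the $R_i$ is used.
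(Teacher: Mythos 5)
Your proof is correct and takes essentially the same route as the paper: a local Laurent computation at each outer node $p_i$, with the key input that the $x^{-1}$-coefficient of $\phi|_{R_i}$ vanishes, which both you and the paper deduce from the residue theorem applied on the rational bridge $R_i$. The only organizational difference is that you retain $a^{(i)}_{-1}$ in the computation and reduce the Leibniz rule to the single vanishing $\res^m(1)=0$, whereas the paper asserts $a^{(i)}_{-1}=0$ up front (justified by the same one-line observation that you unpack via the slope bookkeeping at the nodes of $R_i$).
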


\begin{proof}
    The $A$-linearity is clear from definition, so we need only show the Leibniz rule holds. We can reduce to showing this is true when taking the residue at a single $p_i$, since summing over all $p_i$ will still preserve the Leibniz rule. Thus we must show that given $f,g \in H^0(V,\O_C)$, $\res^m_{p_i,R_i}(fg\phi|_V) = f(p_i)\res^m_{p_i,R_i}(g\phi|_V) + g(p_i)\res^m_{p_i,R_i}(f\phi|_V)$. 

    Letting $x$ be a local coordinate for $R_i$ at $p_i$, we can write 
    \begin{align*}
        (f_{p_i})|_{R_i} &= c_0 + c_1x + \cdots \\
        (g_{p_i})|_{R_i} &= d_0 + d_1x + \cdots \\
        (\phi_{p_i})|_{R_i} &= [t](\gamma_{-2}x^{-2} + \gamma_0 + \gamma_1x + \cdots).
    \end{align*}
    Note that there is no $x_i^{-1}$ term as the total residue of $\phi|_{R_i}$ must be 0. A computation then yields
    \begin{align*}
        \res^m_{p_i,R_i}(fg\phi|_V) &= [t](c_0d_1 + d_0c_1) \\
        &= f(p_i)\res^m_{p_i,R_i}(g\phi|_V) + g(p_i)\res^m_{p_i,R_i}(f\phi|_V),
    \end{align*}
    as desired.
\end{proof}

\begin{lemma}\label{splitting}
    The image of the map $\res^m : H^0(V,\O_C) \to \O_S(-\delta)$ is generated by $\mathrm{Ann}_A(t)$ and we have a splitting $[t]\mathrm{Ann}_A(t) \to H^0(V,\O_C)$.
\end{lemma}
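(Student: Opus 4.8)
The plan is to reduce to a local computation at the nodes $p_1,\dots,p_n$, using that $\res^m$ is an $A$-linear derivation. Since an $A$-linear derivation kills constants and $\pi_*\O_E=\O_S$ (as $E\to S$ is proper with reduced connected genus-one fibres), every $f\in\O_C(V)$ differs from the constant $f|_E\in A$ by a function vanishing on $E$, so $\res^m(f\phi)=\res^m\bigl((f-f|_E)\phi\bigr)$ and we may assume $f|_E=0$; such an $f$ then vanishes at every $p_i$. For the inclusion $\im(\res^m)\subseteq[t]\mathrm{Ann}_A(t)$, note that $[t]\mathrm{Ann}_A(t)$ is the kernel of the twist-forgetting map $\O_S(-\delta)\to\O_S$, and that this map carries $\res^m_{p_i,R_i}(f\phi)$ to $\res_{p_i,R_i}(\ol{f\phi})$, where $\ol{f\phi}$ is the image of $f\phi$ under $\omega_C(-\lambda)\to\omega_C$. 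Because $\lambda$ increases with slope $1$ across the edge at $p_i$, the twist by $-\lambda$ has a simple zero on the $E$-branch at $p_i$, cancelling the logarithmic pole of $\omega_C$ there, so $\ol{f\phi}$ is regular on the $E$-branch at $p_i$ and $\res_{p_i,E}(\ol{f\phi})=0$. By Lemma~\ref{invert} then $\res_{p_i,R_i}(\ol{f\phi})=0$ for every $i$, whence $\res^m(f\phi)\in\ker\bigl(\O_S(-\delta)\to\O_S\bigr)=[t]\mathrm{Ann}_A(t)$.

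For the reverse inclusion I would compute $\res^m$ explicitly. After shrinking $U$ we may assume each $V\cap R_i$ is affine, so that for any $(b_1,\dots,b_n)\in A^n$ there is an $f\in\O_C(V)$ vanishing on $E$ with linear coefficient $b_i$ at $p_i$ along $R_i$. Writing the local expansion $(\phi_{p_i})|_{R_i}=[t]\bigl(\gamma^{(i)}_{-2}x^{-2}+\gamma^{(i)}_{0}+\gamma^{(i)}_{1}x+\cdots\bigr)dx$ — the $x^{-1}$-term being absent because $\phi|_{R_i}$ has no pole on $R_i\cong\PP^1$ away from $p_i$, so Lemma~\ref{res_thm} forces its residue there to vanish — one finds $\res^m_{p_i,R_i}(f\phi)=[t]\,b_i\gamma^{(i)}_{-2}$, hence $\res^m(f\phi)=[t]\sum_i b_i\gamma^{(i)}_{-2}$. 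Thus $\im(\res^m)=[t]\cdot\mathfrak{a}$ with $\mathfrak{a}=(\gamma^{(1)}_{-2},\dots,\gamma^{(n)}_{-2})\subseteq A$, and the first step shows $\mathfrak{a}\subseteq\mathrm{Ann}_A(t)$. The remaining point, $\mathrm{Ann}_A(t)\subseteq\mathfrak{a}$, is the one place where the hypothesis that $\phi$ is a \emph{generator} of $H^0(C,\omega_C(-\lambda))$ — equivalently a trivialisation of $\omega_C(-\lambda)\cong\O_C$ — is genuinely used: because $\phi$ generates, its double-pole profile $(\gamma^{(i)}_{-2})_i$ at the $p_i$ is as large as the layer structure permits, and I would verify that the ideal it generates is exactly $\mathrm{Ann}_A(t)$ by a dimension/cohomology comparison of $\omega_C(-\lambda)$ with its twists down by the $p_i$, using the genus-one structure of $E$ and the central alignment. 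I expect this inclusion to be the main obstacle; everything else is formal.

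Finally, for the splitting I would produce explicit preimages: for each $i$ pick $f^{(i)}\in\O_C(V)$ vanishing on $E$ and on $R_j$ for $j\ne i$ and equal to the local coordinate $x$ near $p_i$ on $R_i$; then $\res^m(f^{(i)}\phi)=[t]\gamma^{(i)}_{-2}$ by the computation above. Since the $\gamma^{(i)}_{-2}$ generate $\mathrm{Ann}_A(t)$, expressing $a=\sum_i c_i\gamma^{(i)}_{-2}$ and sending $[t]a\mapsto\sum_i c_i f^{(i)}$ yields a section of $\res^m$ onto $[t]\mathrm{Ann}_A(t)$, i.e.\ the asserted splitting $[t]\mathrm{Ann}_A(t)\to\O_C(V)$.
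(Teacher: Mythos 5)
Your argument for the inclusion $\im(\res^m)\subseteq[t]\mathrm{Ann}_A(t)$ rests on the claim that $\ol{f\phi}$ is regular on the $E$-branch at each $p_i$, so that $\res_{p_i,R_i}(\ol{f\phi})=0$ \emph{for every $i$}. This is false, and the error is the crux of the lemma. The slope computation shows that $\phi$, restricted to the component $Y_i\subset E_0$ of the reduced fibre, has no pole at $p_i$ as a section of $\omega_{Y_i}$. But $\res_{p_i,E}$ is computed on the localization of $\hat\O_{C,p_i}$ at the branch prime \emph{over $A$}, where $x_i=t_m/y_i$ is a unit; there both $\ol\phi$ and, crucially, $f$ itself acquire poles with nilpotent coefficients. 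Even with $f$ vanishing on $E$, so that $f=c_1^ix_i+\cdots$ near $p_i$, its localization to the $E$-branch is $c_1^i t_m y_i^{-1}+\cdots$, which is not regular unless $c_1^it_m=0$. One computes $\res_{p_i,R_i}(\ol{f\phi})=t\gamma_{-2}^ic_1^i$, typically nonzero. What is true — and what the paper's proof uses implicitly when it asserts that the composition $\O_C(V)\to\O_S$ vanishes — is that the \emph{sum} $\sum_i\res_{p_i,R_i}(\ol{f\phi})$ vanishes, which follows from Lemma~\ref{node_pass} together with the residue theorem on the proper genus-one core $L_0$ (where $f|_{L_0}$ is constant and $\ol\phi|_{L_0}$ is holomorphic). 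If your individual vanishing were correct, you would in fact conclude $\im(\res^m)=0$, not $[t]\mathrm{Ann}_A(t)$.

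The same error undermines the second half. The statement that for any $(b_1,\dots,b_n)\in A^n$ there is an $f\in\O_C(V)$ with $c_1^i=b_i$ is false: the tuple $(c_1^i)_i$ of an actual regular function is constrained, precisely by $t\sum_i\gamma_{-2}^ic_1^i=0$ (again the residue theorem on $L_0$). The ideal $\mathfrak a=(\gamma_{-2}^1,\dots,\gamma_{-2}^n)$ is not $\mathrm{Ann}_A(t)$ but all of $A$, since each $\gamma_{-2}^i$ is a \emph{unit}: $\phi$ generates $\omega_C(-\lambda)\simeq\O_C$, hence $\phi|_{R_i}$ is a nowhere-vanishing section of $\omega_{R_i}(2p_i)\otimes\pi^*\O_S(-\delta)$ with unit leading coefficient. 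Your "first step shows $\mathfrak a\subseteq\mathrm{Ann}_A(t)$" would therefore give $A\subseteq\mathrm{Ann}_A(t)$, i.e.\ $t=0$, so the "remaining point $\mathrm{Ann}_A(t)\subseteq\mathfrak a$" is vacuous and not where the content lies. The generator hypothesis is used not to make the $\gamma_{-2}^i$ generate $\mathrm{Ann}_A(t)$, but to make them units, so that for any given $c\in\mathrm{Ann}_A(t)$ one may choose $c_1^1=c/\gamma_{-2}^1$ and $c_1^i=0$ for $i\neq1$; this choice \emph{does} satisfy the constraint, hence does come from a regular $f$, and yields $\res^m(f\phi)=[t]c$ — which is exactly the paper's splitting.
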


\begin{proof}
    First, note that $\O_S(-\delta) \simeq [t]A$. Now, the only condition on the map $H^0(V,\O_C) \to \O_S(-\delta)$ is that the composition $H^0(V,\O_C) \to \O_S(-\delta) \to \O_S$ must be 0, as the total residue (in $\O_S$) of any function must be 0 (from the Mittag-Leffler problem, see, e.g., \cite{acgh1} pgs. 13-15). Let $f \in H^0(V,\O_C)$ and assume we have local expansions at each $p_i$
    \[
    f_{p_i} = c_0^i + c_1^ix_i + \cdots
    \]
    and 
    \[
    \phi_{p_i} = [t](\gamma_{-2}^ix_i^{-2} + \gamma_0^i + \cdots).
    \]
    Putting these together, we have
    \[
    \res^m(f\phi) = [t]\sum_{i=1}^n \gamma_{-2}^ic_1^i \in \O_S(-\delta).
    \]
    After mapping to $\O_S$, we get the following condition
    \[
    t\sum_{i=1}^n \gamma_{-2}^ic_1^i = 0 \in \O_S.
    \]
    Therefore, we see that the only condition on $f$ is that $\sum_{i=1}^n \gamma_{-2}^ic_1^i \in \mathrm{Ann}_A(t)$, so the image of the map lies in $[t]\mathrm{Ann}_A(t)$.

    To show that the image is all of $[t]\mathrm{Ann}_A(t)$, we define a splitting $[t]\mathrm{Ann}_A(t) \to H^0(V,\O_C)$. Let $c \in \mathrm{Ann}_A(t)$. We then define a function $f \in H^0(V,\O_C)$ by $f_{p_1} = (c/\gamma_{-2}^1)x_1$ and $f_{p_i} = 0$ for $i \neq 1$, which one can check is a splitting.
\end{proof}

We then define $f$ to be in $\O_{\widebar{C}}(U)$ if it satisfies
\begin{equation}\label{res_cond}
    \res^m(f\phi) = \sum_{p_i} \res^m_{p_i,R_i}(f\phi|_V) = 0 \in \O_S(-\delta).
\end{equation}
Note that $\O_{\widebar{C}}(U)$ is a ring, since it is the kernel of a derivation.

\begin{remark}
    When $U = \widebar{C}$, we have $\O_{\widebar{C}}(\widebar{C}) = A$. This follows since $\O_{\widebar{C}}(\widebar{C})$ is the subset of $\O_C(C)$ with zero residue, but $\O_C(C)$ is only constants, which all have zero residue. 
\end{remark}

\begin{definition}\label{contraction_def}
    Let $\pi : C \to S$ be a family of centrally aligned genus one curves with $S$ the spectrum of a local artinian ring, $A$. The contraction $\widebar{\pi} : \widebar{C} \to S$ is defined to be the ringed space $\widebar{C} = (C',\O_{\widebar{C}})$ with the map $\tau : C \to \widebar{C}$ topologically being the contraction and on sheaves being the map $\O_{\widebar{C}} \hookrightarrow \tau'_*\O_C = \tau_*\O_C$. 
\end{definition}

\begin{remark}
    We could carry out this construction at any layer $L_i$ by using $\res^i$ instead of $\res^m$. Thus we have a contraction for each layer $L_i$, but we will only be concerned with the contraction at $L_m$.
\end{remark}

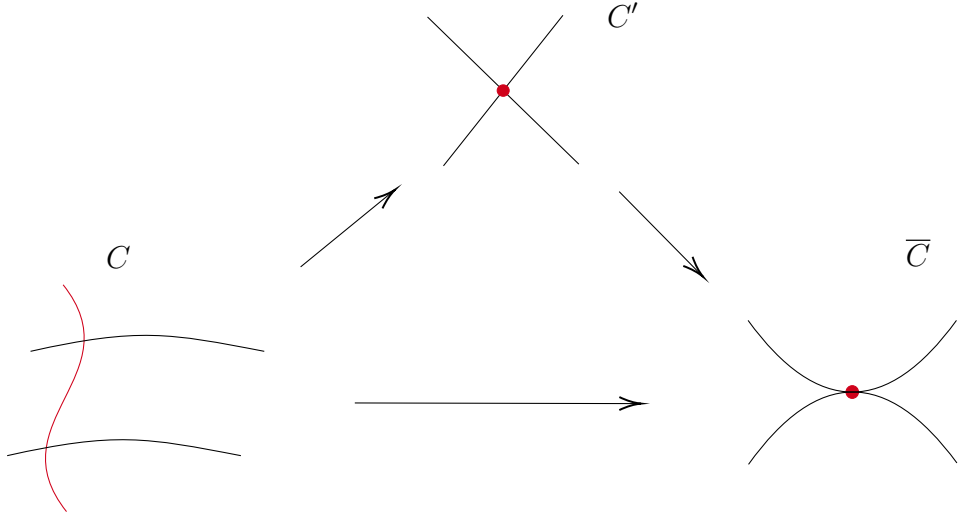
\begin{figure}
    \centering
    \begin{tikzpicture}[x=0.75pt,y=0.75pt,yscale=-1,xscale=1]

\draw [color={rgb, 255:red, 208; green, 2; blue, 27 }  ,draw opacity=1 ]   (112.22,157.02) .. controls (148.25,202.32) and (77.91,226.23) .. (113.94,271.52) ;
\draw    (95.79,190.61) .. controls (150.41,178.39) and (166.12,181.45) .. (213.67,190.61) ;
\draw    (84,243.28) .. controls (138.62,231.07) and (154.33,234.12) .. (201.88,243.28) ;
\draw    (296,21.85) -- (372.33,96.15) ;
\draw    (364.3,21) -- (304.04,97) ;
\draw  [color={rgb, 255:red, 208; green, 2; blue, 27 }  ,draw opacity=1 ][fill={rgb, 255:red, 208; green, 2; blue, 27 }  ,fill opacity=1 ] (331.15,59) .. controls (331.15,57.41) and (332.5,56.12) .. (334.17,56.12) .. controls (335.83,56.12) and (337.18,57.41) .. (337.18,59) .. controls (337.18,60.59) and (335.83,61.88) .. (334.17,61.88) .. controls (332.5,61.88) and (331.15,60.59) .. (331.15,59) -- cycle ;
\draw  [color={rgb, 255:red, 208; green, 2; blue, 27 }  ,draw opacity=1 ][fill={rgb, 255:red, 208; green, 2; blue, 27 }  ,fill opacity=1 ] (507.06,211.16) .. controls (507.06,209.38) and (508.47,207.95) .. (510.22,207.95) .. controls (511.96,207.95) and (513.37,209.38) .. (513.37,211.16) .. controls (513.37,212.93) and (511.96,214.37) .. (510.22,214.37) .. controls (508.47,214.37) and (507.06,212.93) .. (507.06,211.16) -- cycle ;
\draw   (457.67,175) .. controls (492.7,223.21) and (527.73,223.21) .. (562.77,175) ;
\draw   (563,246.96) .. controls (527.65,198.98) and (492.62,199.22) .. (457.9,247.67) ;
\draw    (232,148) -- (278.12,110.27) ;
\draw [shift={(279.67,109)}, rotate = 140.71] [color={rgb, 255:red, 0; green, 0; blue, 0 }  ][line width=0.75]    (10.93,-3.29) .. controls (6.95,-1.4) and (3.31,-0.3) .. (0,0) .. controls (3.31,0.3) and (6.95,1.4) .. (10.93,3.29)   ;
\draw    (392.67,110) -- (433.6,151.57) ;
\draw [shift={(435,153)}, rotate = 225.45] [color={rgb, 255:red, 0; green, 0; blue, 0 }  ][line width=0.75]    (10.93,-3.29) .. controls (6.95,-1.4) and (3.31,-0.3) .. (0,0) .. controls (3.31,0.3) and (6.95,1.4) .. (10.93,3.29)   ;
\draw    (259.33,216.67) -- (401.67,217) ;
\draw [shift={(403.67,217)}, rotate = 180.13] [color={rgb, 255:red, 0; green, 0; blue, 0 }  ][line width=0.75]    (10.93,-3.29) .. controls (6.95,-1.4) and (3.31,-0.3) .. (0,0) .. controls (3.31,0.3) and (6.95,1.4) .. (10.93,3.29)   ;

\draw (132.67,136.33) node [anchor=north west][inner sep=0.75pt]   [align=left] {$\displaystyle C$};
\draw (385.33,13) node [anchor=north west][inner sep=0.75pt]   [align=left] {$\displaystyle C'$};
\draw (536,132.33) node [anchor=north west][inner sep=0.75pt]   [align=left] {$\displaystyle \overline{C}$};

\end{tikzpicture}

    \caption{The construction of $\widebar{C}$.}
    \label{fig:enter-label}
\end{figure}

The map $\widebar{\pi}$ is defined by the same map $A \to A = \O_{\widebar{C}}(\widebar{C})$ that defines $\pi$, and since $\O_{\widebar{C}}(\widebar{C}) \to \O_C(C)$ is the identity, we have the following commuting diagram:
\begin{center}
    \begin{tikzcd}
C \arrow[rr, "\tau"] \arrow[rd, "\pi"'] &   & \widebar{C} \arrow[ld, "\widebar{\pi}"] \\
                                        & S &                                          
\end{tikzcd}
\end{center}

We now collect some facts about $\widebar{C}$ and residues. 

First, let $B = A/\mm^i$ be a quotient of $A$ by a power of its maximal ideal and set $T = \Spec(B)$ and $C_T = C \times_S T$. For any value of $\lambda$, say $u$, we have an isomorphism $\O_T(-u) \simeq \O_S(-u) \otimes_A B$. Indeed, given a map of log schemes $f : T \to S$, we have a natural isomorphism $\O_T(-u) \simeq f^*\O_S(-u) = \O_S(-u) \otimes_A B$.

Next, let $F : C_T \to C$ be the natural map. Since $F^*\omega_C \simeq \omega_{C_T}$ (see \cite{stacks-project} Tag 0E6R) and $F^*\O_C(-\lambda) \simeq \O_{C_T}(-\lambda)$, we see that $F^*\omega_C(-\lambda) \simeq \omega_{C_T}(-\lambda)$. We have a natural map (using the $R^0$ case of base change) 
\[
H^0(C,\omega_C(-\lambda)) \otimes_A B \to H^0(C_T,\omega_{C_T}(-\lambda)).
\]
Then identifying $H^0(C,\omega_C(-\lambda)) \otimes_A B \simeq H^0(C,\omega_C(-\lambda))/\mm^iH^0(C,\omega_C(-\lambda))$, we see that we have a surjective map $H^0(C,\omega_C(-\lambda)) \to H^0(C_T,\omega_{C_T}(-\lambda))$. Now if $\phi$ is a generator of $H^0(C,\omega_C(-\lambda))$ and $\psi$ is a generator of $H^0(C_T,\omega_{C_T}(-\lambda))$, then the image of $\phi$ under the above map will differ from $\psi$ by a multiple of a unit, so their residues will only differ by a multiple of a unit.

\begin{lemma}\label{res_lift}
    Let $P_j \in R_j$ be a node connecting $L_i$ to $L_{i-1}$ in $C_0$ and let $\lambda$ have the value $u$ at layer $L_i$. Let $B = A/\mm^l$ be a quotient of $A$ by a power of its maximal ideal, $T = \Spec(B)$, and $C_T = C \times_S T$. If $g \in \O_C(V)$ is a lift of $f \in \O_{C_T}(V)$ and $\phi \in H^0(C,\omega_C(-\lambda))$ is a lift of $\psi \in H^0(C_T,\omega_{C_T}(-\lambda))$, then $\res^i_{P_j,R_j}(g\phi|_V)$ maps to $\res^i_{P_j,R_j}(f\psi|_V)$ under the map $\O_S(-u) \to \O_T(-u)$. 
\end{lemma}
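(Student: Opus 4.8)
The plan is to reduce the lemma to the description of $\res^i_{P_j,R_j}$ as the operation of expanding $g\phi$ in a local coordinate $x$ at $P_j$ along the branch $R_j$, reading off the coefficient of $x^{-1}$, and multiplying by the formal parameter $[u]$ trivialising $\O_S(-u)$. Once this local description is in place, the lemma becomes the formal statement that extracting a Laurent coefficient commutes with applying the ring homomorphism $A\to B$. The only genuine content is to verify that the coordinate $x$ and the differential generator $dx$ used to compute the residue on $C$ restrict to admissible choices on $C_T$, so that the residue on $C_T$ is computed against the reductions mod $\mm^l$ of exactly the same data.

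Accordingly, I would first set up a compatible local picture at $P_j$. Let $q\subset\hat\O_{C,P_j}$ be the prime corresponding to the branch $R_j$, with $q_T$ the corresponding prime of $\hat\O_{C_T,P_j}$. Since $B=A/\mm^l$ and $A$ is Noetherian, $\hat\O_{C_T,P_j}\simeq\hat\O_{C,P_j}/\mm^l\hat\O_{C,P_j}\simeq\hat\O_{C,P_j}\otimes_A B$, so localizing gives $(\hat\O_{C_T})_{q_T}\simeq(\hat\O_C)_q\otimes_A B$, which by Lemma~\ref{stalk} is $A(\!(x)\!)\otimes_A B=B(\!(x)\!)$. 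Moreover the isomorphism $(\hat\O_C)_q\simeq A(\!(x)\!)$ is built in the proof of Lemma~\ref{stalk} from a lift $y$ of a uniformizer of the central fibre, and since $C$ and $C_T$ share that fibre, the image of $y$ on $C_T$ is again such a lift; hence the two identifications may be chosen compatibly, with $x\mapsto x$. Combining this with $F^*\omega_C(-\lambda)\simeq\omega_{C_T}(-\lambda)$ and Lemma~\ref{differential}, together with $\O_S(-u)=[u]A$ and $\O_T(-u)=[u]B$, one gets compatible identifications of the localized completed stalks of $\omega_C(-\lambda)$ and $\omega_{C_T}(-\lambda)$ for which the generator $[u]\,dx$ restricts to $[u]\,dx$. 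I expect this paragraph to be the main obstacle: it is where one must check that no spurious unit creeps in when passing from $C$ to $C_T$, which rests on completion commuting with $-\otimes_A B$ over the artinian ring $A$, on $C$ and $C_T$ having the same geometric fibre, and on the constructions in Lemmas~\ref{stalk} and~\ref{differential} being visibly functorial in the chosen lift.

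With that in hand, the remaining step is the promised formal computation. Write $g_{P_j}=\sum_m c_m x^m$ with $c_m\in A$ and $(\phi_{P_j})|_{R_j}=[u]\sum_m b_m x^m\,dx$ with $b_m\in A$. Because $g$ lifts $f$, $\phi$ lifts $\psi$, and the identifications above are compatible with $-\otimes_A B$, the corresponding expansions on $C_T$ read $f_{P_j}=\sum_m\bar c_m x^m$ and $(\psi_{P_j})|_{R_j}=[u]\sum_m\bar b_m x^m\,dx$, where bars denote images in $B$. By the definition of $\res^i$,
\begin{align*}
\res^i_{P_j,R_j}(g\phi|_V)&=[u]\sum_{k+m=-1}c_k b_m\in\O_S(-u),\\
\res^i_{P_j,R_j}(f\psi|_V)&=[u]\sum_{k+m=-1}\bar c_k\bar b_m\in\O_T(-u),
\end{align*}
and the natural map $\O_S(-u)\to\O_T(-u)\simeq\O_S(-u)\otimes_A B$ sends $[u]a\mapsto[u]\bar a$. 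Since $\overline{\sum_{k+m=-1}c_k b_m}=\sum_{k+m=-1}\bar c_k\bar b_m$, this map carries $\res^i_{P_j,R_j}(g\phi|_V)$ to $\res^i_{P_j,R_j}(f\psi|_V)$, which is precisely the assertion of the lemma.
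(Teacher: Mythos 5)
Your proof is correct and follows essentially the same strategy as the paper's: expand $g\phi$ and $f\psi$ in compatible local coordinates at $P_j$, observe that $\res^i_{P_j,R_j}$ is a linear (Cauchy-product) functional of the Laurent coefficients, and conclude that it commutes with the reduction $A\to B$. Two minor differences: you treat a general quotient $B=A/\mm^l$ in one shot, whereas the paper reduces to the case $l=n-1$ and iterates, and you explicitly justify (via the construction in Lemma~\ref{stalk}, i.e.\ functoriality in the chosen lift of the uniformizer) that the coordinate $x$ on $C$ reduces to an admissible coordinate on $C_T$ with no spurious unit, a compatibility the paper's computation uses implicitly.
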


\begin{proof}
    Let $n$ be the smallest integer such that $\mm^n = 0$. We show the lemma is true for $l = n-1$. This implies the general result, since one can apply the lemma repeatedly to get to any $l$.
    
    This is a computation in local coordinates. Let $x$ be a local coordinate for $R_j$. Let $t_1 \cdots t_i$ be the product of the smoothing parameters up to $L_i$. First, we write
    \[
    (\psi)_{P_j}|_{R_j} = [t_1 \cdots t_i](\gamma_{-2}x^{-2} + \gamma_0 + \cdots)\,dx
    \]
    \[
    (f)_{P_j} = c + c_1x + \cdots
    \]
    from which we compute
    \[
    \res^i_{P_j,R_j}(f\psi|_V) = [t_1 \cdots t_i]\gamma_{-2}c_1.
    \]
    Now since $g$ and $\phi$ are lifts of $f$ and $\psi$, respectively, we can write
    \[
    (\phi)_{P_j}|_{R_j} = [t_1 \cdots t_i]((\gamma_{-2}+v_{-2})x^{-2} + (\gamma_0 + v_0) + \cdots)\,dx
    \]
    \[
    (g)_{P_j} = (c + u_0) + (c_1 + u_1)x + \cdots
    \]
    where $u_i,v_i \in \mm^{n-1}$. From this we compute
    \[
    \res^i_{P_j,R_j}(g\phi|_V) = [t_1 \cdots t_i](\gamma_{-2}c_1 + u_1\gamma_{-2} + v_{-2}c_1 + v_{-2}u_1), 
    \]
    so we see that
    \[
    \res^i_{P_j,R_j}(f\psi|_V) = \res^i_{P_j,R_j}(g\phi|_V) \pmod{\mm^{n-1}}
    \]
    as desired.
\end{proof}

Next, let $S = \Spec(R)$, where $R$ is a local noetherian ring with maximal ideal $\mm$. Let $S_n = \Spec(R/\mm^{n+1})$ and $C_n = C \times_S S_n$. Since $R/\mm^{n+1}$ is a local artinian ring, we have the following diagram of cartesian squares
\begin{center}
    \begin{tikzcd}
C_0 \arrow[r] \arrow[d] & C_1 \arrow[r] \arrow[d] & C_2 \arrow[d] \arrow[r] & \cdots \\
S_0 \arrow[r]           & S_1 \arrow[r]           & S_2 \arrow[r]           & \cdots
\end{tikzcd}
\end{center}
and a contraction for each $i$, $\tau_i : C_i \to \widebar{C_i}$, defined using a differential $\phi_i \in H^0(C_i,\omega_{C_i}(-\lambda))$.

Based on the conclusion of the paragraph before Lemma \ref{res_lift}, we will assume that $\phi_{n+1}$ is a lift of $\phi_n$, since any lift of $\phi_n$ will only differ from $\phi_{n+1}$ by a unit multiple, and we are only concerned with the residue being zero or not. 

\begin{lemma}\label{node_pass}
    Let $q_1,...,q_l$ be the nodes connecting layer $L_0$ to layer $L_1$, $p_1,...,p_n$ the nodes connecting the outermost layer of rational curves $R_1,...,R_n$ to the next layer in, and let $f_n \in \O_{C_n}(V)$. Then 
    \[
    \sum_{p_i} \res_{p_i,R_i}(f_n\phi_n|_V) = -\sum_{q_i} \res_{q_i,L_0}(f_n\phi_n|_V).
    \]
\end{lemma}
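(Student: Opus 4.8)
The plan is to propagate the residue condition layer by layer from the outer rational curves $R_i$ inward to the nodes $q_1,\dots,q_l$ connecting the core $L_0$ to $L_1$, using at each stage the compatibility of residues at nodes (Lemma \ref{invert}, suitably interpreted for the $\res^i$ refinements) together with a residue theorem on each rational component to show that the partial sums of residues on a given layer equal, up to sign, the partial sums on the next layer in. Concretely, I would first treat the case of a single intermediate layer ($m=1$), where the statement reduces to: the rational components $R_1,\dots,R_n$ are glued to $L_0$ directly, and for each $R_i$ we have $\res^1_{p_i,R_i}(f_n\phi_n|_V) + \res^1_{q_{\sigma(i)},R_i}(f_n\phi_n|_V) = 0$ where $q_{\sigma(i)}$ is the node on the other end of $R_i$ — this is the single-component residue theorem (Lemma \ref{res_thm}) applied to the smooth curve $R_i$, noting that $f_n\phi_n|_{R_i}$ has poles only at the two nodes and its total residue vanishes. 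Then the node-compatibility Lemma \ref{invert} lets us rewrite each $\res^1_{q_{\sigma(i)},R_i}$ as $-\res^1_{q_{\sigma(i)},L_0}$, and summing over $i$ gives the claim.

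For the general case with layers $L_0,\dots,L_m$, I would induct on the number of layers, or more cleanly, iterate the single-layer argument: fixing a rational component $R$ in layer $L_k$ with its node $P$ toward $L_{k-1}$ and its other nodes $Q_1,\dots,Q_s$ toward $L_{k+1}$, the residue theorem on $R$ gives $\res^k_{P,R}(f_n\phi_n|_V) = -\sum_j \res^k_{Q_j,R}(f_n\phi_n|_V)$, and then Lemma \ref{invert} (applied in the twisted setting $\O_S(-u)$, where the sign still holds since the coordinate change at the node is the same) converts each right-hand term into a residue along the component of $L_{k+1}$ on the other side. Summing over all rational components in $L_k$ telescopes the sum of outer-layer residues down to the sum of residues at the nodes between $L_0$ and $L_1$; one more application at that final node gives the stated minus sign with residues taken along $L_0$. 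I should be slightly careful that the twist $\lambda$ is constant on each layer so that $\omega_C(-\lambda)$ restricted to each $R$ is genuinely $\omega_R$ tensored with a pullback from $S$, which is exactly what lets me apply the residue theorem component-by-component after trivializing the twist.

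The main obstacle I anticipate is bookkeeping rather than conceptual: making sure the residue theorem on each rational component is legitimately applicable, i.e., that $f_n\phi_n|_{R}$ really is a meromorphic section of $\omega_R$ over the artinian base with poles supported only at the nodes, and that there is no spurious residue contribution at a marked point or at a node where $R$ meets something within the same layer (centrally aligned curves have tree-like structure outside the core, so distinct rational components in the same layer are not adjacent, but this should be stated). A secondary subtlety is verifying that Lemma \ref{invert} transfers verbatim to the line-bundle-valued residues $\res^i$; since both branches at the node carry the same twist and the node-smoothing relation $xy=t_i$ is unchanged, the computation in the proof of Lemma \ref{invert} goes through after tensoring with $\O_S(-u)$, but I would spell this out. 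Once these points are nailed down, the telescoping is formal.
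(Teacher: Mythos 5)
Your telescoping strategy — walking inward layer by layer, applying the residue theorem (Lemma \ref{res_thm}) on each rational component and the node sign-flip (Lemma \ref{invert}) at each node — is conceptually the same as the paper's proof, which combines exactly those two lemmas and iterates from $L_m$ down to $L_0$. However, there are two concrete flaws.

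First, your claim that ``both branches at the node carry the same twist'' is false, and the argument built on it does not go through. A node $P$ between layers $L_k$ and $L_{k+1}$ has branches at different values of $\lambda$: the $L_k$-branch has $\lambda = u_k$ while the $L_{k+1}$-branch has $\lambda = u_{k+1} = u_k + t_{k+1}$. Consequently the twisted residues $\res^k_{P,\cdot}$ and $\res^{k+1}_{P,\cdot}$ take values in distinct modules $\O_S(-u_k)$ and $\O_S(-u_{k+1})$, and the equation ``$\res^k_{P,X} = -\res^{k+1}_{P,Y}$'' is not even a well-posed statement. The change of twist across the node is precisely what the central alignment encodes, so it cannot be ignored. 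The lemma statement and the paper's proof work with the untwisted residues $\res$, which all land in $A = \O_S$ and for which Lemma \ref{invert} applies directly with no modification; that is the correct setting for the telescoping argument.

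Second, your base case $m=1$ is not quite right. When $m = 1$ the nodes $p_1,\dots,p_n$ and $q_1,\dots,q_l$ are literally the same set of nodes, and each outer curve $R_i \subset L_1$ meets only $L_0$, so there is no ``node on the other end of $R_i$.'' The $m=1$ case is a single, term-by-term application of Lemma \ref{invert} alone, with no appeal to Lemma \ref{res_thm}: $\res_{p_i,R_i}(f_n\phi_n) = -\res_{p_i,L_0}(f_n\phi_n)$ and summing over $i$ is the whole statement. The residue theorem only enters when you must pass through an intermediate layer. If you rewrite the argument using $\res$ throughout and fix the base case, the rest of your telescoping is essentially the paper's argument.
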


\begin{proof}
    Combining Lemmas \ref{invert} and \ref{res_thm}, we see that if $P_1,...,P_k$ are the nodes connecting layer $L_{m-1}$ to $L_{m-2}$, say node $P_j$ is on the curve $Y_j$ in $L_{m-1}$, then 
    \[
    \sum_{p_i} \res_{p_i,R_i}(f_n\phi_n|_V) = -\sum_{p_i} \res_{p_i,Y_i}(f_n\phi_n|_V) =  \sum_{P_j} \res_{P_j,Y_j}(f_n\phi_n|_V).
    \]
    We can then repeat this for the nodes connecting $L_{m-2}$ to $L_{m-3}$, and so forth, until we hit the core $L_0$, where the corresponding equation becomes
    \[
    \sum_{p_i} \res_{p_i,R_i}(f_n\phi_n|_V) = -\sum_{q_i} \res_{q_i,L_0}(f_n\phi_n|_V),
    \]
    as desired.
\end{proof}

Before moving on to the main theorem, we provide an example to illustrate the idea of part of the proof and motivate the statement (and, hopefully, the definition of $\widebar{C}$). 

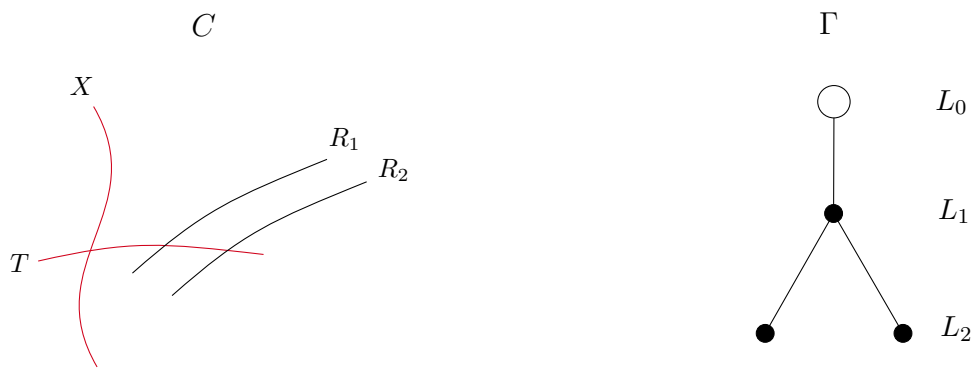
\begin{figure}
    \centering
    \begin{tikzpicture}[x=0.75pt,y=0.75pt,yscale=-1,xscale=1]

\draw [color={rgb, 255:red, 208; green, 2; blue, 27 }  ,draw opacity=1 ]   (94.62,53.01) .. controls (125.17,105.18) and (65.83,132.27) .. (96.38,184.45) ;
\draw [color={rgb, 255:red, 208; green, 2; blue, 27 }  ,draw opacity=1 ]   (66.67,131) .. controls (113.67,120.33) and (128.33,121.67) .. (180.33,127.67) ;
\draw [color={rgb, 255:red, 0; green, 0; blue, 0 }  ,draw opacity=1 ]   (114.2,137.1) .. controls (150.33,105.2) and (163.84,99.34) .. (212.34,79.66) ;
\draw [color={rgb, 255:red, 0; green, 0; blue, 0 }  ,draw opacity=1 ]   (134.2,148.44) .. controls (170.33,116.54) and (183.84,110.67) .. (232.34,91) ;
\draw   (459.83,50.58) .. controls (459.83,46.07) and (463.49,42.42) .. (468,42.42) .. controls (472.51,42.42) and (476.17,46.07) .. (476.17,50.58) .. controls (476.17,55.09) and (472.51,58.75) .. (468,58.75) .. controls (463.49,58.75) and (459.83,55.09) .. (459.83,50.58) -- cycle ;
\draw  [fill={rgb, 255:red, 0; green, 0; blue, 0 }  ,fill opacity=1 ] (463.33,107) .. controls (463.33,104.51) and (465.35,102.5) .. (467.83,102.5) .. controls (470.32,102.5) and (472.33,104.51) .. (472.33,107) .. controls (472.33,109.49) and (470.32,111.5) .. (467.83,111.5) .. controls (465.35,111.5) and (463.33,109.49) .. (463.33,107) -- cycle ;
\draw  [fill={rgb, 255:red, 0; green, 0; blue, 0 }  ,fill opacity=1 ] (498.33,167.5) .. controls (498.33,165.01) and (500.35,163) .. (502.83,163) .. controls (505.32,163) and (507.33,165.01) .. (507.33,167.5) .. controls (507.33,169.99) and (505.32,172) .. (502.83,172) .. controls (500.35,172) and (498.33,169.99) .. (498.33,167.5) -- cycle ;
\draw  [fill={rgb, 255:red, 0; green, 0; blue, 0 }  ,fill opacity=1 ] (428.83,167.5) .. controls (428.83,165.01) and (430.85,163) .. (433.33,163) .. controls (435.82,163) and (437.83,165.01) .. (437.83,167.5) .. controls (437.83,169.99) and (435.82,172) .. (433.33,172) .. controls (430.85,172) and (428.83,169.99) .. (428.83,167.5) -- cycle ;
\draw    (468,58.75) -- (467.83,107) ;
\draw    (467.83,107) -- (433.33,167.5) ;
\draw    (467.83,107) -- (502.83,167.5) ;

\draw (142.67,5) node [anchor=north west][inner sep=0.75pt]   [align=left] {$\displaystyle C$};
\draw (51.33,126.33) node [anchor=north west][inner sep=0.75pt]  [font=\footnotesize] [align=left] {$\displaystyle T$};
\draw (81.33,36.67) node [anchor=north west][inner sep=0.75pt]  [font=\footnotesize] [align=left] {$\displaystyle X$};
\draw (212,62.67) node [anchor=north west][inner sep=0.75pt]  [font=\footnotesize] [align=left] {$\displaystyle R_{1}$};
\draw (236.67,78) node [anchor=north west][inner sep=0.75pt]  [font=\footnotesize] [align=left] {$\displaystyle R_{2}$};
\draw (459.33,4) node [anchor=north west][inner sep=0.75pt]   [align=left] {$\displaystyle \Gamma $};
\draw (518,43.33) node [anchor=north west][inner sep=0.75pt]  [font=\small] [align=left] {$\displaystyle L_{0}$};
\draw (520.5,157.67) node [anchor=north west][inner sep=0.75pt]  [font=\small] [align=left] {$\displaystyle L_{2}$};
\draw (519.33,98.5) node [anchor=north west][inner sep=0.75pt]  [font=\small] [align=left] {$\displaystyle L_{1}$};

\end{tikzpicture}

    \caption{The curve $C$ and its associated level graph $\Gamma$.}
    \label{fig:1}
\end{figure}

\begin{example}\label{mittag-leffler}
    Set $A = k[u]/(u^3)$ and let $C \to \Spec(A)$ be the curve with $C_0$ consisting of a genus one curve, $X$, attached to a single rational curve, $T$, that is attached to two more rational curves, $R_1$ and $R_2$, and assume that every node has smoothing parameter equal to $u$. See Figure \ref{fig:1} for the curve and its associated level graph $\Gamma$. Let $U$ be a neighborhood of the contracted subcurve, and let $q$ be the node connecting $X$ and $T$. We will examine the condition a function $f \in \O_{C_0}(U)$ must satisfy if it lifts to a function $\tilde{f} \in \O_C(U)$. 

    Assume we have such a lift $\tilde{f}$. At the outer nodes $p_1$ and $p_2$, let us write
    \[
    f_{p_i} = c + c_1^ix_i + \cdots.
    \]
    The stalks of $\tilde{f}$ must be lifts of these, so we can write
    \[
    \tilde{f}_{p_i} = (c + du + eu^2) + (c_1^i + d_1^iu + e_1^iu^2)x_i + \cdots,
    \]
    for some $d,e,d_j^i,e_j^i \in k$. Also, for $\phi \in H^0(C_0,\omega_{C_0}(-\lambda))$ and a lift $\tilde{\phi} \in H^0(C,\omega_C(-\lambda))$, we can write
    \begin{align*}
        \phi_{p_i}|_{R_i} &= [u^2](\gamma_{-2}^ix_i^{-2} + \cdots)\,dx_i \\
        \tilde{\phi}_{p_i}|_{R_i} &= [u^2][(\gamma_{-2}^i + \alpha^iu + \beta^iu^2)x_i^{-2} + \cdots]\,dx_i
    \end{align*}
    for some $\alpha^i,\beta^i \in k$. In order for $\tilde{f}$ to be a lift of $f$, the lifts of the stalks must be compatible with the lifts of $f$ on any component of $C_0$ with the nodes removed. Lifting to the rational components is always possible, so the only condition can come from lifting to $X \setminus \{q\}$. 

    Computing and summing the residues of $\tilde{f}\tilde{\phi}$ in $\O(-\delta)$ at $p_1$ and $p_2$, we get 
    \[
    [u^2][(\gamma_{-2}^1 + \alpha^1u + \beta^1u^2)(c_1^1 + d_1^1u + e_1^1u^2) + (\gamma_{-2}^2 + \alpha^2u + \beta^2u^2)(c_1^2 + d_1^2u + e_1^2u^2)].
    \]
    From Lemma \ref{node_pass}, the total residue on $X$ is the negative of the image of this sum under the map to $\O$, and since $X$ is a genus one curve, a compatible lift of $f$ to $\O_C(X \setminus \{q\})$ exists if and only if this residue is 0. This is the Mittag-Leffler problem, which states that, given a set of principal parts of a meromorphic function at points on a curve, there exists a global meromorphic function with these principal parts exactly when the sum of the residues of the principal parts paired with every global differential on the curve is zero. For a detailed explanation, see, e.g., \cite{acgh1} pgs. 13-15. Expanding the products in the sum, we get
    \[
    [u^2][(\gamma_{-2}^1c_1^1 + \gamma_{-2}^2c_1^2) + au + bu^2],
    \]
    where $a$ and $b$ are sums containing the other coefficients. Now since $u^3 = 0$ in $A$, the image of this in $\O$ will just be
    \[
    [u^2](\gamma_{-2}^1c_1^1 + \gamma_{-2}^2c_1^2).
    \]
    Thus for $f$ to lift, it must satisfy
    \[
    u^2(\gamma_{-2}^1c_1^1 + \gamma_{-2}^2c_1^2) = 0,
    \]
    which is equivalent to 
    \[
    \gamma_{-2}^1c_1^1 + \gamma_{-2}^2c_1^2 = 0,
    \]
    since if $\gamma_{-2}^1c_1^1 + \gamma_{-2}^2c_1^2 \neq 0$, then it will not be in the annihilator of $u^2$. Now we see that this is exactly the condition for $f$ to descend to $\widebar{C_0}$. 
\end{example}

\begin{theorem}\label{main}
    Let $R$ be a local noetherian ring complete with respect to its maximal ideal $\mm$, $S = \Spec(R)$, and $S_n = \Spec(R/\mm^{n+1})$. Let $C \to S$ be a family of centrally aligned genus one curves and set $C_n = C \times_S S_n$. Assume $\O_C(-\lambda) \to \O_C$ is injective (e.g., $C$ is a smoothing of $C_0$).
    
    Let $U \subset \widebar{C_n}$ be an open subset, $V = \tau_n^{-1}(U)$, and $f_n \in \O_{C_n}(V)$. The following are equivalent:
    \begin{enumerate}
        \item $f_n$ is in the image of $\O_{C_m}(V) \to \O_{C_n}(V)$ for all $m \geq n$;
        \item there exist compatible lifts $f_m \in \O_{C_m}(V)$ of $f_n$ for all $m \geq n$;
        \item $f_n$ is in $\O_{\widebar{C_n}}(U)$;
        \item $f_n$ satisfies the residue condition: $\sum_{p_i} \res^m_{p_i,R_i}(f_n\phi_n|_V) = 0 \in \O_{S_n}(-\delta)$.
    \end{enumerate}
\end{theorem}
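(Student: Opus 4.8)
The implications fall into an easy part and a genuine cycle. The plan is to prove $(2)\Rightarrow(1)$, $(3)\Leftrightarrow(4)$, then $(3)\Rightarrow(2)$, and finally $(1)\Rightarrow(3)$; only the last two carry content. The equivalence $(3)\Leftrightarrow(4)$ is essentially Definition \ref{contraction_def}: by construction $\O_{\widebar{C_n}}(U)$ is the subring of $\O_{C_n}(V)$ cut out by $\res^m(\,\cdot\,\phi)=0$ for a chosen generator $\phi$ of $H^0(C_n,\omega_{C_n}(-\lambda))$, and as recorded in the paragraph before Lemma \ref{res_lift} any two such generators differ by a unit of $R/\mm^{n+1}$, so the condition is independent of the choice and coincides with the residue condition stated using $\phi_n$. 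The implication $(2)\Rightarrow(1)$ is immediate, since compatible lifts in particular exhibit $f_n$ as the image of a section at each level $m$.

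For $(3)\Rightarrow(2)$ I would prove that the restriction map $\O_{\widebar{C_{m+1}}}(U)\to\O_{\widebar{C_m}}(U)$ is surjective for every $m\geq n$; iterating then produces a compatible system $\{f_m\}_{m\geq n}$ with $f_m\in\O_{\widebar{C_m}}(U)\subseteq\O_{C_m}(V)$, which is $(2)$. Given $f_m\in\O_{\widebar{C_m}}(U)$, so $\res^m(f_m\phi_m)=0$, the kernel of $\O_{C_{m+1}}\to\O_{C_m}$ is $\mm^{m+1}/\mm^{m+2}\otimes_k\O_{C_0}$ by flatness, so the obstruction to lifting $f_m$ to some $g\in\O_{C_{m+1}}(V)$ lies in $H^1(V,\mm^{m+1}/\mm^{m+2}\otimes\O_{C_0})$. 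Lifting is unobstructed along the rational components of the layers, so by Mayer--Vietoris the obstruction is controlled by the genus-one core $L_0$, where it is the classical Mittag--Leffler obstruction, a sum of residues along $L_0$; by Lemma \ref{node_pass} that sum equals, up to sign, $\sum_{p_i}\res_{p_i,R_i}$ of a lift of $f_m$ paired with $\phi_{m+1}$, and the hypothesis $\res^m(f_m\phi_m)=0$ makes it vanish, so a lift $g\in\O_{C_{m+1}}(V)$ exists. By Lemma \ref{res_lift}, $\res^m(g\phi_{m+1})$ maps to $\res^m(f_m\phi_m)=0$ under $\O_{S_{m+1}}(-\delta)\to\O_{S_m}(-\delta)$, hence lies in $[t]\,\mm^{m+1}/\mm^{m+2}\subseteq[t]\,\mathrm{Ann}_{R/\mm^{m+2}}(t)$ (the inclusion since $t\cdot\mm^{m+1}\subseteq\mm^{m+2}$). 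Applying the splitting of Lemma \ref{splitting} at level $m+1$ yields $h\in\O_{C_{m+1}}(V)$ with $\res^m(h\phi_{m+1})=\res^m(g\phi_{m+1})$ whose local expansions have coefficients in $\mm^{m+1}$, hence $h$ restricts to $0$ in $\O_{C_m}(V)$; then $f_{m+1}:=g-h$ lies in $\O_{\widebar{C_{m+1}}}(U)$ and restricts to $f_m$. This proves the surjectivity.

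For $(1)\Rightarrow(3)$, suppose $f_n$ lifts to some $g_m\in\O_{C_m}(V)$ for every $m\geq n$. By Lemma \ref{splitting} at level $m$, $\res^m(g_m\phi_m)\in[t]\,\mathrm{Ann}_{R/\mm^{m+1}}(t)$, and by Lemma \ref{res_lift} (applied with the artinian quotient $R/\mm^{m+1}$) its image in $\O_{S_n}(-\delta)$ is $\res^m(f_n\phi_n)$. Hence $\res^m(f_n\phi_n)$ lies, for every $m$, in the image of $[t]\,\mathrm{Ann}_{R/\mm^{m+1}}(t)$ under $\O_{S_m}(-\delta)\to\O_{S_n}(-\delta)$. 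By the Artin--Rees lemma these images stabilize as $m\to\infty$, and the hypothesis that $\O_C(-\lambda)\to\O_C$ is injective forces the stable value to be $0$ (it makes the induced map $\O_S(-\delta)\to\O_S$ injective, i.e. the smoothing parameter $t$ of the central circle a nonzerodivisor, so the relevant annihilator vanishes). Therefore $\res^m(f_n\phi_n)=0\in\O_{S_n}(-\delta)$, which is $(4)$, equivalently $(3)$. Completeness of $R$ is what licenses the passage $m\to\infty$, since it gives $\O_S(-\delta)=\varprojlim_m\O_{S_m}(-\delta)$.

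The main obstacle is the lifting step inside $(3)\Rightarrow(2)$: identifying the cohomological obstruction in $H^1(V,\mm^{m+1}/\mm^{m+2}\otimes\O_{C_0})$ with (a reduction of) the residue sum. This requires the Mayer--Vietoris decomposition of $V$ into its genus-one core and its rational layers, the vanishing of the residue of $\phi$ along the outward branches at the outer nodes combined with Lemma \ref{node_pass} to push all residue bookkeeping onto $L_0$, and a careful accounting of how the twist by $-\lambda$ and the parameter $t$ enter, so that the obstruction is precisely the reduction of $\res^m(f_m\phi_m)$ rather than some unrelated multiple of it. The worked Example immediately preceding the theorem is the template for this computation, and also illustrates why the injectivity hypothesis $\O_C(-\lambda)\hookrightarrow\O_C$ is needed in the converse direction $(1)\Rightarrow(3)$.
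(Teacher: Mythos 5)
Your proof is correct and, for the implication $(3)\Rightarrow(2)$ (the paper's $(iv)\Rightarrow(ii)$), follows essentially the same strategy as the paper: lift to $\O_{C_{m+1}}(V)$ by combining the obstruction calculation on the core with Lemma \ref{node_pass}, then correct the lift using the splitting of Lemma \ref{splitting} so that it again satisfies the residue condition at level $m+1$. The paper does this in explicit coordinates and chooses the correction term by adjusting the coefficients $u^i, d_1^i$ directly; your invocation of the splitting is a cleaner packaging of the same correction, and your observation that the correction term has coefficients in $\mm^{m+1}$ and therefore restricts to zero on $C_m$ is the key point in both accounts.

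The interesting divergence is in $(1)\Rightarrow(3)$ (the paper's $(i)\Rightarrow(iv)$). The paper fixes a single carefully chosen level $N$ --- the ``order of vanishing'' of the element $t a$ where $\res^m(f_n\phi_n) = [t]a$ --- lifts to $R/\mm^{N+1}$, and uses Lemma \ref{node_pass} and the residue theorem on the core to force $t\tilde a = 0$, which contradicts the minimality of $N$. Your version instead uses the lifts at every level $m$: Lemma \ref{splitting} puts $\res^m(g_m\phi_m)$ into $[t]\,\mathrm{Ann}_{R/\mm^{m+1}}(t)$, Lemma \ref{res_lift} sends it down to $\res^m(f_n\phi_n)$, and you conclude via Artin--Rees (together with the nonzerodivisor property of $t$, which is what the hypothesis on $\O_C(-\lambda)\hookrightarrow\O_C$ encodes) that the only element of $R/\mm^{n+1}$ that admits such annihilator lifts at every level is zero. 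This buys two things. First, it makes explicit where completeness of $R$ and the injectivity hypothesis on $\O_C(-\lambda)$ are used, whereas in the paper's version these hypotheses enter somewhat tacitly (the intersection $\bigcap_N \mm^N = 0$ behind the ``no such $N$ exists'' step, and the passage from $t\tilde a = 0$ back to $a = 0$ in $R/\mm^{n+1}$). Second, your argument sidesteps the slight awkwardness in the paper's phrasing that $t a$ must be considered as an element of $R$ via a representative, which otherwise invites the worry that the chosen level $N$ depends on that representative. Your route is therefore not just correct but arguably the more transparent one for this implication.

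One place to be slightly more careful in the write-up: when you invoke the splitting to produce $h$ with local expansions in $\mm^{m+1}$ and conclude that $h$ restricts to zero in $\O_{C_m}(V)$, what you are really using is that the splitting map $[t]\mathrm{Ann}(t) \to \O_C(V)$ of Lemma \ref{splitting} is $R/\mm^{m+2}$-linear, so sending an input in $\mm^{m+1}\cdot\mathrm{Ann}(t)$ lands in $\mm^{m+1}\O_{C_{m+1}}(V)$, which is precisely the kernel of $\O_{C_{m+1}}(V)\to\O_{C_m}(V)$ by flatness. Stating it this way avoids any worry about whether the local expansions at $p_1, \dots, p_n$ control the function on all of $V$.
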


\begin{proof}
    We clearly have (ii) implies (i), and (iii) and (iv) are equivalent by definition of $\widebar{C_n}$. We will first show (i) implies (iv), then finish by showing (iv) implies (ii).

    We begin with (i) implies (iv). If $\tau_n(E) \not\in U$, then $\O_{\widebar{C_n}}(U) = \O_{C_n}(V)$, so (i) clearly implies (iv) in this case. 
    
    Now assume $\tau_n(E) \in U$. Let
    \[
    \sum_{p_i}\res^m_{p_i,R_i}(f_n\phi_n|_V) = [t]a
    \]
    for some $a \in R/\mm^{n+1}$.
    Let $N \geq n$ be the smallest integer such that $ta \in \mm^N$, but $ta \not\in \mm^{N+1}$. Note that we could have $N=n$. From Lemma \ref{res_lift}, we know that 
    \[
    \sum_{p_i} \res^m_{p_i,R_i}(f_N\phi_N|_V) = [t]\Tilde{a} + [t]G,
    \]
    where $G \in \mm^N$ and $\Tilde{a}$ is a lift of $a$ to $R/\mm^{N+1}$. Then we have
    \[
    \sum_{p_i} \res_{p_i,R_i}(f_N\phi_N|_V) = t\Tilde{a} \in R/\mm^{N+1},
    \]
    since $tG \in \mm^{N+1}$. Now applying Lemma \ref{node_pass} with $q_i$ as in said lemma, we have
    \[
    \sum_{q_i} \res_{q_i,L_0}(f_N\phi_N|_V) = -t\Tilde{a},
    \]
    and since this is the total residue of $f_N\phi_N$ on $L_0$, we must have $t\Tilde{a} = 0$, from which we deduce that $\Tilde{a} = 0$, since if $\tilde{a} \neq 0$, then $\Tilde{a} \not\in \text{Ann}(t)$ (this follows from the injectivity of $\O_C(-\lambda) \to \O_C$). Restricting back to $S_n$, we conclude that $a = 0 \in R/\mm^{n+1}$, so $ta = 0$, as desired.
    
    We now address (iv) implies (ii). Let $J = \ker(R/\mm^{n+2} \to R/\mm^{n+1})$. Since $C_{n+1}$ is an infinitesimal deformation of $C_n$, we have a short exact sequence
    \[
    0 \to \O_{C_{n+1}} \otimes J \to \O_{C_{n+1}} \to \O_{C_n} \to 0.
    \]
    By flatness of the maps $C_k \to S_k$, we see that $\O_{C_{n+1}} \otimes J = \O_{C_0} \otimes J$. Now this sequence induces a long exact sequence in cohomology, and we see that the obstruction to lifting $f_n$ to a function $f_{n+1}$ is found in $H^1(C_0,\O_{C_0} \otimes J)$. When $\tau_n(E) \not\in U$, $V$ is a collection of rational curves, so $H^1(V,\O_{C_0} \otimes J|_V) = 0$ and we always have a lift. We can then repeat this on $C_{n+1}$ to find a compatible lift to $C_{n+2}$, then repeat to get a lift to any $C_m$ for $m \geq n$. 

    Now we examine the case when $\tau_n(E) \in U$. We will approach this by showing that if $f_n$ satisfies (iv), then we can find a lift $f_{n+1}$ that also satisfies (iv). We can then inductively lift $f_{n+1}$ to $f_{n+2}$ also satisfying (iv), and so forth to any order. 
    
    We first make a reduction to the problem. If we let $X$ be a component of $C_{n+1}$ with nodes $r_1,...,r_k$, then we claim that finding a lift of $f_n$ to $X$ is equivalent to finding compatible lifts of $f_n|_{X\setminus\{r_1,...,r_k\}}$ and $(f_n)_{r_i} \in (\O_{C_n})_{r_i}$. This follows from Zariski gluing, since viewing $(f_n)_{r_i}$ as a function in a suitably small neighborhood $W_i$ of $r_i$, we see that $X\setminus\{r_1,...,r_k\}$ and $W_1,...,W_k$ form a cover of $X$. Now we can always find lifts to $X \setminus \{r_1,...,r_k\}$, since $X \setminus \{r_1,...,r_k\}$ is affine, and $(\O_{C_{n+1}})_{r_i} \to (\O_{C_{n}})_{r_i}$ is surjective, so we can also find a lift to $(\O_{C_{n+1}})_{r_i}$. However, these lifts must be compatible, and the lift to $(\O_{C_{n+1}})_{r_i}$ can potentially force the lift to $X \setminus \{r_1,...,r_k\}$ to have a zero or pole at $r_i$. Thus we see that finding a lift is equivalent to finding a rational function on $X$ with prescribed zeroes or poles at the nodes. As we saw above, this can always be done when the component is rational. Furthermore, since $C_{n+1}$ has genus one, the dual graph of $C_{n+1}$ minus the vertices corresponding to the core forms a tree. Thus if we think of finding a lift as first lifting to layer $L_m$, then layer $L_{m-1}$, and so forth, we see that the only obstruction to lifting arises from lifting to $L_0$. This is the obstruction in $H^1$ previously discussed.

    Let us now compute this obstruction and show that we can always find a lift when $f_n \in \O_{\widebar{C_n}}(U)$. In a similar fashion to the proof of Lemma \ref{res_lift}, we compute in local coordinates. At each $p_i$, with $x_i$ the local coordinate for $R_i$, we write
     \[
    (\phi_n)_{p_i}|_{R_i} = [t](\gamma_{-2}^{i}x_{i}^{-2} + \gamma_0^i + \cdots)\,dx_i
    \]
    \[
    (f_n)_{p_i} = c^i + c_1^ix_i + \cdots
    \]
    from which we compute
    \[
    \res^m_{p_i,R_i}(f_n\phi_n|_V) = [t]\gamma_{-2}^ic_1^i.
    \]
    Note that $\gamma^i_{-2}$ is a unit, since its restriction to the special fiber $C_0$ is a unit, so $\gamma^i_{-2}$ is a unit plus nilpotents, which is a unit. Again as in Lemma \ref{res_lift}, since $f_{n+1}$ and $\phi_{n+1}$ are lifts of $f_n$ and $\phi_n$, respectively, we can write
    \[
    (\phi_{n+1})_{p_i}|_{R_i} = [t][(\gamma_{-2}^i+v_{-2}^i)x_i^{-2} + (\gamma_0^i + v_0^i) + \cdots]\,dx_i
    \]
    \[
    (f_{n+1})_{p_i} = (c^i + u_0^i) + (c_1^i + u_1^i)x_i + \cdots
    \]
    where $u_j^i,v_j^i \in \mm^{n+1}$. From this we compute
    \begin{align*}
        \res^m_{p_i,R_i}(f_{n+1}\phi_{n+1}|_V) &= [t](\gamma_{-2}^ic_1^i + u_1^i\gamma_{-2}^i + v_{-2}^ic_1^i + u_1^iv_{-2}^i) \\
        &= [t]\gamma_{-2}^ic_1^i + t(u_1^i\gamma_{-2}^i + v_{-2}^ic_1^i + u_1^iv_{-2}^i).
    \end{align*}
    Now using Lemma \ref{node_pass}, we can compute
    \[
    \sum_{q_i} \res_{q_i,L_0}(f_{n+1}\phi_{n+1}|_V) = -t\sum_{p_i} \gamma_{-2}^ic_1^i,
    \]
    where the other terms in the residue vanish since $tu_1^i,tv_{-2}^i \in \mm^{n+2}$. We then have principal parts on $L_0$ and want to know if these principal parts come from a rational function -- this is the Mittag-Leffler problem (see Example \ref{mittag-leffler}). Thus to find our rational function on $L_0$ we only need $t\sum_{p_i} \gamma_{-2}^ic_1^i = 0 \in \O_{S_{n+1}}$, which is satisfied by assumption. Therefore, we can find a lift $f_{n+1}$. Furthermore, we can choose $u_1^i$ to satisfy $u_1^i\gamma_{-2}^i + v_{-2}^ic_1^i + u_1^iv_{-2}^i = 0$ for each $i$ by choosing $u^i = -v_{-2}^ic_1^i(\gamma_{-2}^i + v_{-2}^i)^{-1}$ (note that $\gamma_{-2}^i + v_{-2}^i$ is a unit plus a nilpotent, so is a unit), so we can choose the lift $f_{n+1}$ to satisfy (iv). Hence we can find compatible lifts of $f_n$ to $f_m$ for any $m \geq n$.
\end{proof}

\subsection{Properties of the Contraction}

\begin{proposition}
    Let $B = A/\mm^i$ be a quotient of a local artinian ring $A$ by a power of its maximal ideal, $T = \Spec(B)$, and $C_{T} = C \times_S T$. Then $\widebar{C}_{T} \simeq \widebar{C} \times_S T$, i.e., the following diagram is cartesian
    \begin{center}
        \begin{tikzcd}
\widebar{C}_{T} \arrow[d] \arrow[r] & \widebar{C} \arrow[d] \\
T \arrow[r]                         & S                    
\end{tikzcd}
    \end{center}
\end{proposition}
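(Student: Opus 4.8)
The plan is to show that the natural morphism $\widebar{C}_T \to \widebar{C}\times_S T$ is an isomorphism. First I would note that $\widebar{C}_T$, $\widebar{C}\times_S T$, and $\widebar{C}$ all have underlying topological space the topological contraction $C'$ of the central fibre, since $T \to S$ is a nilpotent closed immersion of one-point schemes; moreover $\O_{\widebar{C}\times_S T} = \O_{\widebar{C}}\otimes_A B$. There is a morphism $\widebar{C}_T \to \widebar{C}$ over $T \to S$ which is the identity on $C'$ and, on structure sheaves, is the restriction $\O_{\widebar{C}} \to \O_{\widebar{C}_T}$ coming from the closed immersion $C_T = C\times_S T \hookrightarrow C$: it is well defined because, choosing a generator $\phi$ of $H^0(C,\omega_C(-\lambda))$ whose image $\bar\phi$ generates $H^0(C_T,\omega_{C_T}(-\lambda))$ (possible by the base-change identifications $\omega_{C_T}(-\lambda)\simeq F^*\omega_C(-\lambda)$ and $H^0(C_T,\omega_{C_T}(-\lambda))\simeq H^0(C,\omega_C(-\lambda))\otimes_A B$ recorded just before Lemma \ref{res_lift}), Lemma \ref{res_lift} shows $\res^m(f\phi)$ maps to $\res^m(\bar f\bar\phi)$ under $\O_S(-\delta)\to\O_T(-\delta)$, so a function killed by $\res^m$ restricts to one killed by $\res^m$. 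Together with the projection $\widebar{C}_T \to T$ this gives the comparison morphism $\widebar{C}_T \to \widebar{C}\times_S T$; it is the identity on underlying spaces, and it remains to check it is an isomorphism of structure sheaves.

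Next I would reduce to the stalk at the contracted point $P = \tau'(E)$: away from $P$ the maps $\tau'$, $\tau'_T$ are isomorphisms, $\O_{\widebar{C}}$ and $\O_{\widebar{C}_T}$ agree with $\O_C$ and $\O_{C_T}$, and $C\times_S T = C_T$, so the square is trivially cartesian there. At $P$, exactness of filtered colimits gives $\O_{\widebar{C},P} = \ker(\res^m\colon \mathcal A \to \O_S(-\delta))$ with $\mathcal A = (\tau'_*\O_C)_P$, and likewise $\O_{\widebar{C}_T,P} = \ker(\res^m\colon\mathcal A_T \to \O_T(-\delta))$ with $\mathcal A_T = (\tau'_*\O_{C_T})_P$; by Lemma \ref{res_lift} together with $\O_T(-\delta)\simeq\O_S(-\delta)\otimes_A B$, this second residue map is the base change of the first along $A \to B$ composed with the natural map $\mathcal A\otimes_A B \to \mathcal A_T$. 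By Lemma \ref{splitting}, near $P$ the sequence $0 \to \O_{\widebar{C}} \to \tau'_*\O_C \xrightarrow{\res^m} [t]\,\mathrm{Ann}_A(t) \to 0$ is $A$-linearly split; the splittings are compatible with restriction, so they pass to the colimit and exhibit $\O_{\widebar{C},P}$ as an $A$-module direct summand of $\mathcal A$. Tensoring with $B$ keeps this sequence split exact, which both identifies $\O_{\widebar{C},P}\otimes_A B = \ker(\mathcal A\otimes_A B \to [t]\,\mathrm{Ann}_A(t)\otimes_A B)$ and shows that $\O_{\widebar{C},P}\otimes_A B \to \mathcal A_T$ is injective. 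Since $\O_{\widebar{C}\times_S T,P} = \O_{\widebar{C},P}\otimes_A B$, we are reduced to proving that the natural map $\O_{\widebar{C},P}\otimes_A B \to \O_{\widebar{C}_T,P}$ is bijective.

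Surjectivity is the lifting assertion of Theorem \ref{main} (assuming, as there, that $\O_C(-\lambda)\to\O_C$ is injective), applied with $R = A$: writing $T = S_{i-1}$, a function near $P$ on $C_T$ satisfying the residue condition lifts, by the equivalence of conditions (i)--(iv), compatibly through the tower; since $\mm$ is nilpotent, for large level the tower is constant equal to $C$, and at that level condition (i) is automatic, so the lift satisfies the residue condition, i.e. lies in $\O_{\widebar{C}}$. Injectivity then amounts to the identity $\O_{\widebar{C},P}\cap\ker(\mathcal A \to \mathcal A_T) = \mm^i\O_{\widebar{C},P}$, which one extracts by comparing the splitting $\mathcal A = \O_{\widebar{C},P}\oplus\mathcal J$ (with $\mathcal J\cong[t]\,\mathrm{Ann}_A(t)$) with the analogous decomposition of $\mathcal A_T$ over $B$ and tracking that the obstruction to base change lands in the complementary summands. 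This injectivity/kernel step is where I expect the real work to lie: the point is that $\O_C$ itself does \emph{not} commute with the base change $A \to B$ over a neighbourhood of $E$ — the obstruction being $R^1\tau'_*$ of the relevant sheaves, which is nonzero precisely because $E$ has arithmetic genus one — whereas the subsheaf $\O_{\widebar{C}}$ does, because imposing the residue condition $\res^m(\,\cdot\,\phi) = 0$ annihilates exactly that genus-one cohomology. This reconciliation is the substance of Theorem \ref{main}, and Lemma \ref{splitting} supplies the remaining module-theoretic bookkeeping; with $\O_{\widebar{C},P}\otimes_A B \xrightarrow{\ \sim\ } \O_{\widebar{C}_T,P}$ in hand, the square is cartesian.
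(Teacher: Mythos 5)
Your reduction to the stalk at the contracted point, the use of Lemma \ref{splitting}'s split sequence, and the surjectivity argument via the lifting assertion of Theorem \ref{main} all match the paper's strategy. The problem is the injectivity half, where there is a genuine gap.

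In your second paragraph you assert that tensoring the split sequence with $B$ ``shows that $\O_{\widebar{C},P}\otimes_A B \to \mathcal A_T$ is injective.'' That does not follow: the splitting only gives injectivity of $\O_{\widebar{C},P}\otimes_A B \hookrightarrow \mathcal A \otimes_A B$. The further map $\mathcal A \otimes_A B \to \mathcal A_T$ is \emph{not} injective --- its kernel is exactly the failure of $\tau'_*\O_C$ to commute with the base change $A \to B$ near $P$, and you yourself identify this as nonzero because $E$ has arithmetic genus one. So the injectivity you need is precisely the statement that this kernel meets the summand $\O_{\widebar{C},P}\otimes_A B$ trivially. Your third paragraph acknowledges this but then disposes of it by ``tracking that the obstruction to base change lands in the complementary summands,'' which is not an argument; it is a restatement of what must be proved.

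The paper closes this gap via a reduction you omit: it first reduces to $B = A/I$ with $I \cong k$, by writing $A \to A/\mm^i$ as a composition of square-zero quotients of length one. With $I=(a)\cong k$ in hand one has $I\O_C \cong \O_{C_0}$, and the Snake Lemma applied to the two split rows produces an exact sequence $0 \to K_1 \to K_2 \to [t]\mathrm{Ann}_A(t)\otimes B \to 0$, where $K_2 = \ker\bigl(\O_C(V)\otimes B \to \O_{C_T}(V)\bigr)$. One then shows $K_2 \to [t]\mathrm{Ann}_A(t)\otimes B$ is injective: a class mapping to zero has a representative $\tilde f \in (I\O_C)(V) \cong \O_{C_0}(V)$ with zero residue, which by Theorem \ref{main} lifts to some $\tilde g \in \O_C(V)$, so $\tilde f = a\tilde g \in I\O_C(V)$ and the class vanishes. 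This forces $K_1 = 0$. Without the reduction to $I \cong k$ the identification $I\O_C \cong \O_{C_0}$ is unavailable, and I do not see how your summand-tracking claim could be carried out; this step needs to be supplied.
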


\begin{proof}
    First, we reduce to the case $B = A/I$, where $I \simeq k$. To do this, we first note that if $B = A/\mm^i$ and if $\mm^{n+1} = 0$ and $\mm^n \neq 0$, then we can write $B$ as a successive quotient of $A$, first quotienting by $\mm^n$, then $\mm^{n-1}$, and so on until reaching $\mm^i$. Thus, we can work with $A/\mm^n$. Now writing $\mm^n = (a_1,...,a_j)$, we can write $A/\mm^n$ as a successive quotient by $(a_1)$, then $(a_2)$, and so on until $(a_j)$. Since $\mm^{n+1} = 0$, we see that each $(a_i) \simeq k$, so we have the desired reduction to $B = A/I$ with $I \simeq k$. Assume $I = (a)$ for some $a \in A$.
    
    The underlying topological spaces of $\widebar{C}_{T}$ and $\widebar{C} \times_S T$ are the same, equal to $C'$, so we need only show that $\O_{\widebar{C}_{T}} = \O_{\widebar{C}} \otimes_{\O_S} \O_T$ as sheaves on $C'$. Away from the contracted point $P = \tau(E)$, this clearly holds, so we need only show that if $U \subset C'$ is an open set containing $P$, then $\O_{\widebar{C}_{T}}(U) = (\O_{\widebar{C}} \otimes_{\O_S} \O_T)(U)$. For this, it suffices to show $\O_{\widebar{C}_{T}}(U) = \O_{\widebar{C}}(U) \otimes_{A} B$. Set $V = (\tau')^{-1}(U)$.

    Using Lemma \ref{splitting}, we have the following commutative diagram with split exact rows:

    \begin{center}
        \begin{tikzcd}
0 \arrow[r] & \O_{\bar{C}}(U) \arrow[r] \arrow[d]           & \O_C(V) \arrow[r] \arrow[d]                              & {[t]}\mathrm{Ann}_A(t) \arrow[r] \arrow[d]                      & 0 \\
0 \arrow[r] & \O_{\bar{C}}(U) \otimes B \arrow[r] \arrow[d] & \O_C(V) \otimes B \arrow[r] \arrow[d] \arrow[ld, dashed] & {[t]}\mathrm{Ann}_A(t) \otimes B \arrow[r] \arrow[d, "0"] & 0 \\
0 \arrow[r] & \O_{\bar{C}_T}(U) \arrow[r]                 & \O_{C_T}(V) \arrow[r]                                    & {[t]}\mathrm{Ann}_B(t) \arrow[r]                                & 0
\end{tikzcd}
    \end{center}
    Our goal is to show that the bottom left vertical map $\O_{\bar{C}}(U) \otimes B \to \O_{\bar{C}_T}(U)$ is an isomorphism. Note that the bottom right vertical map is zero, as $t \in A$ is in some power of $\mm$, so the annihilator in $A$ will become zero upon quotienting by the power of $\mm$ to obtain $B$. 
    
    First, we note that we have a map $\O_C(V) \otimes B \to \O_{\bar{C}_T}(U)$ (dashed in the diagram). To see this, note that $\O_C(V) \to \O_C(V) \otimes B$ is surjective. Let $f \in \O_C(V) \otimes B$ and, by abuse of notation, we also denote a preimage in $\O_C(V)$ by $f$. If $f_T$ is the image of $f$ in $\O_{C_T}(V)$, then $f_T$ maps to zero in $[t]\mathrm{Ann}_B(t)$, as the composition from $\O_C(V) \otimes B \to [t]\mathrm{Ann}_B(t)$ is zero. Thus $f_T$ is actually in $\O_{\bar{C}_T}(U)$, i.e., the map $\O_C(V) \otimes B \to \O_{C_T}(V)$ factors through $\O_{\bar{C}_T}(U)$. By Theorem \ref{main}, any element of $\O_{\bar{C}_T}(U)$ has a lift to $\O_C(V)$, so this map is surjective. Since the maps between rows are maps of split exact sequences, we can see that the map $\O_{\bar{C}}(U) \otimes B \to \O_{\bar{C}_T}(U)$ must also be surjective.

    We now show that $\O_{\bar{C}}(U) \otimes B \to \O_{\bar{C}_T}(U)$ is injective to complete the proof. If we let 
    \begin{align*}
        K_1 &= \ker(\O_{\bar{C}}(U) \otimes B \to \O_{\bar{C}_T}(U)) \\
        K_2 &= \ker(\O_C(V) \otimes B \to \O_{C_T}(V))
    \end{align*}
    then from the Snake Lemma we have the following exact sequence
    \[
    0 \to K_1 \to K_2 \to [t]\mathrm{Ann}_A(t) \otimes B \to 0
    \]
    Therefore, if we can show that $K_2 \to [t]\mathrm{Ann}_A(t) \otimes B$ is injective, then $K_1 = 0$ and we are done. 
    
    Let $f \in K_2$ such that $f$ maps to zero in $[t]\mathrm{Ann}_A(t) \otimes B$. Denoting a preimage of $f$ in $\O_C(V)$ by $\tilde{f}$, we see that $\tilde{f} \in \ker(\O_C(V) \to \O_{C_T}(V))$ and $\tilde{f}$ maps to zero in $[t]\mathrm{Ann}_A(t)$. Thus $\tilde{f} \in (I\O_C)(V)$. We will show that we actually have $\tilde{f} \in I\O_C(V)$, so $f = 0 \in \O_C(V) \otimes B$. Since $I = (a) \simeq k$, we have $I\O_C \simeq \O_{C_0}$, so $\tilde{f}$ corresponds to a function $g \in \O_{C_0}(V)$. This function $g$ will have residue zero, since $\tilde{f}$ did, so by Theorem \ref{main}, $g$ lifts to a function $\tilde{g} \in \O_C(V)$. Now applying the multiplication by $a$ map to $\O_C(V)$, we have $\tilde{f} = a\tilde{g}$. Thus $\tilde{f} \in I\O_C(V)$, and we are done.
\end{proof}

\begin{lemma}\label{finite}
    The map $C' \to \widebar{C}$ is finite.
\end{lemma}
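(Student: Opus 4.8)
The plan rests on the fact that $C'$ and $\widebar{C}$ have the same underlying topological space, and that the morphism $C'\to\widebar{C}$ is the identity on spaces together with the inclusion of sheaves of rings $\O_{\widebar{C}}\hookrightarrow\tau'_*\O_C=\O_{C'}$. Finiteness therefore amounts to showing that $\O_{C'}$ is, locally on $\widebar{C}$, a finitely generated $\O_{\widebar{C}}$-module. This is a local question on $\widebar{C}$, and on the open locus where $\tau$ is an isomorphism we have $\O_{\widebar{C}}=\O_{C'}$, so there is nothing to check. Hence I only need to treat an open set $U$ containing the contracted point $P=\tau(E)$; writing $V=(\tau')^{-1}(U)\supseteq E$, the goal is to show that $\O_C(V)=\O_{C'}(U)$ is a finite $\O_{\widebar{C}}(U)$-module.

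First I would record the structure of $\O_{\widebar{C}}(U)$ inside $\O_C(V)$: by construction of $\widebar{C}$ (cf. the equivalence (iii)$\Leftrightarrow$(iv) of Theorem \ref{main}), $\O_{\widebar{C}}(U)$ is precisely the kernel of the map $f\mapsto\res^m(f\phi)=\sum_{p_i}\res^m_{p_i,R_i}(f\phi|_V)$ from $\O_C(V)$ to $\O_S(-\delta)$. Thus $\res^m$ induces an injection $\O_C(V)/\O_{\widebar{C}}(U)\hookrightarrow\O_S(-\delta)$. Since $S$ is local, $\O_S(-\delta)\simeq[t]\O_S$ is free of rank one over the noetherian ring $\O_S(S)$, so the quotient $\O_C(V)/\O_{\widebar{C}}(U)$, being a submodule of a finite module over a noetherian ring, is a finite $\O_S(S)$-module. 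Choose $g_1,\dots,g_r\in\O_C(V)$ whose images generate it over $\O_S(S)$.

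It then remains to see that $1,g_1,\dots,g_r$ generate $\O_C(V)$ over the subring $\O_{\widebar{C}}(U)$, i.e. that the $\O_S(S)$-module structure on $\O_C(V)/\O_{\widebar{C}}(U)$ is already induced from $\O_{\widebar{C}}(U)$. For this I would check that every constant function lies in $\O_{\widebar{C}}(U)$: recall (as in the local computations in the proofs of Lemma \ref{splitting} and Theorem \ref{main}) that at each node $p_i$ one has $\phi|_{R_i}=[t]\big(\gamma^i_{-2}x_i^{-2}+\gamma^i_0+\gamma^i_1x_i+\cdots\big)\,dx_i$ with no simple-pole term, so if $f$ is pulled back from $\O_S(S)$ then $f_{p_i}=c^i+0\cdot x_i+\cdots$ has vanishing linear coefficient and hence $\res^m_{p_i,R_i}(f\phi|_V)=[t]\gamma^i_{-2}\cdot 0=0$ for every $i$, giving $\res^m(f\phi)=0$. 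Therefore $\O_S(S)\to\O_C(V)$ factors through $\O_{\widebar{C}}(U)$, and
\[
\O_C(V)=\O_{\widebar{C}}(U)\cdot 1+\sum_{j=1}^{r}\O_S(S)\cdot g_j\;\subseteq\;\O_{\widebar{C}}(U)\cdot 1+\sum_{j=1}^{r}\O_{\widebar{C}}(U)\cdot g_j,
\]
so $\O_C(V)$ is module-finite over $\O_{\widebar{C}}(U)$, as needed. The step requiring the most care is the bookkeeping in this last paragraph — keeping the $\O_S(S)$- and $\O_{\widebar{C}}(U)$-module structures straight — together with confirming that "$\O_{\widebar{C}}(U)=\ker\res^m$" and the rank-one freeness of $\O_S(-\delta)$ apply in whatever generality the statement is meant (local artinian, or complete local noetherian); both do. Alternatively one could reduce to the artinian case by passing to $C_n=C\times_S S_n$ and invoking the base-change property of $\widebar{C}$ together with Lemma \ref{splitting}, but the argument above makes that detour unnecessary.
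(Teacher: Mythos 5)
Your proof is correct and takes essentially the same route as the paper: both identify $\O_C(V)/\O_{\widebar C}(U)$ with a submodule of $\O_S(-\delta)$, use noetherianness of $A$ to conclude this quotient is a finite $A$-module, and then conclude via $A\subset\O_{\widebar C}(U)$ and the short exact sequence. The only difference is that you verify $A\subset\O_{\widebar C}(U)$ explicitly via the local residue computation (absence of a simple-pole term in $\phi_{p_i}$), whereas the paper takes this inclusion as given from the definition of $\widebar\pi$ and the fact that $\res^m$ is an $A$-derivation.
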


\begin{proof}
    We need only show that if $U \subset C'$ is an open subset containing $P = \tau(E)$, then $\O_{C'}(U)$ is a finitely generated module over $\O_{\widebar{C}}(U)$. 

    Let $M$ be the image of $\res^m : \O_{C'}(U) \to \O_S(-\delta)$. Then we have an exact sequence of $\O_{\widebar{C}}(U)$-modules
    \[
    0 \to \O_{\widebar{C}}(U) \to \O_{C'}(U) \to M \to 0.
    \]
    Now since $\O_S(-\delta)$ is a noetherian $A$-module, $M$ is finitely generated as an $A$-module. Thus, since $A \subset \O_{\widebar{C}}(U)$, $M$ is also finitely generated as an $\O_{\widebar{C}}(U)$-module. Therefore, both $\O_{\widebar{C}}(U)$ and $M$ are finitely generated as $\O_{\widebar{C}}(U)$-modules, so we see that the same is true for $\O_{C'}(U)$. 
\end{proof}

\begin{proposition}
    The ringed space $\widebar{C}$ is a scheme.
\end{proposition}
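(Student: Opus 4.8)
The plan is to exhibit an affine open neighbourhood of the contracted point $P = \tau(E)$; away from $P$ the map $\tau$ is an isomorphism, so $\widebar{C}\setminus\{P\}$ is isomorphic to the open subscheme $C\setminus E$ of $C$ and nothing is needed there. Two preliminary observations: first, $C'$ is a scheme — it is the (seminormal) contraction of the genus one subcurve $E$, obtained by gluing $C\setminus E$ to the spectrum of the ring of the $n$ branches $R_1,\dots,R_n$ through a point, as in \cite{smyth_mstable}. Second, $\widebar{C}$ is a locally ringed space: the stalk at any point other than $P$ is a stalk of $\O_C$, and $\O_{\widebar{C},P}=\varinjlim_{W\ni P}\O_{\widebar{C}}(W)$ is local, because if $f$ lies in it with $f(P)\neq 0$ then $f$ is a unit in $\O_{C',P}$ and its inverse again lies in $\O_{\widebar{C},P}$: applying the derivation $\res^m$ to $1 = f\cdot f^{-1}$ gives $0=\res^m(\phi)=f(P)\,\res^m(f^{-1}\phi)$, so $\res^m(f^{-1}\phi)=0$.

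Now fix an affine open $U=\Spec R'$ of $C'$ containing $P$, with $R'=\O_{C'}(U)=\O_C(V)$ for $V=(\tau')^{-1}(U)$, and set $B=\O_{\widebar{C}}(U)\subseteq R'$; by Lemma \ref{finite}, $R'$ is a finite $B$-module. Let $\mathfrak{m}_P\subset B$ be the maximal ideal corresponding to $P$. By Lemma \ref{splitting} the $B$-module $R'/B$ is isomorphic to $[t]\mathrm{Ann}_A(t)$, on which $B$ acts through evaluation at $P$; in particular every element of $\mathfrak{m}_P$ kills $R'/B$. Choose any nonzero $s\in\mathfrak{m}_P$ not vanishing identically on a branch and shrink $U$ so that its vanishing locus in $U$ is exactly $\{P\}$. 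Since $s\,(R'/B)=0$ we get $B_s = R'_s$, so the finite morphism $\pi\colon U=\Spec R'\to\Spec B$ restricts to an isomorphism over $D(s)$; and over $V(s)=\{[\mathfrak{m}_P]\}$ the fibre $\Spec(R'/\mathfrak{m}_P R')$ is the spectrum of a local artinian ring (again because $R'/B$ is killed by $\mathfrak{m}_P$), hence a single point. As $\pi$ is finite it is closed and surjective, so $\pi$ is a homeomorphism.

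It remains to show $\pi_*(\O_{\widebar{C}}|_U)=\O_{\Spec B}$. The two sheaves have the same global sections $B$ and agree over $D(s)$, so by the previous paragraph it suffices to compare stalks at $[\mathfrak{m}_P]$, i.e. to prove $\O_{\widebar{C},P}=B_{\mathfrak{m}_P}$. This is the crux. We have inclusions $B_{\mathfrak{m}_P}\subseteq\O_{\widebar{C},P}\subseteq\O_{C',P}$, and localizing Lemma \ref{splitting} at $P$ (the splitting is uniform over all $W\ni P$, and $\mathfrak{m}_P$ is the only prime of $R'$ over $\mathfrak{m}_P$) yields isomorphisms $\O_{C',P}/B_{\mathfrak{m}_P}\cong[t]\mathrm{Ann}_A(t)\cong\O_{C',P}/\O_{\widebar{C},P}$, each a finite-length $A$-module. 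Additivity of length in
\[
0 \to \O_{\widebar{C},P}/B_{\mathfrak{m}_P} \to \O_{C',P}/B_{\mathfrak{m}_P} \to \O_{C',P}/\O_{\widebar{C},P} \to 0
\]
then forces $\O_{\widebar{C},P}/B_{\mathfrak{m}_P}=0$. Hence $\O_{\widebar{C},P}=B_{\mathfrak{m}_P}$, the natural map $(U,\O_{\widebar{C}}|_U)\to\Spec B$ is an isomorphism of locally ringed spaces, and together with the observation on $\widebar{C}\setminus\{P\}$ this shows $\widebar{C}$ is a scheme.

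The main obstacle is exactly this last stalk computation: that the local ring of the contraction at $P$ is the localization of $B=\O_{\widebar{C}}(U)$ at $\mathfrak{m}_P$. The reduction away from $P$, the homeomorphism claim, and the scheme-ness of $C'$ are formal or standard, but the equality of stalks genuinely uses the explicit splitting of $\res^m$ furnished by Lemma \ref{splitting}.
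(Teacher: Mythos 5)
Your strategy shares the paper's skeleton---an affine neighbourhood $U$ of $P$, a homeomorphism $\Spec R' \to \Spec B$ obtained from the finiteness in Lemma \ref{finite}, and a stalk comparison built on the split sequence from Lemma \ref{splitting}---but you argue directly over the artinian base $A$, whereas the paper first reduces to $S = \Spec k$. That difference exposes a genuine error: the assertion that every element of $\mathfrak{m}_P$ kills $R'/B$ is false in general. You correctly note that the $B$-module structure on $R'/B \cong [t]\mathrm{Ann}_A(t)$ factors through $b \mapsto b(P)$, but $b(P)$ lies in $A$, not in $k$; so $b \in \mathfrak{m}_P$ only gives $b(P) \in \mm$, and $\mm\cdot\mathrm{Ann}_A(t)$ is typically nonzero. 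In Example \ref{tacnode}, with $A = k[u]/(u^3)$ and $t = u^2$, one has $\mathrm{Ann}_A(t) = (u)$, and the constant $u$ lies in $\mathfrak{m}_P$ yet carries $[u^2]u$ to $[u^2]u^2 \neq 0$.

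What does hold is that $\mathfrak{m}_P$ acts nilpotently on $R'/B$, so $R'/B$ is supported at $\{\mathfrak{m}_P\}$. This weaker fact still gives $(R'/B)_s = 0$ and hence $B_s = R'_s$, so your first use is salvageable; but the inference that $R'/\mathfrak{m}_P R'$ is local ``because $R'/B$ is killed by $\mathfrak{m}_P$'' needs to be replaced by the observation that $s \in \mathfrak{m}_P$ forces $V(\mathfrak{m}_P R') \subset V(sR')$, which you arranged to be $\{P\}$. With these two repairs your argument goes through, and your closing length computation is a valid (if heavier) substitute for the paper's remark that localization commutes with kernels. Reducing first to $S = \Spec k$, as the paper does, makes $t = 0$ and $\mm = 0$, so your killing claim becomes trivially true there---at the cost of needing to know that an affine thickening of an affine scheme is affine.
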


\begin{proof}
    First, since $\tau : C \to \widebar{C}$ is an isomorphism outside of $E$, we need only show that $\tau(E) = P$ has an affine open neighborhood. We reduce to showing this when $S = \Spec(k)$, since if $U$ is an affine neighborhood of $P$ over $\Spec(k)$, then a deformation of this neighborhood over a local artinian ring will still be affine (\cite{hartdef} Cor. 4.8).

    Let $U$ be an affine open neighborhood of $P$ in $C'$, e.g., by removing a smooth point from each component. We claim that $(U,\O_{\widebar{C}}|_U)$ is an affine scheme, and thus an affine neighborhood of $P$ in $\widebar{C}$. For this to be true, we first show that $U$ is homeomorphic to $\Spec\,\O_{\widebar{C}}(U)$. Since $C'$ and $\widebar{C}$ have the same topological space, we see that this is equivalent to showing that $\Spec\,\O_{C'}(U) \to \Spec\,\O_{\widebar{C}}(U)$ is a homeomorphism. To show this, let $V = U \setminus P$, so $(V, \O_{C'}|_V)$ and $(V, \O_{\widebar{C}}|_V)$ are both affine, equal to $\Spec\,\O_{C'}(V)$ and $\Spec\,\O_{\widebar{C}}(V)$, respectively. We then have the following commutative diagram
    \begin{center}
        \begin{tikzcd}
{\Spec\,\O_{\widebar{C}}(U)} & {\Spec\,\O_{\widebar{C}}(V)} \arrow[l] \\
{\Spec\,\O_{C'}(U)} \arrow[u] & {\Spec\,\O_{C'}(V)} \arrow[u] \arrow[l]
\end{tikzcd}
    \end{center}
    The left vertical map is finite by Lemma \ref{finite}, so in particular it is closed. The right vertical map is a bijection, and since the horizontal maps are just adding one point to each space, the left vertical map must also be a bijection. Thus we have a closed bijection, which is a homeomorphism.

    Lastly, to check that $(U,\O_{\bar{C}}|_U)$ is isomorphic to $\Spec\,\O_{\bar{C}}(U)$ we must check that the maps on stalks are isomorphisms. The only place this can fail is at the point $P$. To check that this is an isomorphism, we consider the exact sequence of sheaves
    \[
    0 \to \O_{\bar{C}} \to \O_{C'} \to M \to 0.
    \]
    Restricting to $U$, we still have an exact sequence, so we see that $(\O_{\bar{C}}|_U)_P = \ker((\O_{C'}|_U)_P \to (M|_U)_P)$. On the other hand, $\O_{\bar{C}}(U)$ is defined to be the kernel of $\O_{C'}(U) \to M(U)$, so we see that the stalks at $P$ are isomorphic. 
\end{proof}

\begin{proposition}\label{flat}
    The map $\widebar{\pi} : \widebar{C} \to S$ is flat.
\end{proposition}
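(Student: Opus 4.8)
The plan is to check flatness of $\widebar\pi$ stalkwise. Away from the contracted point $P=\tau(E)$ the map $\tau$ is an isomorphism and $\pi$ is flat, so one is reduced to showing that $\O_{\widebar C,P}$ is flat over $A$; since this is a localization of $\O_{\widebar C}(U)$ for $U$ a small affine open neighbourhood of $P$, it suffices to prove that $\O_{\widebar C}(U)$ is flat over $A$, where throughout $V=(\tau')^{-1}(U)$, $C_0=C\times_S\Spec k$, $V_0=V\cap C_0$, and $U_0$ denotes the image of $V_0$.

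I would induct on $\mathrm{length}_A(A)$, the case $A=k$ being trivial. For the inductive step, choose $0\neq a$ in the largest nonzero power $\mm^N$ of $\mm$; then $I:=aA$ is killed by $\mm$, hence isomorphic to $k$ as an $A$-module, and $A\twoheadrightarrow A':=A/I$ is a small extension. Applying the inductive hypothesis to the induced family over $\Spec A'$ together with the base change proposition above — whose proof already handles quotients by an ideal isomorphic to $k$ — we get that $\O_{\widebar C}(U)\otimes_A A'\cong\O_{\widebar{C}\times_S\Spec A'}(U)$ is flat over $A'$. By the local criterion of flatness for a small extension it then remains only to check $\Tor_1^A(A',\O_{\widebar C}(U))=0$, which, since $I\cong k$, is equivalent to the equality $\mathrm{Ann}_{\O_{\widebar C}(U)}(a)=\mm\,\O_{\widebar C}(U)$ (the inclusion $\supseteq$ being automatic as $a\mm=0$).

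So suppose $f\in\O_{\widebar C}(U)$ satisfies $af=0$; I must show $f\in\mm\,\O_{\widebar C}(U)$. The idea is to compute the annihilator inside the ambient sheaf $\O_C$, where $A$-flatness is available, and then carry the conclusion back along the base change proposition. Since $\O_C$ is $A$-flat and $\mathrm{Ann}_A(a)=\mm$, one has $\mathrm{Ann}_{\O_C}(a)=\mm\O_C$ as subsheaves of $\O_C$; hence $f\in\Gamma(V,\mm\O_C)$, and therefore $f$ maps to $0$ under the reduction $\O_C(V)=\Gamma(V,\O_C)\to\Gamma(V,\O_C/\mm\O_C)=\O_{C_0}(V_0)$. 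But this reduction sends $\O_{\widebar C}(U)$ into $\O_{\widebar{C}_0}(U_0)$, and by the base change proposition the induced map $\O_{\widebar C}(U)\to\O_{\widebar{C}_0}(U_0)$ is the quotient $\O_{\widebar C}(U)\to\O_{\widebar C}(U)\otimes_A k$ followed by an isomorphism, so its kernel is exactly $\mm\,\O_{\widebar C}(U)$. As $f$ lies in that kernel, $f\in\mm\,\O_{\widebar C}(U)$, completing the induction and the proof.

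The step that needs care is that $V$ is not affine: it contains the genus-one curve $E$, so $H^1(V,\O_C)\neq0$ and $\Gamma(V,-)$ fails to be exact. In particular one must not identify $\O_C(V)\otimes_A k$ with $\O_{C_0}(V_0)$, and a direct attempt to deduce flatness from the split exact sequence of Lemma~\ref{splitting} does not work, since $\O_C(V)$ is itself not flat over $A$. The argument above circumvents this by only computing annihilators inside $\O_C$ and by routing every statement about $\O_{\widebar C}$ through the base change proposition, which itself encodes the lifting result of Theorem~\ref{main}.
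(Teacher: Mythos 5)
Your proof is correct, and although it shares with the paper's argument the overall strategy of an induction along small extensions combined with the local criterion of flatness, the engine driving it is genuinely different. The paper checks $\Tor_1^A(k,\O_{\widebar C,P})=0$ by an induction over the artinian filtration $A_i = A/\mm^{i+1}$, routed through the change-of-rings spectral sequence; the needed vanishing $\Tor_1^{A_{i+1}}(A_i,\O_{\widebar{C}_{i+1},P})=0$ is extracted from the short exact sequence of stalks $0 \to J\otimes\O_{\widebar{C}_{i+1},P}\to\O_{\widebar{C}_{i+1},P}\to\O_{\widebar{C}_i,P}\to0$ obtained from Theorem~\ref{main}. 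You instead route everything through the base change proposition, whose proof (as you correctly note) already covers quotients $A/(a)$ with $(a)\cong k$ even though the statement is phrased for $A/\mm^i$. That single proposition supplies both halves of the small-extension criterion: flatness of $\O_{\widebar C}(U)\otimes_A A'$ over $A'$ via the inductive hypothesis, and the identification $\O_{\widebar C}(U)\otimes_A k\cong\O_{\widebar{C}_0}(U_0)$ that converts the remaining $\Tor_1$-vanishing into the concrete annihilator condition $\mathrm{Ann}_{\O_{\widebar C}(U)}(a)=\mm\O_{\widebar C}(U)$. The observation carrying the calculation is a nice one: although $\Gamma(V,\O_C)$ is not $A$-flat because $V$ is not affine, $\O_C$ \emph{is} flat as a sheaf, so $\mathrm{Ann}_{\O_C}(a)=\mm\O_C$ stalkwise and the condition $af=0$ can be pulled through the reduction map and then back along the base change isomorphism. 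What your approach buys is the avoidance of the spectral sequence in favour of an explicit, hands-on computation; what the paper's approach buys is independence from the base change proposition, since it extracts the needed exactness directly from Theorem~\ref{main}. Both arguments draw their real strength from that theorem.
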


\begin{proof}
    First, since $\pi : C \to S$ is flat and $\tau : C \to \widebar{C}$ is an isomorphism outside of the contracted subcurve, we see that we need only check flatness at the contracted point, i.e., if $P = \tau(E)$, we must show that $A \to \O_{\widebar{C},P}$ is flat. Using the local criterion for flatness over a local artinian ring, see \cite{stacks-project} Tag 051K, we must show that 
    \begin{enumerate}
        \item $\O_{\widebar{C},P}/\mm\O_{\widebar{C},P}$ is flat over $A/\mm$;
        \item $\text{Tor}_1^A(k, \O_{\widebar{C},P}) = 0$.
    \end{enumerate}
    The first condition is trivially satisfied, since $A/\mm \simeq k$ is a field. We now proceed with the proof of (ii). 

    Let us first introduce some notation. Since $A$ is a local artinian ring, its maximal ideal $\mm$ is nilpotent, say $\mm^{n+1} = 0$ for some $n \geq 0$. We write $A_i = A/\mm^{i+1}$ and $C_i = C \times_S \Spec(A_i)$ for $0 \leq i \leq n$, so $A_n = A$ and $C_n = C$. We proceed by induction on $i$. For $i = 0$, $A_0 = k$ is a field, so $\Tor_1^{A_0}(k,\O_{\widebar{C_0},P}) = 0$. Now assume that $\Tor_1^{A_i}(k,\O_{\widebar{C_i},P}) = 0$. We will show that $\Tor_1^{A_{i+1}}(k,\O_{\widebar{C_{i+1}},P}) = 0$. 

    We will make use of the spectral sequence (\cite{stacks-project} Tag 068F)
    \[
    \Tor_n^{A_i}(k,\Tor_m^{A_{i+1}}(A_i,\O_{\widebar{C_{i+1}},P})) \implies \Tor_{n+m}^{A_{i+1}}(k,\O_{\widebar{C_{i+1}},P}).
    \]
    We will show $\Tor_1^{A_i}(k,\Tor_0^{A_{i+1}}(A_i,\O_{\widebar{C_{i+1}},P})) = 0$ and $\Tor_0^{A_i}(k,\Tor_1^{A_{i+1}}(A_i,\O_{\widebar{C_{i+1}},P})) = 0$, which together imply that $\Tor_1^{A_{i+1}}(k,\O_{\widebar{C_{i+1}},P}) = 0$, as desired. First, using the induction hypothesis and the definition of $\Tor_0$, we have
    \[
    \Tor_1^{A_i}(k,\Tor_0^{A_{i+1}}(A_i,\O_{\widebar{C_{i+1}},P})) = \Tor_1^{A_i}(k,\O_{\widebar{C_i},P}) = 0.
    \]
    Next, let us denote $J = \ker(A_{i+1} \to A_i)$, so we have an exact sequence 
    \[
    0 \to J \otimes \O_{C_{i+1}} \to \O_{C_{i+1}} \to \O_{C_i} \to 0,
    \]
    since $C_{i+1}$ is an infinitesimal deformation of $C_i$. Pushing forward along the topological contraction $\tau'$, we get
    \[
    0 \to J \otimes \tau'_*\O_{C_{i+1}} \to \tau'_*\O_{C_{i+1}} \to \tau'_*\O_{C_i}.
    \]
    From Theorem \ref{main}, we see that this sequence gives rise to
    \[
    0 \to J \otimes \O_{\widebar{C_{i+1}}} \to \O_{\widebar{C_{i+1}}} \to \O_{\widebar{C_{i}}} \to 0.
    \]
    Taking stalks at $P$, we have the exact sequence of $A_{i+1}$-modules
    \begin{align}\label{O-seq}
    0 \to J \otimes \O_{\widebar{C_{i+1}},P} \to \O_{\widebar{C_{i+1}},P} \to \O_{\widebar{C_{i}},P} \to 0.
    \end{align}
    We note that this is the exact sequence that arises from tensoring
    \begin{align}\label{A-seq}
        0 \to J \to A_{i+1} \to A_i \to 0
    \end{align}
    with $\O_{\widebar{C_{i+1}},P}$. Now since \eqref{O-seq} and \eqref{A-seq} are exact, the long exact sequence from $\Tor_m^{A_{i+1}}(-,\O_{\widebar{C_{i+1}},P})$ applied to \eqref{A-seq} tells us that $\Tor_1^{A_{i+1}}(A_i,\O_{\widebar{C_{i+1}},P}) = 0$. This then implies that
    \[
    \Tor_0^{A_i}(k,\Tor_1^{A_{i+1}}(A_i,\O_{\widebar{C_{i+1}},P})) = \Tor_0^{A_i}(k,0) = 0,
    \]
    which concludes the proof.
\end{proof}

\begin{proposition}
    The map $\widebar{\pi} : \widebar{C} \to S$ is proper.
\end{proposition}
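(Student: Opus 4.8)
The plan is to verify the three defining properties of a proper morphism for $\widebar\pi$: separated, of finite type, and universally closed. Away from the contracted point $P=\tau(E)$ the morphism $\tau$ is an isomorphism, so there $\widebar\pi$ coincides with the log curve $\pi$, which is proper; hence all three properties hold over $\widebar C\setminus\{P\}$, and only the behaviour near $P$ needs attention. As with the earlier properties of the contraction, being of finite type and being separated may be tested on the special fibre over $\Spec(k)$: since $\mm$ is nilpotent, a finite set of $A$-algebra generators lifts from one for the reduction modulo $\mm$ by iterating over the powers of $\mm$, and whether the diagonal is a closed immersion can be read off from underlying topological spaces together with the fact that the diagonal of any morphism of schemes is an immersion.

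The conceptual heart of the argument is universal closedness, which I would prove directly over $S$. The topological contraction $\tau'\colon C\to C'$ is surjective and $C'\to\widebar C$ is the identity on underlying spaces, so $\tau\colon C\to\widebar C$ is surjective; hence every base change $\tau_{S'}\colon C_{S'}\to\widebar C_{S'}$ is surjective as well. Now $\pi=\widebar\pi\circ\tau$ is a log curve, hence proper, hence universally closed. For any $S'\to S$ and any closed subset $Z\subseteq\widebar C_{S'}$, the preimage $\tau_{S'}^{-1}(Z)$ is closed in $C_{S'}$; its image in $S'$ is closed, because $C_{S'}\to S'$ is closed; and that image equals the image of $Z$, by surjectivity of $\tau_{S'}$. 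Thus $\widebar C_{S'}\to S'$ is closed for every $S'$, i.e.\ $\widebar\pi$ is universally closed.

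For finite type, working over $\Spec(k)$: the open subscheme $\widebar C\setminus\{P\}\cong C\setminus E$ is of finite type, being open in the curve $C$, and the point $P$ has an affine open neighbourhood $U_0=\Spec\,\O_{\widebar C}(U)$ by the proposition that $\widebar C$ is a scheme. By Lemma~\ref{finite}, $\O_{C'}(U)=\O_C\big((\tau')^{-1}(U)\big)$ is a finite $\O_{\widebar C}(U)$-module; and $\O_{C'}(U)$ is a finitely generated $k$-algebra, being the ring of regular functions on an open subscheme of the proper curve $C$. Applying the Artin--Tate lemma to $k\subseteq\O_{\widebar C}(U)\subseteq\O_{C'}(U)$ then shows $\O_{\widebar C}(U)$ is a finitely generated $k$-algebra, so $\widebar\pi$ is of finite type. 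For separatedness, again over $\Spec(k)$, cover $\widebar C$ by $U_0$ together with affine open subsets of $\widebar C\setminus\{P\}\cong C\setminus E$. Here $U_0\setminus\{P\}$ is again affine (a quasi-compact open curve with no complete component; on the explicit local models of Gorenstein genus-one singularities, deleting the singular point leaves a disjoint union of copies of $\A^1\setminus\{0\}$). Every pairwise intersection of members of this cover lies, along with the sheaf $\O_{\widebar C}$, inside $C\setminus E\subseteq C$ under the isomorphism on the complement of $P$; since $C$ is separated, these intersections are affine and the corresponding multiplication maps are surjective, so the affine-cover criterion for separatedness applies.

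Putting the three pieces together, $\widebar\pi$ is separated, of finite type, and universally closed, hence proper. The step I expect to be the main obstacle is separatedness: one must confirm that the affine chart at $P$ overlaps the charts on $\widebar C\setminus\{P\}$ in affine opens with surjective gluing maps, and this is exactly where the explicit description of $\widebar C$ near $P$ (its scheme structure together with the finiteness of $C'\to\widebar C$ from Lemma~\ref{finite}) is needed. Universal closedness, by contrast, is immediate once one observes that $\tau$ is surjective and that the log curve $\pi$ is proper.
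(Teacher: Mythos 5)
Your treatment of universal closedness (unpacking the fact that $\tau$ is surjective and $\pi$ universally closed) and of finite type (Artin--Tate together with Lemma~\ref{finite}) agrees in substance with the paper, which cites the corresponding Stacks Project and Atiyah--Macdonald references for exactly those two steps. The divergence is in separatedness, and that is where your argument has a real gap.

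You attempt to verify separatedness of $\widebar C_0$ over $k$ with the affine-cover criterion, covering $\widebar C_0$ by an affine chart $U_0\ni P$ and affine opens $W_i\subset\widebar C_0\setminus\{P\}\cong C_0\setminus E$. The intersections are indeed affine, but the surjectivity of the multiplication map
$\O_{\widebar C_0}(U_0)\otimes_k\O_{\widebar C_0}(W_i)\to\O_{\widebar C_0}(U_0\cap W_i)$
does not follow ``since $C$ is separated.'' Separatedness of $C_0$ gives surjectivity of $\O_{C_0}(U)\otimes\O_{C_0}(W)\to\O_{C_0}(U\cap W)$ only for \emph{affine} opens $U,W\subset C_0$. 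Here the relevant open $\tau^{-1}(U_0)$ of $C_0$ is not affine, as it contains the proper genus-one subcurve $E$; and $\O_{\widebar C_0}(U_0)$ is a proper subring of $\O_{C_0}(U_0\setminus\{P\})$, so the surjectivity you already know (using the affine open $U_0\setminus\{P\}$ of $C_0\setminus E$) does not transfer to $\O_{\widebar C_0}(U_0)$. You flag this as the ``main obstacle,'' which is correct, but the justification given is not sufficient; one would need a hands-on argument showing that enough of $\O_{C_0}(U_0\setminus\{P\})$ is generated by $\O_{\widebar C_0}(U_0)$ and $\O_{C_0}(W_i)$, and it is not clear that this is easier than the problem itself.

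The paper sidesteps this entirely: after reducing to $S=\Spec(k)$, it shows $\tau:C\to\widebar C$ is proper (by factoring through the finite map $C'\to\widebar C$), in particular surjective and universally closed, and then applies the fact that separatedness descends along surjective universally closed morphisms (\cite{stacks-project} Tag 09MQ). That route uses exactly the input you already have in hand — properness of $\pi$ and finiteness of $C'\to\widebar C$ from Lemma~\ref{finite} — and avoids having to verify the gluing-surjectivity by hand. I would recommend replacing your affine-cover argument with this descent argument; your treatments of the other two ingredients can stay as they are.
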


\begin{proof}
    Since $\pi$ is proper and $\tau$ is surjective, it suffices to show that $\widebar{\pi}$ is separated and locally of finite type (see \cite{stacks-project} Tag 03GN). First, since $C' \to \widebar{C}$ is finite and $C' \to S$ is locally of finite type, $\widebar{\pi} : \widebar{C} \to S$ is locally of finite type (see \cite{atiyahmacdonald} Prop. 7.8). Next, for separatedness we reduce to the case where $S = \Spec(k)$, since we can check separatedness on the reduction of a scheme. 

    Now we note that the map $\tau : C \to \widebar{C}$ can be factored as $C \to C' \to \widebar{C}$. The map $C \to C'$ is proper, as both $C$ and $C'$ are proper over $S$ (\cite{stacks-project} Tag 01W6), and $C' \to \widebar{C}$ is finite, so also proper. Therefore, $\tau : C \to \widebar{C}$ is proper. In particular, $\tau$ is surjective and universally closed, so $\widebar{\pi} : \widebar{C} \to S$ is separated by \cite{stacks-project} Tag 09MQ (statement (4)).
\end{proof}

\begin{proposition}\label{sing}
    The curve $\widebar{C_0}$ has a Gorenstein genus one singularity with $n$ branches at the point $\tau(E)$. 
\end{proposition}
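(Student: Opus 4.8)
\subsection*{Proof proposal}

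The plan is to compute the complete local ring of $\widebar{C_0}$ at $P := \tau(E)$ explicitly as a subring of a product of power series rings, and then to recognize it via Smyth's classification (\cite{smyth_mstable}) of Gorenstein genus one singularities. We work over $\Spec k$ throughout; since $\tau$ is an isomorphism away from $E$, only the point $P$ needs to be examined.

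First I would describe the underlying curve $C'$ near $P$. Each $R_i$ is smooth at the node $p_i$, so a local coordinate $x_i$ on $R_i$ at $p_i$ gives $\widehat{\O}_{R_i,p_i} \cong k[\![x_i]\!]$. A function on a neighborhood of $E$ in $C_0$ restricts on the connected genus one curve $E$ to a global section, hence to a constant; therefore
\[
\widehat{\O}_{C',P} \;\cong\; \Big\{ (g_1,\dots,g_n) \in \prod_{i=1}^n k[\![x_i]\!] \;:\; g_1(0) = \cdots = g_n(0) \Big\},
\]
with integral closure $\prod_{i=1}^n k[\![x_i]\!]$; in particular the normalization of $\widebar{C_0}$ at $P$ has exactly $n$ branches, so $m(P) = n$.

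Next I would translate the defining condition \eqref{res_cond} of $\O_{\widebar{C_0}}$ into this picture. Over $\Spec k$ we have $\O_S(-\delta) \cong k\cdot[t]$, and by the local expansion of the chosen differential $\phi$ recorded in the proof of Theorem~\ref{main} we may write $(\phi_{p_i})|_{R_i} = [t]\big(\gamma_{-2}^i x_i^{-2} + \gamma_0^i + \cdots\big)\,dx_i$ with each $\gamma_{-2}^i \in k^{\times}$ and no $x_i^{-1}$ term; writing $f|_{R_i} = c_0^i + c_1^i x_i + \cdots$ near $p_i$ this gives $\res^m_{p_i,R_i}(f\phi|_V) = [t]\gamma_{-2}^i c_1^i$, so \eqref{res_cond} reads $\sum_i \gamma_{-2}^i c_1^i = 0$ in $k$. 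Consequently
\[
\widehat{\O}_{\widebar{C_0},P} \;\cong\; \Big\{ (g_i) \in \prod_{i=1}^n k[\![x_i]\!] \;:\; g_1(0) = \cdots = g_n(0),\ \sum_{i=1}^n \gamma_{-2}^i\, g_i'(0) = 0 \Big\}.
\]
Rescaling each $x_i$ (and flipping a sign where needed) we may assume every $\gamma_{-2}^i = 1$; the resulting ring is $k[\![x^2,x^3]\!]$ (the cusp) when $n=1$, the tacnode when $n=2$, and the coordinate ring of $n$ general lines through the origin in $\A^{n-1}$ when $n \geq 3$ --- exactly Smyth's list.

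Finally I would pin down the invariants intrinsically. Since $\widehat{\O}_{\widebar{C_0},P}$ is the kernel of the surjective $k$-linear map $\prod_{i=1}^n k[\![x_i]\!] \to k^n$ sending $(g_i)$ to $\big(g_1(0)-g_2(0),\dots,g_{n-1}(0)-g_n(0),\,\sum_i \gamma_{-2}^i g_i'(0)\big)$, we get $\delta(P) = n$, hence $g(P) = \delta(P) - m(P) + 1 = 1$. A short computation shows that the conductor of $\widehat{\O}_{\widebar{C_0},P}$ in $\prod_i k[\![x_i]\!]$ is $\mathfrak{c} = \prod_i x_i^2\, k[\![x_i]\!]$, so that $\dim_k\big(\prod_i k[\![x_i]\!]/\mathfrak{c}\big) = 2n = 2\,\delta(P)$; by the classical characterization of Gorenstein curve singularities (the conductor has colength $2\delta$ in the normalization) the singularity at $P$ is Gorenstein --- alternatively, this is immediate from the explicit identification above. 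By the uniqueness in Smyth's classification, $\widebar{C_0}$ therefore carries the Gorenstein genus one singularity with $n$ branches at $P$, as claimed. The delicate step is the second one: correctly reading off $\widehat{\O}_{C',P}$ and translating \eqref{res_cond} into the single linear relation $\sum_i \gamma_{-2}^i g_i'(0) = 0$, keeping track of the units $\gamma_{-2}^i$ and the twist by $\O_S(-\delta)$; the remaining steps are routine.
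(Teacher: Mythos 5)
Your proof is correct and follows the same route as the paper: identify $\widehat{\O}_{C',P}$ with the fiber product $k[\![x_1]\!]\times_k\cdots\times_k k[\![x_n]\!]$, expand $f$ and $\phi$ at the $p_i$, and observe that the residue condition \eqref{res_cond} reduces to the single linear relation $\sum_i \gamma_{-2}^i c_1^i = 0$. The only difference is in the final step: the paper simply cites the appendix of \cite{smyth_mstable} for the fact that this relation cuts out the Gorenstein genus one singularity with $n$ branches, whereas you normalize the $\gamma_{-2}^i$ by rescaling coordinates, match the cases $n=1,2,\geq 3$ against Smyth's list explicitly, and additionally verify $\delta(P)=n$, $g(P)=1$, and Gorensteinness via the conductor colength criterion. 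Your version is a bit more self-contained; the paper's is shorter but relies more heavily on the cited classification.
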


\begin{proof}
    Let $P = \tau'(E) \in C'$. Since $C'$ consists of $n$ rational curves meeting transversally at the point $P$, we see that $\hat\O_{C',P} \simeq k[[x_1]] \times_k \cdots \times_k k[[x_n]]$ for some parameters $x_1,...,x_n$. 
    
    Let $f$ be a function on $C_0$ that descends to $\widebar{C_0}$, which we can view as an element of $\hat\O_{\widebar{C_0},P}$. This function must necessarily be constant on the interior of the aligned circle, since $E$ is proper and reduced. Thus, choosing local coordinates $x_i$, $y_i$ at each $p_i$ such that $x_i$ is the coordinate for $R_i$, we may write $f_{p_i} = c + c_1^ix_i + \cdots \in \hat\O_{C_0,p_i}$. 

    Now $\phi|_{R_i} \in H^0(R_i, \omega_{R_i}(2p_i) \otimes \pi^*\O_k(-\delta))$, so we can express $\phi_{p_i}|_{R_i}$ in these coordinates as 
    \[
    \phi_{p_i}|_{R_i} = [t](\gamma_{-2}^ix_i^{-2} + \gamma_0^i + \gamma_1^ix_i + \cdots)\,dx_i.
    \] 
    Note that there is no $x_i^{-1}$ term as the total residue of $\phi|_{R_i}$ must be 0, and $\gamma_{-2}^i \neq 0$, since $\phi|_{R_i}$ has a pole of order two at $p_i$. Therefore, the residue condition \eqref{res_cond} tells us that 
    \begin{align}\label{zero-sum}
         \sum_{p_i} \gamma_{-2}^ic_1^i = 0.
    \end{align}

   Now we send such an $f$ to $\hat\O_{C',P}$ via the inclusion $\hat\O_{\widebar{C_0},P} \to \hat\O_{C',P}$ by sending $f$ to $(f_{p_1},...,f_{p_n}) \in k[[x_1]] \times_k \cdots \times_k k[[x_n]]$. As we have just seen, $f$ descending to a function on $\widebar{C_0}$ is equivalent to \eqref{zero-sum} being satisfied. This condition cuts out the subring of $k[[x_1]] \times_k \cdots \times_k k[[x_n]]$ defining a Gorenstein genus one singularity with $n$ branches (see the appendix of \cite{smyth_mstable}), which proves the statement.
\end{proof}

\begin{example}\label{tacnode}
    Set $A = k[u]/(u^3)$ and let $C \to \Spec(A)$ be the curve with $C_0$ consisting of a genus one curve, $X$, attached to a single rational curve, $T$, that is attached to two more rational curves, $R_1$ and $R_2$, and assume that every node has smoothing parameter equal to $u$. See Figure \ref{fig:1} for the curve and its associated level graph $\Gamma$. We wish to contract $E = X \cup T$, which is colored red in Figure \ref{fig:1} (the contraction is also shown in Figure \ref{fig:2}). From Proposition \ref{sing}, the curve $\widebar{C_0}$ should have a tacnode at $\tau(E)$. Let us compute this explicitly. 

    Let $p_1$ and $p_2$ be the nodes connecting $R_1$ and $R_2$ to $T$, respectively, and assume we have local coordinates $x_1$ for $R_1$ at $p_1$ and $x_2$ for $R_2$ at $p_2$. As in the proof of Proposition \ref{sing}, given any function $f$ on an open neighborhood of $E$ in $C_0$, we can write 
    \[
    f_{p_i} = c + c_1^ix_i + \cdots
    \]
    and for the differential $\phi \in H^0(C_0,\omega_{C_0}(-\lambda))$
    \[
    \phi_{p_i}|_{R_i} = [u^2](\gamma_{-2}^ix_i^{-2} + \gamma_0^i + \gamma_1^ix_i + \cdots)\,dx_i.
    \]
    Note that $t = u^2$ is the product of all smoothing parameters for this example. The condition for $f$ to descend to the contraction is then
    \[
    [u^2](\gamma_{-2}^1c_1^1 + \gamma_{-2}^2c_1^2) = 0 \in \O_k(-\delta).
    \]
    This is equivalent to 
    \[
    \gamma_{-2}^1c_1^1 + \gamma_{-2}^2c_1^2 = 0,
    \]
    which is the equation of a tacnode. 
\end{example}

\begin{figure}
    \centering
    \begin{tikzpicture}[x=0.75pt,y=0.75pt,yscale=-1,xscale=1]

\draw [color={rgb, 255:red, 208; green, 2; blue, 27 }  ,draw opacity=1 ]   (68.62,167.01) .. controls (99.17,219.18) and (39.83,246.27) .. (70.38,298.45) ;
\draw [color={rgb, 255:red, 208; green, 2; blue, 27 }  ,draw opacity=1 ]   (40.67,245) .. controls (87.67,234.33) and (102.33,235.67) .. (154.33,241.67) ;
\draw [color={rgb, 255:red, 0; green, 0; blue, 0 }  ,draw opacity=1 ]   (88.2,251.1) .. controls (124.33,219.2) and (137.84,213.34) .. (186.34,193.66) ;
\draw [color={rgb, 255:red, 0; green, 0; blue, 0 }  ,draw opacity=1 ]   (108.2,262.44) .. controls (144.33,230.54) and (157.84,224.67) .. (206.34,205) ;
\draw    (310,37.85) -- (386.33,112.15) ;
\draw    (378.3,37) -- (318.04,113) ;
\draw  [color={rgb, 255:red, 208; green, 2; blue, 27 }  ,draw opacity=1 ][fill={rgb, 255:red, 208; green, 2; blue, 27 }  ,fill opacity=1 ] (345.15,75) .. controls (345.15,73.41) and (346.5,72.12) .. (348.17,72.12) .. controls (349.83,72.12) and (351.18,73.41) .. (351.18,75) .. controls (351.18,76.59) and (349.83,77.88) .. (348.17,77.88) .. controls (346.5,77.88) and (345.15,76.59) .. (345.15,75) -- cycle ;
\draw  [color={rgb, 255:red, 208; green, 2; blue, 27 }  ,draw opacity=1 ][fill={rgb, 255:red, 208; green, 2; blue, 27 }  ,fill opacity=1 ] (525.06,233.16) .. controls (525.06,231.38) and (526.47,229.95) .. (528.22,229.95) .. controls (529.96,229.95) and (531.37,231.38) .. (531.37,233.16) .. controls (531.37,234.93) and (529.96,236.37) .. (528.22,236.37) .. controls (526.47,236.37) and (525.06,234.93) .. (525.06,233.16) -- cycle ;
\draw   (475.67,197) .. controls (510.7,245.21) and (545.73,245.21) .. (580.77,197) ;
\draw   (581,268.96) .. controls (545.65,220.98) and (510.62,221.22) .. (475.9,269.67) ;
\draw    (244.67,163.33) -- (290.79,125.6) ;
\draw [shift={(292.33,124.33)}, rotate = 140.71] [color={rgb, 255:red, 0; green, 0; blue, 0 }  ][line width=0.75]    (10.93,-3.29) .. controls (6.95,-1.4) and (3.31,-0.3) .. (0,0) .. controls (3.31,0.3) and (6.95,1.4) .. (10.93,3.29)   ;
\draw    (406.67,126) -- (447.6,167.57) ;
\draw [shift={(449,169)}, rotate = 225.45] [color={rgb, 255:red, 0; green, 0; blue, 0 }  ][line width=0.75]    (10.93,-3.29) .. controls (6.95,-1.4) and (3.31,-0.3) .. (0,0) .. controls (3.31,0.3) and (6.95,1.4) .. (10.93,3.29)   ;
\draw    (272,245.33) -- (414.33,245.66) ;
\draw [shift={(416.33,245.67)}, rotate = 180.13] [color={rgb, 255:red, 0; green, 0; blue, 0 }  ][line width=0.75]    (10.93,-3.29) .. controls (6.95,-1.4) and (3.31,-0.3) .. (0,0) .. controls (3.31,0.3) and (6.95,1.4) .. (10.93,3.29)   ;

\draw (116.67,119) node [anchor=north west][inner sep=0.75pt]   [align=left] {$\displaystyle C_{0}$};
\draw (25.33,240.33) node [anchor=north west][inner sep=0.75pt]  [font=\footnotesize] [align=left] {$\displaystyle T$};
\draw (55.33,150.67) node [anchor=north west][inner sep=0.75pt]  [font=\footnotesize] [align=left] {$\displaystyle X$};
\draw (186,176.67) node [anchor=north west][inner sep=0.75pt]  [font=\footnotesize] [align=left] {$\displaystyle R_{1}$};
\draw (210.67,192) node [anchor=north west][inner sep=0.75pt]  [font=\footnotesize] [align=left] {$\displaystyle R_{2}$};
\draw (400,18.33) node [anchor=north west][inner sep=0.75pt]   [align=left] {$\displaystyle C'_{0}$};
\draw (555.33,149) node [anchor=north west][inner sep=0.75pt]   [align=left] {$\displaystyle \overline{C}_{0}$};

\end{tikzpicture}

    \caption{The curve $C_0$ along with the contraction $\widebar{C_0}$.}
    \label{fig:2}
\end{figure}

\appendix
\section{Invariance of Residue (by Adrian Neff and Jonathan Wise)}\label{inv}

In order to show that Definition \ref{residue} is invariant of the choice of local coordinate $x$, we will need to split into the cases of positive and zero characteristic. Fix a local artinian ring $A$ and let $A(\!(x)\!)$ be the ring of formal Laurent series over $A$. Give $A(\!(x)\!)$ the topology whose open neighborhoods of zero have a base consisting of $A$-submodules of the form $x^iA[\![x]\!]$ for $i \in \Z$. We begin with some generalities on continuous automorphisms of $A(\!(x)\!)$.

\subsection{Automorphisms of Laurent Series}

We begin by classifying continuous automorphisms of $A(\!(x)\!)$. 

Let $\varphi : A(\!(x)\!) \to A(\!(x)\!)$ be a continuous automorphism. Since $x \in A(\!(x)\!)^*$, we must have $\phi(x) \in A(\!(x)\!)^*$. Thus, we must classify the group of units $A(\!(x)\!)^*$. We introduce the following subset of $A(\!(x)\!)$:
\[
A(\!(x)\!)^{nil} = \left\{\sum a_ix^i \, | \, a_0 \in A^*, a_n \text{ is nilpotent for } n < 0\right\}.
\]

\begin{lemma}\label{units}
    Let $A$ be a local artinian ring. Then $A(\!(x)\!)^*$ is in bijection with $\Z \times A(\!(x)\!)^{nil}$.
\end{lemma}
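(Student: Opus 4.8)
The plan is to write down the bijection explicitly and then check that both maps are well defined and mutually inverse. In the direction $\Z \times A(\!(x)\!)^{nil} \to A(\!(x)\!)^*$, I would send $(n,g) \mapsto x^n g$. In the other direction, I would send a unit $f = \sum a_i x^i$ to the pair $(n, x^{-n}f)$, where $n$ is the least index $i$ for which $a_i \in A^*$.

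The one substantial input, which I would establish first, is that $R := A(\!(x)\!)$ is a local ring. Since $A$ is artinian, $\mm$ is nilpotent, so $\mm R$ is a nil ideal; as $R/\mm R \simeq k(\!(x)\!)$ is a field, $\mm R$ is the unique maximal ideal of $R$, and an element of $R$ is a unit exactly when its image in $k(\!(x)\!)$ is nonzero. Because $A$ is local, this holds if and only if some coefficient $a_i$ is a unit of $A$. From this, well-definedness of both maps is quick. The image in $k(\!(x)\!)$ of any $g \in A(\!(x)\!)^{nil}$ lies in $k[[x]]$ with nonzero constant term (its negative coefficients lie in $\mm$ and so vanish), hence has $x$-adic order $0$; in particular $g$, and therefore $x^n g$, is a unit of $R$. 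Conversely, for a unit $f = \sum a_i x^i$ the set $\{i : a_i \in A^*\}$ is nonempty by the criterion above and bounded below since $f$ has only finitely many negative terms, so its minimum $n$ exists; then $x^{-n}f$ has constant coefficient $a_n \in A^*$, while for $j < 0$ its coefficient of $x^j$ is $a_{n+j}$, which is not a unit by minimality of $n$ and hence is a nilpotent element of $\mm$. Thus $x^{-n}f \in A(\!(x)\!)^{nil}$.

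It then remains to see the two maps are inverse to one another. One composite returns $f = x^n g$ to itself by construction. For the other, suppose $x^n g = x^m h$ with $g, h \in A(\!(x)\!)^{nil}$; passing to $k(\!(x)\!)$ and using that the image of any element of $A(\!(x)\!)^{nil}$ has $x$-adic order $0$ forces $n = m$, whence $g = h$, so the pair $(n,g)$ is recovered. This completes the argument. I do not expect a genuine obstacle: essentially all the content is in recognizing that $A(\!(x)\!)$ is local with residue field $k(\!(x)\!)$, after which the bookkeeping with the least unit coefficient is routine; one could optionally also note that $A(\!(x)\!)^{nil}$ is a subgroup of $A(\!(x)\!)^*$, making the bijection an isomorphism of groups, though the statement as given only asserts a bijection of sets.
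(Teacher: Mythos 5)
Your argument is correct and constructs the same bijection $(n,g) \mapsto x^n g$ as the paper, with the inverse picking out the least index carrying a unit coefficient. The only real difference is that the paper simply cites a reference (Lemme 0.7 of \cite{JacobienneLocal}) for the key decomposition of units, whereas you supply a self-contained argument: you first observe that $A(\!(x)\!)$ is local with maximal ideal $\mm A(\!(x)\!)$ and residue field $k(\!(x)\!)$ (using that $\mm$ is nilpotent since $A$ is artinian), which gives the criterion that $f$ is a unit iff some coefficient is a unit of $A$, and then the nilpotence of the sub-minimal coefficients follows because non-units in the local artinian ring $A$ lie in the nilpotent ideal $\mm$. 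This is a clean elementary substitute for the citation and buys a fully self-contained proof.
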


\begin{proof}
    Indeed, since $\Spec(A)$ is connected, a unit in $A(\!(x)\!)$ is of the form $\sum a_ix^i$ with $a_N \in A^*$ for some $N$, and $a_n$ nilpotent for $n < N$, which can be written as $x^N(\sum a_ix^{i-N})$ with $\sum a_ix^{i-N} \in A(\!(x)\!)^{nil}$ (see, e.g., Lemme 0.7 of \cite{JacobienneLocal}). We then define the bijection by 
    \[
    x^N\left(\sum a_ix^{i-N}\right) \mapsto \left(N,\sum a_ix^{i-N}\right).
    \]
\end{proof}

Following the notation of \cite{LaurentHoms}, for a unit $u \in A(\!(x)\!)^*$, denote $\nu(u) = N$, where this is the associated $N$ from Lemma \ref{units}. Explicitly, if 
\[
u = \sum a_ix^i,
\]
then $\nu(u)$ is the $N$ such that $a_N \in A^*$ and $a_n$ is nilpotent for $n < N$. It is shown in \cite{LaurentHoms} (Theorem 4.7 with $n = m = 1$) that every continuous endomorphism $\varphi : A(\!(x)\!) \to A(\!(x)\!)$ satisfies $\nu(\varphi(x)\!) > 0$. We now classify which of these are automorphisms.

\begin{proposition}\label{autos}
    A continuous endomorphism $\varphi : A(\!(x)\!) \to A(\!(x)\!)$ is an automorphism if and only if $\nu(\varphi(x)\!) = 1$.
\end{proposition}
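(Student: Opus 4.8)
The plan is to prove both implications by reducing modulo $\mm$ along the quotient map $A(\!(x)\!) \twoheadrightarrow A(\!(x)\!)/\mm(\!(x)\!) = k(\!(x)\!)$. First recall the structural facts that make this work: by Lemma \ref{units} the non-units of $A(\!(x)\!)$ are exactly the series with all coefficients in $\mm$, so $A(\!(x)\!)$ is local with maximal ideal $\mm(\!(x)\!)$, and this ideal is nilpotent since $A$ is artinian (say $\mm^{N+1}=0$). A continuous $A$-algebra endomorphism $\varphi$ is then a local homomorphism and descends to a continuous $k$-algebra endomorphism $\bar\varphi$ of $k(\!(x)\!)$; moreover $\varphi(x)$ is a unit because ring maps preserve units, and a direct inspection of the definition of $\nu$ shows $\nu(\varphi(x)) = \mathrm{ord}_x(\bar\varphi(x))$, the usual order of vanishing of the reduction.

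For the forward implication, suppose $\varphi$ is an automorphism. Then $\varphi$, hence $\bar\varphi$, is surjective; being a nonzero endomorphism of the field $k(\!(x)\!)$ it is injective as well, so $\bar\varphi$ is an automorphism of $k(\!(x)\!)$. Set $m=\mathrm{ord}_x(\bar\varphi(x))$, which is $\geq 1$ by the cited Theorem~4.7 of \cite{LaurentHoms}, so $\bar\varphi(x)\in x\,k[\![x]\!]$. The image subfield $k(\!(\bar\varphi(x))\!)\subseteq k(\!(x)\!)$ consists of the series $f(\bar\varphi(x))$ with $f \in k(\!(t)\!)$, and $\mathrm{ord}_x f(\bar\varphi(x)) = m\cdot\mathrm{ord}(f)$, so the orders attained on the image are precisely the multiples of $m$. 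Since $x$ lies in the image and has order $1$, we conclude $m=1$, i.e.\ $\nu(\varphi(x))=1$.

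For the converse, suppose $\nu(\varphi(x))=1$, so $\bar\varphi(x)$ is a uniformizer of $k(\!(x)\!)$; it is classical that a continuous substitution sending $x$ to a uniformizer is an automorphism of $k(\!(x)\!)$ (the inverse is found by solving for $v$ in $v(\bar\varphi(x))=x$ term by term, the leading coefficient being a unit), so $\bar\varphi$ is an isomorphism. Now consider the finite filtration $A(\!(x)\!)\supseteq\mm(\!(x)\!)\supseteq\mm^2(\!(x)\!)\supseteq\cdots\supseteq\mm^{N+1}(\!(x)\!)=0$. Since $\varphi$ is an $A$-algebra map it preserves each $\mm^i(\!(x)\!)$, and because $\varphi$ fixes $A$ pointwise while acting as $\bar\varphi$ on the residue field $k(\!(x)\!)$, the induced map on the graded piece $\mm^i(\!(x)\!)/\mm^{i+1}(\!(x)\!) = (\mm^i/\mm^{i+1})\otimes_k k(\!(x)\!)$ is $\mathrm{id}_{\mm^i/\mm^{i+1}}\otimes_k\bar\varphi$, which is an isomorphism (as $\mm^i/\mm^{i+1}$ is a finite-dimensional $k$-space). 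An endomorphism of an object carrying a finite filtration that is an isomorphism on every graded piece is itself an isomorphism, by the short five lemma and induction on the length of the filtration; hence $\varphi$ is an isomorphism. Finally $\varphi^{-1}$ is continuous: it can be built by the same successive-approximation scheme along the filtration, starting from the continuous $\bar\varphi^{-1}$, or one observes that $\varphi$ carries each $\mm^i(\!(x)\!)$ onto itself.

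I expect the converse to be the main obstacle: the genuine content is bootstrapping from "$\bar\varphi$ is an isomorphism" back to "$\varphi$ is an isomorphism," which requires the bookkeeping with the $\mm$-adic filtration, the identification of the graded pieces as $\mathrm{id}\otimes\bar\varphi$, and the (routine but slightly fiddly) verification that the inverse is continuous. For the forward direction there is an alternative route closer in spirit to \cite{LaurentHoms}: establish that $\nu$ is multiplicative under composition, $\nu((\varphi\circ\psi)(x)) = \nu(\varphi(x))\cdot\nu(\psi(x))$, and combine this with $\nu(x)=1$ and $\nu(\varphi^{\pm 1}(x))\geq 1$; but the residue-field argument above sidesteps the delicate point there, namely that continuous endomorphisms carry $A(\!(x)\!)^{nil}$ into itself.
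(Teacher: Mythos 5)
Your proof is correct and rests on the same reduction-modulo-$\mm$ idea as the paper's, but it is substantially more complete. The forward implication (automorphism $\Rightarrow \nu=1$) agrees with the paper in spirit: the paper notes that if $\overline{\varphi(x)}\in x^2 k[\![x]\!]$ then $x$ is not in the image of $\bar\varphi$, while you make the same point quantitatively by observing that the orders attained on $\mathrm{im}\,\bar\varphi$ are exactly the multiples of $m=\mathrm{ord}_x\bar\varphi(x)$. Where you genuinely add content is the converse ($\nu=1\Rightarrow$ automorphism): the paper's displayed argument only establishes the forward direction and then asserts the biconditional, effectively deferring the converse to the cited Theorem~6.8 of \cite{LaurentHoms}. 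Your filtration bootstrap --- using that $A(\!(x)\!)$ is local artinian with maximal ideal $\mm(\!(x)\!)$, that $\varphi$ preserves each $\mm^i(\!(x)\!)$, and that the induced map on $\mathrm{gr}^i=(\mm^i/\mm^{i+1})\otimes_k k(\!(x)\!)$ is $\mathrm{id}\otimes\bar\varphi$, then applying the five lemma along the finite filtration --- is a clean, self-contained way to supply that missing half.

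One point you wave at deserves a sentence more care: the continuity of $\varphi^{-1}$. The topology here is $x$-adic, not $\mm$-adic, so the observation that $\varphi$ maps each $\mm^i(\!(x)\!)$ onto itself is about the wrong filtration. The cleaner route: $\varphi^{-1}$ is an $A$-algebra ring homomorphism (inverse of one), and since $\bar\varphi^{-1}(x)$ must have $x$-order $1$ (being the compositional inverse of a uniformizer), $\nu(\varphi^{-1}(x))=1>0$, which is exactly the continuity criterion the paper cites from \cite{LaurentHoms} Theorem~4.7. That closes the argument without appeal to successive approximation.
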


\begin{proof}
    This follows immediately from Theorem 6.8 (with $n = 1$) of \cite{LaurentHoms}. However, we provide a proof for this case here.

    As explained above, $\varphi$ must satisfy $\nu(\varphi(x)\!) > 0$. Let us examine the image of $\varphi(x)$ under the map $A(\!(x)\!) \to k(\!(x)\!)$ induced by $A \to A/\mm \simeq k$. Denote this image by $\overline{\varphi(x)}$. Since $\nu(\varphi(x)\!) > 0$, we see that $\overline{\varphi(x)} = \overline{a_1}x + \overline{a_2}x^2 + \cdots \in xk[\![x]\!]$. On the other hand, if $\overline{\varphi(x)} \in x^2k[\![x]\!]$, i.e., if $\overline{a_1} = 0$, then this will not be an invertible map, as it will not be surjective ($x$ will not be in the image). Thus, we see that $\varphi$ is an automorphism if and only if $\nu(\varphi(x)\!) = 1$.
\end{proof}

\subsection{Characteristic Zero Residue}

Assume that $k$ has characteristic zero. 

\begin{lemma}\label{quotient}
    We have an isomorphism
    \[
    A(\!(x)\!)\,dx/dA(\!(x)\!) \simeq A \, x^{-1}\,dx = A \, d\log x.
    \]
\end{lemma}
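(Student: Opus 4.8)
The plan is to identify the quotient with the target of the residue map and to produce an explicit $A$-linear splitting by ``integrating'' Laurent series term by term; such a splitting exists precisely because the characteristic-zero hypothesis makes every nonzero integer invertible in $A$.

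First I would record the arithmetic input. Since $k = A/\mm$ has characteristic zero, the image of any nonzero integer $n$ in $k$ is a unit; as $A$ is local, an element is a unit iff its residue in $k$ is a unit, so $n \cdot 1_A \in A^{*}$ for all $n \in \Z \setminus \{0\}$. In particular we may divide by $i+1$ in $A$ whenever $i \neq -1$.

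Next, consider the $A$-linear map $\res \colon A(\!(x)\!)\,dx \to A$ of Definition \ref{residue}, $\sum_i a_i x^i\,dx \mapsto a_{-1}$. I would first check $dA(\!(x)\!) \subseteq \ker(\res)$: for $g = \sum_i b_i x^i$ we have $dg = \sum_i i\, b_i\, x^{i-1}\,dx$, and the coefficient of $x^{-1}$ in this expression (the $i=0$ term) is $0 \cdot b_0 = 0$. For the reverse inclusion, take $\omega = \sum_i a_i x^i\,dx$ with $a_{-1} = 0$ and set $g = \sum_{i \neq -1} \tfrac{a_i}{i+1}\, x^{i+1}$. By the arithmetic input this is a well-defined element of $A(\!(x)\!)$: its coefficients differ from those of $\omega$ only by unit factors, so they satisfy the same finiteness and nilpotence conditions, and by construction $dg = \omega$. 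Hence $\ker(\res) = dA(\!(x)\!)$, and $\res$ induces an injection $A(\!(x)\!)\,dx / dA(\!(x)\!) \hookrightarrow A$.

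Finally I would check that the composite $A\,x^{-1}\,dx \hookrightarrow A(\!(x)\!)\,dx \twoheadrightarrow A(\!(x)\!)\,dx / dA(\!(x)\!)$ is an isomorphism of $A$-modules: it is injective since $a\,x^{-1}\,dx \in \ker(\res)$ forces $a = \res(a\,x^{-1}\,dx) = 0$, and it is surjective since for any $\omega$ the element $\omega - \res(\omega)\,x^{-1}\,dx$ has residue $0$ and therefore lies in $dA(\!(x)\!)$. Writing $x^{-1}\,dx = d\log x$ then yields the stated isomorphism $A(\!(x)\!)\,dx / dA(\!(x)\!) \simeq A\,x^{-1}\,dx = A\,d\log x$. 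The only genuine content is the integration step, and the only way it can fail is non-invertibility of some $i+1$ in $A$; the characteristic-zero hypothesis removes exactly that obstruction, which is also the reason the positive-characteristic case treated afterwards genuinely requires a different argument.
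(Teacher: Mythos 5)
Your argument is correct and follows essentially the same route as the paper: define the residue projection $\sum a_i x^i\,dx \mapsto a_{-1}$, show its kernel is exactly $dA(\!(x)\!)$ by integrating term-by-term (using that every nonzero integer is a unit in $A$ since $\operatorname{char} k = 0$ and $A$ is local), and conclude. You spell out a couple of steps the paper leaves implicit (the containment $dA(\!(x)\!) \subseteq \ker(\res)$, and well-definedness of the antiderivative in $A(\!(x)\!)$), but the underlying idea is identical.
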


\begin{proof}
    To see this, we define a surjective map $A(\!(x)\!)\,dx \to A \, x^{-1}\,dx$ by $\sum a_ix^i\,dx \mapsto a_{-1}x^{-1}\,dx$. We claim that the kernel is exactly $dA(\!(x)\!)$. Indeed, if we have an element $\sum a_ix^i\,dx \in A(\!(x)\!)\,dx$ with $a_{-1} = 0$, then this element is the image under $d$ of $\sum a_ix^{i+1}/(i+1)\,dx$.
\end{proof}

\begin{proposition}\label{char-zero}
    The isomorphism $A(\!(x)\!)\,dx/dA(\!(x)\!) \simeq A \, d\log x$ is invariant under a continuous automorphism of $A(\!(x)\!)$. 
\end{proposition}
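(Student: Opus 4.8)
The plan is to reduce the statement to a single residue computation and then to carry it out using the classification of continuous automorphisms from Proposition \ref{autos} together with the characteristic-zero logarithm.

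First I would observe that a continuous automorphism $\varphi$ of $A(\!(x)\!)$ induces an automorphism $\varphi^{*}$ of the differential module $A(\!(x)\!)\,dx$, characterized by $\varphi^{*}(f\,dg)=\varphi(f)\,d\varphi(g)$ and satisfying $\varphi^{*}\circ d=d\circ\varphi$. Hence $\varphi^{*}$ carries $dA(\!(x)\!)$ onto itself and descends to an $A$-linear automorphism $\overline{\varphi}^{*}$ of the quotient, which by Lemma \ref{quotient} is the free rank-one $A$-module $A\,d\log x$. Since $\varphi$ restricts to the identity on $A$, the map $\overline{\varphi}^{*}$ is $A$-linear, so to prove the proposition it suffices to show it fixes the generator $d\log x$. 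Tracing through the isomorphism of Lemma \ref{quotient} and the formula $\varphi^{*}(x^{-1}\,dx)=\varphi(x)^{-1}\,d\varphi(x)$, this is precisely the identity $\res\!\bigl(d\varphi(x)/\varphi(x)\bigr)=1$.

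Next, by Proposition \ref{autos} we have $\nu(\varphi(x))=1$, so Lemma \ref{units} lets us write $\varphi(x)=x\,w$ with $w\in A(\!(x)\!)^{nil}$. Then $d\varphi(x)/\varphi(x)=dx/x+dw/w$, and since $\res(dx/x)=1$ we are reduced to showing $\res(dw/w)=0$; in fact I will show the stronger statement $dw/w\in dA(\!(x)\!)$, which suffices by Lemma \ref{quotient}. Write $w=a_{0}(1+w')$, where $a_{0}\in A^{*}$ is the constant term of $w$; since $da_{0}=0$ we get $dw/w=dw'/(1+w')$. Now decompose $w'=p+q$ into its part $p\in x\,A[\![x]\!]$ of positive $x$-degree and its part $q$ of negative $x$-degree. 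Then $p$ is topologically nilpotent for the $x$-adic topology, while $q\in\mm\,A(\!(x)\!)$ is nilpotent because $\mm$ is nilpotent over the artinian ring $A$. Using that $k$, hence $A$, is a $\Q$-algebra, the series $\log(1+p)=\sum_{j\ge1}(-1)^{j-1}p^{j}/j$ converges $x$-adically in $A[\![x]\!]$, and $\log\!\bigl(1+(1+p)^{-1}q\bigr)$ is a finite sum since $(1+p)^{-1}q$ is nilpotent; setting $L=\log(1+p)+\log\!\bigl(1+(1+p)^{-1}q\bigr)\in A(\!(x)\!)$ and using $1+w'=(1+p)\bigl(1+(1+p)^{-1}q\bigr)$ together with the $x$-adic continuity of $d$, one verifies $dL=dw'/(1+w')=dw/w$. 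Hence $dw/w$ is exact, its residue vanishes, and the proof is complete.

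The main obstacle is exactly the construction of the logarithm $L$. Unlike the classical case over a field, $w$ may carry infinitely many negative-degree terms, so $\log(1+w')$ cannot be read off termwise; one has to split $w'$ into the part that is small in the $x$-adic topology — handled by $x$-adic convergence in $A[\![x]\!]$ — and the part that is small because it is nilpotent over the artinian base — handled by the fact that the logarithmic series then truncates. This is also the only place the characteristic-zero hypothesis enters, ensuring that the integers appearing as denominators in $\log(1+t)$ are invertible in $A$. Once $L$ is in hand, the identity $dL=dw'/(1+w')$ and the vanishing of the residue on exact forms (Lemma \ref{quotient}) are purely formal.
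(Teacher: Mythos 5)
Your proposal is correct and follows the same overall strategy as the paper: reduce to showing that $d\log u$ is exact for $u \in A(\!(x)\!)^{nil}$ and prove this by exhibiting a preimage of $d\log u$ under $d$ (i.e.\ constructing a logarithm of $u$). The one genuine difference lies in how the logarithm is produced. The paper decomposes $v = u - 1$ additively as $v^{\mathrm{nil}} + v'$ (nilpotent negative part plus positive part), substitutes $v$ directly into the formal series $F(x) = \log(1+x)$, and then verifies by a term-counting argument that for each fixed power of $x$ only finitely many $v^i$ contribute — so $F(v)$ is a well-defined Laurent series. You instead factor $1+w'$ multiplicatively as $(1+p)\bigl(1 + (1+p)^{-1}q\bigr)$ and take logarithms of each factor separately: $\log(1+p)$ converges because $p$ is $x$-adically small, and $\log\bigl(1+(1+p)^{-1}q\bigr)$ is a finite sum because $(1+p)^{-1}q$ is nilpotent. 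Your multiplicative splitting avoids the paper's somewhat fiddly bookkeeping about which monomials in $v^{\mathrm{nil}}$ and $v'$ can combine to hit a given $x^n$, at the price of a factorization lemma that is essentially trivial here; both arguments use the characteristic-zero hypothesis in exactly one place, to invert the integer denominators appearing in the logarithm series. Your framing up front — observing that $\varphi^*$ commutes with $d$, hence preserves $dA(\!(x)\!)$ and descends to an $A$-linear map on the rank-one quotient that need only be checked on the generator $d\log x$ — is also a slightly more systematic packaging of what the paper does implicitly.
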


\begin{proof}
    From Proposition \ref{autos}, any continuous automorphism must send $x$ to $xu$, where $u \in A(\!(x)\!)^{nil}$. We must show that $d\log(xu) \equiv d\log(x) \pmod{dA(\!(x)\!)}$. Since $d\log(xu) = d\log(x) + d\log(u)$, this amounts to showing that $d\log(u) \in dA(\!(x)\!)$. 

    First, up to scaling by an element of $A^*$, we can write $u = 1 + v$, where $v = \sum a_ix^i$ with $a_0 = 0$ and $a_n$ nilpotent for $n < 0$. Now let $F(x) = \sum_{i > 0} (-1)^{i+1}x^i/i$ be the power series of $\log(1+x)$. We claim that $F(v)$ is a well-defined formal Laurent series. To see this, note that we can write $v = v^{nil} + v'$, where $v^{nil} = \sum_{i < 0} a_ix^i$ (with each $a_i$ nilpotent) and $v' = \sum_{i > 0} a_ix^i$. Since $v^{nil}$ is nilpotent, say with $(v^{nil})^N = 0$, we see that for any $i > 0$, $v^i$ is a sum of $(v^{nil})^j(v')^k$ with $j+k = i$ and $i \geq k > i-N$. Thus for any $n$, there will only be finitely many $i$ such that $v^i$ contributes a nonzero coefficient of $x^n$. 

    Next, $d(F(x)\!) = \sum_{i > 0} (-1)^ix^i \, dx= (1+x)^{-1} \, dx$, and since $F(v)$ is defined, we have
    \[
    d(F(v)\!) = dF(v) \cdot dv = (1+v)^{-1}dv = d(1+v)/(1+v) = du/u = d\log(u).
    \]
    Therefore, $d\log(u)$ is the image of $F(v)$ under $d$, so $d\log(u) \in dA(\!(x)\!)$, as desired.
\end{proof}

\subsection{Positive Characteristic Residue}

Now assume $k$ has characteristic $p \neq 0$. 

One might be tempted to replicate the proof from characteristic zero, but an issue quickly arises: Lemma \ref{quotient} fails to be true when $k$ has positive characteristic. This is due to the fact that in characteristic $p$, $d(x^{p^n}) = p^n x^{p^n-1} \, dx = 0$ for all $n$. Thus, the quotient $A(\!(x)\!) \, dx/dA(\!(x)\!)$ will be too large, as it will contain terms of the form $x^p$, $x^{p^2}$, etc. 

However, we can still recover the idea from characteristic zero as follows. The goal is to quotient $A(\!(x)\!) \, dx$ and have a free rank $1$ $A$-module. Attempting to quotient by just $dA(\!(x)\!)$ fails, as just discussed, so we will try to quotient by something larger. Specifically, we saw that the terms of the form $x^{p^n}$ are what cause the issue with the quotient, so we will try to construct a submodule that contains these terms, leaving us with just $d\log x$ after quotienting.

For each $n$, we define a derivation
\begin{equation*}
p^{-n} d : A(\!(x^{p^n})\!) \to A(\!(x)\!) \: dx
\end{equation*}
by the following formula:
\begin{equation*}
p^{-n} d(x^{p^n}) = x^{p^n - 1} dx = x^{p^n} d \log x .
\end{equation*}
This extends uniquely to a continuous derivation. Equivalently, $p^{-n} d$ is the composition of the derivation
\begin{equation*}
d : A(\!(x^{p^n})\!) \to A(\!(x^{p^n})\!) d(x^{p^n})
\end{equation*}
that sends $x^{p^n}$ to $d (x^{p^n}) = x^{p^n} d \log(x^{p^n})$ with the homomorphism
\begin{equation*}
	A(\!(x^{p^n})\!) d \log (x^{p^n}) \to A(\!(x^{p^n})\!) d \log x \subset A(\!(x)\!) d \log x
\end{equation*}
that sends the basis element $d \log x^{p^n}$ to $d \log x$ (or, equivalently, sends $d(x^{p^n})$ to $x^{p^n - 1} dx$).

Our goal will now be to show that $\sum_n p^{-n}dA(\!(x^{p^n})\!)$ is the submodule that we should quotient by. Recall that a ring $A$ is \textit{$p$-torsion-free} if there is no element $a \in A$ such that $pa = 0$ (e.g, $\Z_p$).

\begin{lemma}\label{lem:2}
    Let $A$ be a $p$-torsion-free local ring with residue field of characteristic $p$.  Let $r \geq 0$ be an integer.  Let $u = \sum a_n x^n \in A(\!(x)\!)$ be a Laurent series with the property that $\ord_p(a_n) + \ord_p(n) \geq r$ for all $n$.  Then $u^p = \sum b_n x^n$ has the property that $\ord_p(b_n) + \ord_p(n) \geq r + 1$ for all $n$. 
\end{lemma}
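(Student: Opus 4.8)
The plan is to expand $u^p$ and organize the expansion by orbits of the cyclic shift action of $\Z/p$ on $p$-tuples of exponents. Writing $u = \sum_n a_n x^n$, we have
\[
u^p = \sum_{(n_1,\dots,n_p)} a_{n_1}\cdots a_{n_p}\, x^{n_1+\cdots+n_p},
\]
and since $u$ is a Laurent series its coefficients vanish for sufficiently negative index, so for each fixed exponent only finitely many tuples contribute. The group $\Z/p$ acts on the indexing tuples by cyclic shift; as $p$ is prime every orbit has size $1$ or $p$, the size-$1$ orbits being the diagonal tuples $(m,\dots,m)$ (contributing $a_m^p x^{pm}$) and every other orbit contributing $p$ times the orbit-constant value $a_{n_1}\cdots a_{n_p}\,x^{n_1+\cdots+n_p}$ of any representative. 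This yields a decomposition
\[
u^p \;=\; \sum_m a_m^p\, x^{pm} \;+\; p\,v, \qquad v = \sum_n c_n x^n \in A(\!(x)\!),
\]
where each $c_n \in A$ is the (finite) sum of the representative products $a_{n_1}\cdots a_{n_p}$ over the non-diagonal orbits with $n_1 + \cdots + n_p = n$. I will use only the elementary facts $\ord_p(ab) \ge \ord_p(a) + \ord_p(b)$, $\ord_p(a+b) \ge \min(\ord_p a, \ord_p b)$, and $\ord_p(p) \ge 1$ for the $p$-order on $A$ (the $p$-torsion-freeness together with the residue characteristic being $p$ actually forces $\ord_p(p)=1$, but only the inequality is needed).

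Next I would bound the two contributions to $b_n$ separately, noting from the decomposition that $b_n = a_m^p + p c_n$ when $n = pm$ and $b_n = p c_n$ otherwise. For the diagonal piece it suffices to show $p\,\ord_p(a_m) \ge r - \ord_p(m)$, since then $\ord_p(a_m^p) + \ord_p(pm) \ge p\,\ord_p(a_m) + 1 + \ord_p(m) \ge r+1$; this is trivial when $\ord_p(m) \ge r$, and otherwise follows from the hypothesis $\ord_p(a_m) \ge r - \ord_p(m) \ge 1$ (so $p\,\ord_p(a_m) \ge \ord_p(a_m) \ge r - \ord_p(m)$). For the $pv$ piece I need $\ord_p(c_n) + \ord_p(n) \ge r$; since $\ord_p(c_n)$ is at least the minimum over the finitely many summands and $\ord_p(a_{n_1}\cdots a_{n_p}) \ge \sum_i \ord_p(a_{n_i}) \ge \sum_i \max(0, r - \ord_p(n_i))$ by hypothesis, this reduces to the purely arithmetic statement
\[
\sum_{i=1}^p \max\bigl(0,\, r - \ord_p(n_i)\bigr) \;\ge\; r - \ord_p\Bigl(\,\sum_{i=1}^p n_i\,\Bigr)
\]
for integers $n_1,\dots,n_p$ not all zero. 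This inequality is the main (essentially only) point requiring thought: letting $j$ achieve $f := \min_i \ord_p(n_i)$, the ultrametric bound gives $\ord_p(\sum_i n_i) \ge f$, while the $j$-th summand on the left is $\max(0, r - f) \ge r - f \ge r - \ord_p(\sum_i n_i)$, and all summands are nonnegative.

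Finally I would assemble the pieces. For $n = pm$, $\ord_p(b_n) + \ord_p(n) \ge \min\bigl(\ord_p(a_m^p) + \ord_p(pm),\ \ord_p(p c_n) + \ord_p(n)\bigr) \ge r+1$, using $\ord_p(p c_n) \ge 1 + \ord_p(c_n)$; for $n$ not divisible by $p$, $\ord_p(b_n) + \ord_p(n) = \ord_p(p c_n) + \ord_p(n) \ge r+1$ the same way; and $n = 0$ is vacuous since $\ord_p(0) = \infty$. I expect the only delicate bookkeeping to be stating the $\Z/p$-orbit decomposition cleanly enough that "$c_n$ is a finite sum of representative products lying in $A$" is unambiguous; everything after that is routine manipulation of orders, and the sharpness of the bound (witnessed by $u = x^{p^r}$, giving exactly $r+1$) confirms there is no slack to exploit.
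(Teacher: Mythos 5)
Your proof is correct and follows essentially the same route as the paper's: both decompose $u^p$ via the cyclic $\Z/p$-action on $p$-tuples of exponents, treating the diagonal tuples $a_m^p x^{pm}$ and the size-$p$ orbits (which supply the extra factor of $p$) separately, and in each case reduce to elementary $\ord_p$ inequalities from the hypothesis. The only cosmetic difference is that the paper bounds a non-diagonal product $a_{i_1}\cdots a_{i_p}$ by singling out one factor $a_{i_j}$ with $\ord_p(i_j) \le \ord_p(n)$, while you instead sum $\max(0, r - \ord_p(n_i))$ over all factors before applying the ultrametric inequality --- the same observation in a different form.
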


\begin{proof}
	Suppose  that $p^k \mid n$ and $p^{k+1} \nmid n$. Then $b_n$ is a sum of terms
	\begin{equation} \label{eqn:1}
		a_{i_1} \cdots a_{i_p} x^{i_1 + \cdots + i_p}
	\end{equation}
	where $i_1 + \cdots + i_p = n$.  Since $i_1 + \cdots + i_p$ is not a multiple of $p^{k+1}$, at least one of the $i_j$ must not be a multiple of $p^{k+1}$.  Therefore $\ord_p(a_{i_j}) \geq r - k$ so $\ord_p(a_{i_1} \cdots a_{i_p}) + \ord_p(n) \geq r - k + k = r$.  But if $i_1, \ldots, i_p$ are not all the same then the term~\eqref{eqn:1} appears a multiple of $p$ times in $u^p$, so~\eqref{eqn:1} and all of its permutations contribute a term $c x^n$ to $u^p$ with $\ord_p(c) \geq r + 1$.

	On the other hand, if $i_1 = \cdots = i_p = i$ then $n = ip$.  Since $p^{k+1}$ does not divide $n$, this means that $p^k$ does not divide $i$ (and $p^{k-1}$ does).  The contribution of~\eqref{eqn:1} to $u^p$ is $a_i^p x^n$ in this case.  But $\ord_p(a_i) + \ord_p(i) \geq r$, so we certainly have $\ord_p(a_i^p) + \ord_p(ip) \geq r + 1$.
\end{proof}

\begin{lemma}\label{lem:3}
    Let $A$ be a $p$-torsion-free local ring. The $A$-submodule
\begin{equation*}
	\sum p^{-n} d A(\!(x^{p^n})\!) \subset A(\!(x)\!) d \log(x)
\end{equation*}
is invariant under continuous automorphisms of $A(\!(x)\!)$.
\end{lemma}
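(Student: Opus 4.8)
The plan is to recast invariance as a statement about $p$-th powers governed by Lemma \ref{lem:2}. Since $\varphi$ and $\varphi^{-1}$ are both continuous automorphisms, it suffices to show $\varphi(M) \subseteq M$ with $M := \sum_n p^{-n} d A(\!(x^{p^n})\!)$. As $A$ is $p$-torsion-free, $A(\!(x)\!) \hookrightarrow A[1/p](\!(x)\!)$, and over $A[1/p]$ a coefficient comparison shows that the formal operator $p^{-n} d$ on $A(\!(x^{p^n})\!)$ equals $\tfrac1{p^n}\circ d$. A continuous $A$-algebra automorphism commutes with $d$, so, setting $z := \varphi(x)$ (a unit of $A(\!(x)\!)$, since $x$ is and $\varphi$ is a ring automorphism) and $A(\!(z^{p^n})\!) := \varphi(A(\!(x^{p^n})\!))$ (the closed subring generated by $z^{\pm p^n}$), we get $\varphi(p^{-n} d A(\!(x^{p^n})\!)) = \{\tfrac1{p^n} d(g) : g \in A(\!(z^{p^n})\!)\}$, these elements lying in $A(\!(x)\!) d\log x$ as they are $\varphi$-images of elements of $M$. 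Thus it is enough to prove $\tfrac1{p^n} d(g) \in M$ for all $n \ge 0$ and all $g \in A(\!(z^{p^n})\!)$.

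Introduce $N_n := \{\sum_j a_j x^j \in A(\!(x)\!) : \ord_p(a_j) + \ord_p(j) \ge n \text{ for all } j\}$. We will use two elementary facts: $N_n$ is a closed $A$-subalgebra of $A(\!(x)\!)$, and $N_n = \sum_{s=0}^n p^{n-s} A(\!(x^{p^s})\!)$ --- for the latter, sort each monomial $a_j x^j$ of an element of $N_n$ into the summand indexed by $s = \min(\ord_p(j), n)$, where the valuation hypothesis forces $p^{n-s} \mid a_j$. The heart of the argument is that $A(\!(z^{p^n})\!) \subseteq N_n$: writing $z = x w$ with $w, w^{-1} \in A(\!(x)\!) = N_0$, iterating Lemma \ref{lem:2} gives $w^{\pm p^n} \in N_n$; since also $x^{\pm p^n} \in N_n$ and $N_n$ is a ring, $z^{\pm p^n} \in N_n$, and because $N_n$ is closed it contains the whole subring $A(\!(z^{p^n})\!)$ these generate.

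Now take $g \in A(\!(z^{p^n})\!) \subseteq N_n = \sum_{s=0}^n p^{n-s} A(\!(x^{p^s})\!)$ and write $g = \sum_{s=0}^n p^{n-s} \zeta_s$ with $\zeta_s \in A(\!(x^{p^s})\!)$; then $\tfrac1{p^n} d(g) = \sum_{s=0}^n \tfrac1{p^s} d(\zeta_s) = \sum_{s=0}^n p^{-s} d(\zeta_s) \in \sum_{s=0}^n p^{-s} d A(\!(x^{p^s})\!) \subseteq M$, again using that $\tfrac1{p^s}\circ d$ restricted to $A(\!(x^{p^s})\!)$ is the operator $p^{-s} d$. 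This yields $\varphi(M) \subseteq M$, hence $\varphi(M) = M$. The main obstacle is the reformulation in the second paragraph --- recognizing $N_n$ as the right invariant and, crucially, invoking Lemma \ref{lem:2} to see that the $N_n$-condition survives taking $p$-th powers; after that the proof is routine coefficient bookkeeping together with the harmless passage to $A[1/p]$ permitted by $p$-torsion-freeness, though one should take care to verify the claimed identifications of the operators $p^{-n} d$ with division by $p^n$ after $d$.
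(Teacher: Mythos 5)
Your proof is correct and takes essentially the same route as the paper's: the crux in both is Lemma~\ref{lem:2}, which propagates the condition $\ord_p(a_j)+\ord_p(j)\geq n$ through $p$-th powers. You make this more systematic by packaging the condition into the closed subring $N_n$, proving the identity $N_n = \sum_{s=0}^n p^{n-s}A(\!(x^{p^s})\!)$ that the paper uses tacitly, and handling the full ring $A(\!(z^{p^n})\!)$ rather than only its generator $z^{p^n} = (xu)^{p^n}$ — a detail the paper's proof leaves implicit.
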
 

\begin{proof}
	From Proposition \ref{autos}, a continuous automorphism must send $x$ to $xu$, where $u \in A(\!(x)\!)^{nil}$.  We therefore have to show that $p^{-m} d(x^{p^m} u^{p^m})$ lies in $\sum p^{-n} d A(\!(x^{p^n})\!)$ for all $m$.  We write $xu = \sum a_i x^i$ and compute $(xu)^{p^m}$ formally.  We write $(xu)^{p^m} = \sum b_i x^i$ so that
	\begin{equation*}
		p^{-m} d(x^{p^m} u^{p^m}) = p^{-m} d \sum b_i x^i .
	\end{equation*}
	In order for this sum to lie in $\sum p^{-n} d A(\!(x^{p^n})\!)$, we have to verify that $\ord_p(b_i) + \ord_p(i) \geq m$ for all $i$. This follows immediately from an inductive application of Lemma~\ref{lem:2}.
\end{proof}

It follows from Lemma~\ref{lem:3} that the quotient
\begin{equation*}
	A(\!(x)\!) \, dx \Big/ \sum p^{-n} d A(\!(x^{p^n})\!)
\end{equation*}
is an invariant of the ring $A(\!(x)\!)$ and does not depend on the choice of topological generator $x$.

\begin{lemma}
    The quotient
\begin{equation*}
	A(\!(x)\!) \: dx \Big/ \bigl( d A(\!(x)\!) + p^{-1} d A(\!(x^p)\!) +  p^{-2} d A(\!(x^{p^2})\!) + \cdots \bigr)
\end{equation*}
is a free $A$-module of rank~$1$ with basis $d \log x$.
\end{lemma}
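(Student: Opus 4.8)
The plan is to show that the submodule $N := dA(\!(x)\!) + \sum_{n\geq 1} p^{-n}dA(\!(x^{p^n})\!)$ in the denominator is exactly the kernel of the residue functional $\rho : A(\!(x)\!)\,dx \to A$, $\sum a_i x^i\,dx \mapsto a_{-1}$, from Definition \ref{residue}. Since $\rho$ is $A$-linear and $\rho(d\log x) = \rho(x^{-1}\,dx) = 1$, it is surjective; so once $\ker\rho = N$ is established, $\rho$ descends to an isomorphism $A(\!(x)\!)\,dx / N \xrightarrow{\sim} A$ carrying the class of $d\log x$ to $1$, which is precisely the assertion.

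The inclusion $N \subseteq \ker\rho$ is immediate: for each $n \geq 0$ the module $p^{-n}dA(\!(x^{p^n})\!)$ is topologically $A$-spanned by the elements $p^{-n}d(x^{jp^n}) = j\,x^{jp^n-1}\,dx$ with $j \in \Z$, and the $x^{-1}\,dx$-coefficient of such an element is nonzero only when $jp^n = 0$, i.e. $j = 0$, in which case it vanishes. Hence every element of $N$ has vanishing residue.

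For the reverse inclusion, given $\omega = \sum_i a_i x^i\,dx$ with $a_{-1} = 0$, I would sort the terms by $k := \ord_p(i+1)$. The $k = 0$ terms assemble into $d\bigl(\sum_{\ord_p(i+1)=0}\tfrac{a_i}{i+1}\,x^{i+1}\bigr) \in dA(\!(x)\!)$, using that an integer prime to $p$ is a unit of $A$ (since $A$ is local with residue characteristic $p$) and that the exponents stay bounded below, so the primitive is still a Laurent series. For $k \geq 1$, writing $i+1 = mp^k$ with $p \nmid m$, put $\eta_k := \sum_{\ord_p(i+1)=k}\tfrac{a_i}{m}\,x^{mp^k} \in A(\!(x^{p^k})\!)$; the defining formula $p^{-k}d(x^{mp^k}) = m\,x^{mp^k-1}\,dx$ gives $p^{-k}d\eta_k = \sum_{\ord_p(i+1)=k}a_i x^i\,dx$. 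Summing over $k$ yields $\omega = \sum_{k\geq 0}p^{-k}d\eta_k$, so $\omega \in N$.

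The step I expect to need the most care is the convergence of this last sum, which is genuinely infinite for a general $\omega$: already $\sum_{n\geq 1}x^{p^n-1}\,dx$ cannot be written using boundedly many of the slots $p^{-k}dA(\!(x^{p^k})\!)$, since the $x^{p^N-1}\,dx$-coefficient obtainable from slots $0,\dots,K$ always lies in $p^{N-K}A$, a proper ideal for $N > K$. What saves the argument is that once $p^k$ exceeds the $x$-order of $\omega$, the summand $p^{-k}d\eta_k$ is supported in degrees $\geq p^k - 1$, so the partial sums converge $x$-adically to $\omega$; thus one should read the denominator $N$ as the closed submodule topologically generated by the $p^{-n}dA(\!(x^{p^n})\!)$ (equivalently, work in the completion of $A(\!(x)\!)\,dx$), and with this reading $\ker\rho = N$. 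The induced isomorphism $A(\!(x)\!)\,dx/N \cong A$ with $[d\log x] \mapsto 1$ then gives the lemma.
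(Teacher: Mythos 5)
Your argument takes essentially the paper's route: decomposing $\omega \in \ker\rho$ by $k=\ord_p(i+1)$ is the paper's observation that $x^m\,d\log x$ lies in the image of $p^{-\ord_p m}d$, after re-indexing $m=i+1$, and the easy inclusion $N\subseteq\ker\rho$ is the paper's remark that $A\,d\log x$ meets none of the images. Packaging the statement as ``the denominator equals $\ker\rho$'' is a slightly cleaner way to say the same thing.

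The subtlety you raise at the end, however, is not just a matter of care in your write-up: it is a genuine imprecision in the lemma as stated, and you have the right fix. Your counterexample is correct. Any element of the literal finite sum $dA(\!(x)\!)+\sum_{n=1}^{K}p^{-n}dA(\!(x^{p^n})\!)$ has its $x^{p^N-1}\,dx$-coefficient lying in $p^{N-K}A$, which is a proper ideal whenever $N>K$ because $p\in\mathfrak m$; so $\sum_{n\geq 1}x^{p^n-1}\,dx$ lies in $\ker\rho$ but not in the algebraic sum, and the quotient by the algebraic sum is not free of rank one with basis $d\log x$ (were it so, the induced surjection onto $A\cong A(\!(x)\!)\,dx/\ker\rho$ would be an isomorphism, forcing the algebraic sum to equal $\ker\rho$). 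The paper is implicitly working with the $x$-adic closure of the sum throughout this subsection: its proof of the present lemma invokes a ``topological basis'' and thereby establishes only density, and the proof of Lemma~\ref{lem:1} exhibits $d\log u$ as an explicitly infinite sum over the slots. Under the closure reading your proof is complete: $N\subseteq\ker\rho$ because each $p^{-n}d(x^{jp^n})=jx^{jp^n-1}\,dx$ misses $x^{-1}\,dx$, and your $\eta_k$ give partial sums converging $x$-adically to $\omega$ since, once $p^k$ exceeds the pole order, $p^{-k}d\eta_k$ is supported in degrees $\geq p^k-1$. It would be worth stating the closure convention explicitly and noting that Lemma~\ref{lem:3} then applies to the closure as well, since a continuous automorphism of $A(\!(x)\!)$ is a homeomorphism.
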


\begin{proof}
$A(\!(x)\!)$ has a topological basis consisting of all $x^m d \log x$.  If $m \neq 0$, we can write $m = p^n q$ with $q$ relatively prime to $p$.  Then 
	\begin{equation*}
		x^m d \log x = x^{p^n q} \frac{dx}{x} = x^{p^n q - 1} dx = p^{-n} d (q^{-1} (x^{p^n})^q) .
	\end{equation*}
	Therefore $x^m d \log x$ is in the image of $p^{-n} d$, so the quotient is spanned by $d \log x$.

On the other hand, it is clear that $A d \log x$ does not meet the image of $p^{-n} d$ for any $n$.
\end{proof}

\begin{lemma}\label{lem:1}
Let $u \in A(\!(x)\!)^{nil}$.  Then $d \log u$ lies in
\begin{equation*}
	d A(\!(x)\!) + p^{-1} d A(\!(x^p)\!) + p^{-2} d A(\!(x^{p^2})\!) + \cdots \subset A(\!(x)\!) dx .
\end{equation*}
\end{lemma}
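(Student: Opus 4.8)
The plan is to reduce the assertion to the vanishing of a single residue, and then to exhibit $d\log u$ as an exact differential after inverting $p$.

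\smallskip
\noindent\textbf{Step 1 (reduce to a residue).} By the preceding lemma — more precisely, by the computation in its proof that every $x^{j}\,dx$ with $j\neq -1$ already lies in $dA(\!(x)\!)+p^{-1}dA(\!(x^{p})\!)+p^{-2}dA(\!(x^{p^{2}})\!)+\cdots$ — this submodule is exactly the kernel of the residue map $A(\!(x)\!)\,dx\to A$, $\sum a_{i}x^{i}\,dx\mapsto a_{-1}$ (the map to the quotient is $\res$ followed by $a_{-1}\mapsto a_{-1}\,d\log x$). So it is enough to prove $\res(d\log u)=0$.

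\smallskip
\noindent\textbf{Step 2 (invert $p$).} Put $A'=A[1/p]$. Since $A$ is $p$-torsion-free the map $A\hookrightarrow A'$ is injective, and since the residue field has characteristic $p$ every integer coprime to $p$ is already a unit of $A$; hence $A'$ is a $\Q$-algebra. The inclusion $A(\!(x)\!)\,dx\hookrightarrow A'(\!(x)\!)\,dx$ commutes with extracting the coefficient of $x^{-1}$, so it suffices to show $\res(d\log u)=0$ over $A'$.

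\smallskip
\noindent\textbf{Step 3 (exactness over $A'$).} Write $u=a_{0}(1+\tilde v)$ with $a_{0}\in A^{\ast}$ the constant term and $\tilde v=\sum_{i\neq 0}(a_{i}/a_{0})x^{i}$, so $d\log u=d\log(1+\tilde v)$. The negative part $\tilde v_{-}=\sum_{i<0}(a_{i}/a_{0})x^{i}$ has finitely many terms, all with nilpotent coefficients; finitely many nilpotents generate a nilpotent ideal, so $\tilde v_{-}$ is nilpotent in $A'(\!(x)\!)$. Together with $\tilde v_{+}=\sum_{i>0}(a_{i}/a_{0})x^{i}\in xA'[\![x]\!]$, a degree count shows that in $\sum_{k\geq 1}\tfrac{(-1)^{k+1}}{k}\tilde v^{k}$ only finitely many $k$ affect any given coefficient of $x$, so $L:=\log(1+\tilde v)$ is a well-defined element of $A'(\!(x)\!)$ — here it is crucial that $A'$ is a $\Q$-algebra, so that the factors $1/k$ make sense. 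Differentiating term by term yields $dL=\tfrac{\tilde v'}{1+\tilde v}\,dx=d\log u$. Finally $\res(dh)=0$ for every $h\in A'(\!(x)\!)$, since the coefficient of $x^{-1}$ in $d(\sum c_{i}x^{i})=\sum i c_{i}x^{i-1}\,dx$ is $0\cdot c_{0}$. Thus $\res(d\log u)=\res(dL)=0$, which by Step 1 gives $d\log u\in dA(\!(x)\!)+p^{-1}dA(\!(x^{p})\!)+p^{-2}dA(\!(x^{p^{2}})\!)+\cdots$.

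\smallskip
\noindent The step I expect to be the main obstacle is the convergence of $L=\log(1+\tilde v)$ in $A'(\!(x)\!)$: one must verify that the nilpotence of the finitely many negative coefficients of $u$ really does make the partial sums $\sum_{k\leq K}\tfrac{(-1)^{k+1}}{k}\tilde v^{k}$ stabilize in each $x$-degree. Writing $\tilde v^{k}=\sum_{j}\binom{k}{j}\tilde v_{-}^{\,j}\tilde v_{+}^{\,k-j}$ and using $\tilde v_{-}^{\,N}=0$ for the nilpotency index $N$, the surviving terms ($j<N$) have lowest $x$-degree at least $(k-j)-cj\geq k-cN$ for a constant $c$ depending only on $\tilde v_{-}$, which tends to $+\infty$. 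The remaining ingredients — identifying the submodule with $\ker(\res)$, the injectivity $A\hookrightarrow A'$, and termwise differentiation of a coefficientwise-convergent series — are routine.
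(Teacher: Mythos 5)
Your Step~1 (identifying the submodule $dA(\!(x)\!)+\sum_{n\geq 1}p^{-n}dA(\!(x^{p^n})\!)$ with $\ker(\res)$) is correct, and your convergence argument for $\log(1+\tilde v)$ is fine once one is working over a $\Q$-algebra. The gap is in Step~2, where you write ``Since $A$ is $p$-torsion-free.'' The lemma makes no such hypothesis, and for the rings to which this appendix must apply --- local artinian rings with residue field of characteristic $p$ --- the element $p$ lies in the (nilpotent) maximal ideal, so $A$ always has $p$-torsion and $A[1/p]=0$. Consequently the map $A\hookrightarrow A'$ is not injective, Step~3 proves a statement about the zero ring, and nothing transfers back to $A$. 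Your argument as written only establishes the lemma under a $p$-torsion-free hypothesis that never holds for the intended $A$.

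The paper handles this by a universalization: it specializes a computation over a $p$-torsion-free universal ring $\mathbf Z_{(p)}[[b_1,b_2,\ldots]]$ down to $A$. That reduction cannot be applied directly to your $u$, because the negative-index coefficients of $u$ are required to be nilpotent, and a $p$-torsion-free domain like $\mathbf Z_{(p)}[[b_1,b_2,\ldots]]$ has no nonzero nilpotents. The paper therefore first shows (by a Gaussian-elimination argument that is genuinely nontrivial) that, after multiplying by a unit of $A$, one can factor $u=(1+v)(1+w)$ with $v\in x^{-1}A[x^{-1}]$ having nilpotent coefficients and $w\in xA[\![x]\!]$; the $(1+w)$ factor has no nilpotence constraint and so lifts to the universal $p$-torsion-free ring, while the $(1+v)$ factor is reduced to the $(1+w)$ case by $x\mapsto x^{-1}$. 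That factorization and the accompanying universalization are the missing ingredients in your proof. (Once in the $p$-torsion-free setting, your observation that $d\log u=dL$ with $L\in A'(\!(x)\!)$ and hence $\res(d\log u)=0$ is essentially the same bookkeeping as the paper's regrouping of $\log u=\sum (c_n/n)x^n$ by $\ord_p(n)$, so the endgame is fine.)
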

\begin{proof}
	After multiplication by a unit, we can write $u = 1 + v + w$, where $v \in x^{-1} A[x^{-1}]$ has nilpotent coefficients and $w \in x A[\![x]\!]$.  We write $v = \sum_{i=1}^N a_i x^{-i}$ and $w = \sum b_i x^i$.

	We argue that there is a power series $g \in 1 + x A[\![x]\!]$ such that $gu = 1 + x^{-1} f$ where $f \in A[x^{-1}]$ has nilpotent coefficients.  Writing 
	\begin{equation*}
		g = 1 + c_1 x + c_2 x^2 + \cdots
	\end{equation*}
	for indeterminates $c_1, c_2, \ldots$, we obtain the following equations determined by considering the coefficients of $x, x^2, x^3, \ldots$ in $gu$:
	\begin{equation*}
		\begin{split}
			c_1 + a_1 c_2 + a_2 c_3 + \cdots + a_N c_{N+1} & = -b_1 \\
			b_1 c_1 + c_2 + a_1 c_3 + \cdots + a_N c_{N+2} & = -b_2 \\
			b_2 c_1 + b_1 c_2 + c_3 + a_1 c_4 + \cdots + a_N c_{N+3} & = -b_3 \\
			\vdots \hskip2cm 
		\end{split}
	\end{equation*}
	Here are the same equations in matrix form:
	\begin{equation*}
		\begin{pmatrix}
			1 & a_1 & a_2 & a_3 & \cdots & a_N & 0 & 0 & \cdots\\
			b_1 & 1 & a_1 & a_2 & \cdots & a_{N-1} & a_N & 0 &  \cdots \\
			b_2 & b_1 & 1 & a_1 & \cdots & a_{N-2} & a_{N-1} & a_N & \cdots \\
			b_3 & b_2 & b_1 & 1 & \cdots & a_{N-3} & a_{N-2} & a_{N-1} & \cdots \\
			\vdots & \vdots & \vdots & \vdots & \ddots & \vdots & \vdots & \vdots & \ddots 
		\end{pmatrix}
		\begin{pmatrix}
			c_1 \\ c_2 \\ c_3 \\ c_4 \\ \vdots
		\end{pmatrix}
		= - \begin{pmatrix}
			b_1 \\ b_2 \\ b_3 \\ b_4 \\ \vdots
		\end{pmatrix}
	\end{equation*}
	Applying Gaussian elimination to the coefficient matrix converts it into an upper triangular matrix.  Since all of the $a_i$ are nilpotent, Gaussian elimination only adds nilpotent elements to the diagonal entries, and therefore produces an upper triangular matrix whose diagonal entries are units and whose above-diagonal entries are nilpotent.

	Let $J$ be the ideal of $A$ generated by $a_1, \ldots, a_N$.  Since $J$ is generated by finitely many nilpotent elements, it is nilpotent as an ideal.  After the first stage of Gaussian elimination, our matrix has the following form:
	\begin{equation*}
		\begin{pmatrix}
			1 & J & J & \cdots & J & 0 & 0 & 0 & \cdots \\
			0 & 1 & J & \cdots & J & J & 0 & 0 & \cdots \\
			0 & 0 & 1 & \cdots & J & J & J & 0 & \cdots \\
			\vdots & \vdots & \vdots & \ddots & \vdots & \vdots & \vdots & \vdots & \ddots
		\end{pmatrix}
	\end{equation*}
	After eliminating the entries just above the diagonal, it will have this form:
	\begin{equation*}
		\begin{pmatrix}
			1 & 0 & J & \cdots & J & J^2 & 0 & 0 & \cdots \\
			0 & 1 & 0 & \cdots & J & J & J^2 & 0 & \cdots \\
			0 & 0 & 1 & \cdots & J & J & J & J^2 & \cdots \\
			\vdots & \vdots & \vdots & \ddots & \vdots & \vdots & \vdots & \vdots & \ddots

		\end{pmatrix}
	\end{equation*}
	Each round of elimination will replace a diagonal with entries in $J^m$ with a higher diagonal having entries in $J^{m+1}$.  Since $J$ is nilpotent, this process will eventually terminate and reduce the matrix to the identity.

	Now since $g$ is a unit of $A(\!(x)\!)$, we can now write $u$ as $(1 + v)(1 + w)$ where $v \in x^{-1} A[x^{-1}]$ and has nilpotent coefficients and $w \in x A[\![x]\!]$.  Since $d\log(u) = d\log(1+v) + d\log(1+w)$, we can therefore treat the two cases $u = 1 + v$ and $u = 1 + w$ separately.

	We consider the case where $u \in 1 + x A[\![x]\!]$ first.  Replacing $A$ by $\mathbf Z_{(p)}[[b_1, b_2, \ldots]]$, it is sufficient to assume that $A$ is $p$-torsion-free.
%
%
	Since $d \log u = du / u = \sum c_n x^n d \log(x)$ has a power series expansion with coefficients in $A$, it is possible to write a power series $\log u$ with coefficients in $A[p^{-1}]$,
	\begin{equation*}
		\log u = \sum \frac{c_n}{n} x^n ,
	\end{equation*}
	where all $c_n$ are in $A$.  We group the terms by the power of $p$ appearing in $n$, so that
	\begin{equation*}
		\log u = \sum_{\ord_p n = 0} \frac{c_n}{n} x^n + p^{-1} \sum_{\ord_p n = 1} \frac{c_n}{n/p} (x^p)^{n/p} + p^{-2} \sum_{\ord_p n = 2} \frac{c_n}{n/p^2} (x^{p^2})^{n/p^2} + \cdots .
	\end{equation*}
	The first sum lies in $A(\!(x)\!)$, the second in $A(\!(x^p)\!)$, the third in $A(\!(x^{p^2})\!)$, etc.  Applying $d$, therefore,
	\begin{multline*}
		d \log u = d \sum_{\ord_p n = 0} \frac{c_n}{n} x^n + p^{-1} d \sum_{\ord_p n = 1} \frac{c_n}{n/p} (x^p)^{n/p} + p^{-2} d \sum_{\ord_p n = 2} \frac{c_n}{n/p^2} (x^{p^2})^{n/p^2} + \cdots  \\
		\in d A(\!(x)\!) + p^{-1} d A(\!(x^p)\!) + p^{-2} d A(\!(x^{p^2})\!) + \cdots
	\end{multline*}

	Next we consider $u = 1 + v$ where $v \in x^{-1} A[x^{-1}]$ has nilpotent coefficients.  But $u(x^{-1}) \in 1 + x A[x]$, so by the above, 
	\begin{equation*}
		d\log u(x^{-1}) \in d A[x] + p^{-1} d A[x^p] + p^{-2} d A[x^{p^2}] + \cdots
	\end{equation*}
	Applying the homomorphism $x \mapsto x^{-1}$ from $A[x]$ to $A(\!(x)\!)$, we conclude that
	\begin{equation*}
		d \log u \in d A(\!(x)\!) + p^{-1} d A(\!(x^{p})\!) + p^{-2} d A(\!(x^{p^2})\!) + \cdots
	\end{equation*}
	as required.
\end{proof}

\begin{proposition}
    The isomorphism
    \[
    A(\!(x)\!) \: dx \Big/ \bigl( d A(\!(x)\!) + p^{-1} d A(\!(x^p)\!) +  p^{-2} d A(\!(x^{p^2})\!) + \cdots \bigr) \simeq A d\log x
    \]
    is invariant under any continuous automorphism of $A(\!(x)\!)$.
\end{proposition}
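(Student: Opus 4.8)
The plan is to assemble the two substantive lemmas of this subsection --- Lemma \ref{lem:3} and Lemma \ref{lem:1} --- together with Proposition \ref{autos}. Fix a continuous automorphism $\varphi$ of $A(\!(x)\!)$. "Invariance of the isomorphism" means precisely that the generalized residue map $\rho : A(\!(x)\!)\,dx \to A$, sending $\omega$ to the unique $a \in A$ with $\omega \equiv a\,d\log x$ modulo $N := \sum_n p^{-n} d A(\!(x^{p^n})\!)$, satisfies $\rho(\varphi_*\omega) = \rho(\omega)$, where $\varphi_*$ is the induced map on differentials. So the first step is to record that $\varphi$ extends to an $A$-linear automorphism $\varphi_*$ of $A(\!(x)\!)\,dx$ via $\varphi_*(f\,dg) = \varphi(f)\,d(\varphi(g))$, and that by construction $\varphi_* \circ d = d \circ \varphi$. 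This is the only point that needs (routine) checking, using that $A(\!(x)\!)\,dx$ is the module of continuous differentials with $d\log x$ a topological basis and that $\varphi$ fixes $A$.

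Next I would invoke Proposition \ref{autos}: since $\varphi$ is an automorphism, $\varphi(x) = xu$ with $u \in A(\!(x)\!)^{nil}$. By Lemma \ref{lem:3}, the submodule $N$ is carried into itself by $\varphi_*$, and onto itself since $\varphi_*$ is invertible, so $\varphi_*$ descends to an automorphism $\bar\varphi$ of the quotient $Q := A(\!(x)\!)\,dx / N$. By the (unlabeled) lemma stating that $Q$ is a free $A$-module of rank one with basis the class of $d\log x$, it suffices to compute the action of $\varphi_*$ on this generator. We have $\varphi_*(d\log x) = d(\varphi(x))/\varphi(x) = d(xu)/(xu) = d\log x + d\log u$, and by Lemma \ref{lem:1}, $d\log u \in N$; hence $\bar\varphi$ fixes the class of $d\log x$ in $Q$.

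Finally, since $\bar\varphi$ is $A$-linear (because $\varphi$ fixes $A$) and fixes the basis element, $\bar\varphi = \id_Q$. Unwinding this: writing an arbitrary $\omega \in A(\!(x)\!)\,dx$ as $\omega = a\,d\log x + \eta$ with $a \in A$ and $\eta \in N$, we get $\varphi_*\omega = a\,\varphi_*(d\log x) + \varphi_*\eta = a\,d\log x + (a\,d\log u + \varphi_*\eta)$, and both $d\log u$ and $\varphi_*\eta$ lie in $N$, so $\rho(\varphi_*\omega) = a = \rho(\omega)$, which is the asserted invariance. I do not expect a genuine obstacle: all the real work is in Lemmas \ref{lem:3} and \ref{lem:1}, and the present proposition is their formal consequence --- the one thing worth stating with care is the compatibility of $\varphi$ with the differential $d$ and with the $A$-module structure, which is what licenses both passing to the quotient and reading off the residue coefficient.
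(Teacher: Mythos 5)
Your proof is correct and follows essentially the same route as the paper: reduce via Proposition \ref{autos} to $\varphi(x) = xu$ with $u \in A(\!(x)\!)^{nil}$, and then conclude from $d\log(xu) = d\log x + d\log u$ together with Lemma \ref{lem:1}. The one difference is organizational: you explicitly cite Lemma \ref{lem:3} inside the proof to justify that $\varphi_*$ descends to the quotient $Q$, whereas the paper makes that observation in the remark immediately following Lemma \ref{lem:3} and states the proposition's proof using only Lemma \ref{lem:1}; your version is the more self-contained of the two.
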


\begin{proof}
	As in the proof of Proposition \ref{char-zero}, assume that the automorphism sends $x$ to $xu$ with $u \in A(\!(x)\!)^{nil}$. Then we must show that  $d \log(xu) \equiv d\log x$ modulo $ d A(\!(x)\!) + p^{-1} d A(\!(x^p)\!) +  p^{-2} d A(\!(x^{p^2})\!) + \cdots$, and since $d\log(xu) = d\log x + d\log u$, we must show that $d\log u \in d A(\!(x)\!) + p^{-1} d A(\!(x^p)\!) +  p^{-2} d A(\!(x^{p^2})\!) + \cdots$. This is exactly the statement of Lemma \ref{lem:1}.
\end{proof}

\bibliographystyle{amsalpha}
\bibliography{references}

\end{document}